\theoremstyle{plain}
\newtheorem{theorem}{Theorem}[section]
\newtheorem{lemma}[theorem]{Lemma}
\newtheorem{remark}[theorem]{Remark}
\newtheorem{definition}[theorem]{Definition}
\theoremstyle{definition}
\newtheorem{example}{Example}
\newcommand{\bydef}{\stackrel{\textnormal{\tiny def}}{=}}
\begin{document}
\title{
Numerical computation of transverse homoclinic orbits \\
for periodic solutions of delay differential equations
}
\author{
Olivier H\'{e}not
\thanks
{McGill University, Department of Mathematics and Statistics, 805 Sherbrooke Street West, Montreal, QC, H3A 0B9, Canada. {\tt olivier.henot@mail.mcgill.ca}.}
\and Jean-Philippe Lessard
\thanks
{McGill University, Department of Mathematics and Statistics, 805 Sherbrooke Street West, Montreal, QC, H3A 0B9, Canada. {\tt jp.lessard@mcgill.ca}.}
\and Jason D. Mireles James
\thanks
{Florida Atlantic University, Department of Mathematical Sciences, Science Building, Room 234, 777 Glades Road, Boca Raton, Florida, 33431, USA. {\tt jmirelesjames@fau.edu}.}
}

\date{}

\maketitle

% \tableofcontents

\begin{abstract}
We present a computational method for studying transverse homoclinic orbits for periodic solutions of delay differential equations, a phenomenon that we refer to as the \emph{Poincar\'{e} scenario}.
The strategy is geometric in nature, and consists of viewing the connection as the zero of a nonlinear map, such that the invertibility of its Fr\'{e}chet derivative implies the transversality of the intersection. The map is defined by a projected boundary value problem (BVP), with boundary conditions in the (finite dimensional) unstable and (infinite dimensional) stable manifolds of the periodic orbit.
The parameterization method is used to compute the unstable manifold and the BVP is solved using a discrete time dynamical system approach (defined via the \emph{method of steps}) and Chebyshev series expansions.
We illustrate this technique by computing transverse homoclinic orbits in the cubic Ikeda and Mackey-Glass systems.
\end{abstract}

\begin{center}
{\bf \small Key words.}
{\small Delay differential equations, Homoclinic tangle, Transverse homoclinic orbits, Periodic orbits, Smale's horseshoe, Symbolic dynamics, Poincar\'{e} scenario}
\end{center}

%%%%%%%%%%%%%%%%%%
%% INTRODUCTION %%
%%%%%%%%%%%%%%%%%%

\section{Introduction}\label{sec:introduction}
%!TEX root = poincare_scenario_dde.tex

A delay differential equation (DDE) relates the rate of change of a function with its state at present and past times. 
They are used, for example, to model networks with communication lags between subsystems, and particle systems where disturbances propagate with finite speed.
The delay gives a DDE a kind of \emph{memory}, and leads to the notion of an infinite dimensional dynamical system.
Thanks to this high dimensionality, even a scalar DDE can exhibit diverse and complex dynamics.
We refer the interested reader to the books \cite{MR1345150,Hale1977,MR1243878} 
on the subject of DDEs.

A notable example is the delayed-feedback model for the concentration of blood cells introduced in 1977 by Mackey and Glass \cite{Mackey1977}
\begin{equation}\label{eq:mg}
\frac{d}{dt} w(t) = - a w(t) + b \frac{w(t - \tau)}{1 + w(t - \tau)^\rho}, \qquad t \ge 0.
\end{equation}
Here $\tau > 0$ is the constant delay and $a, b, \rho \in \mathbb{R}$ are physiological parameters.
The authors introduce the notion of \emph{dynamical disease}, where pathological behaviors are produced by control systems after variation of the physiological parameters.
Since qualitative changes in the dynamics characterize the onset of symptoms, this notion ties dynamical bifurcation theory to disease pathology.
More sophisticated models of hematopoiesis, extending this concept, are found in the works of \cite{Mackey1987,PujoMenjouet2016,Souza2019}.

\begin{figure}
\centering
\begin{subfigure}[b]{0.41\textwidth}
\includegraphics[width=\textwidth]{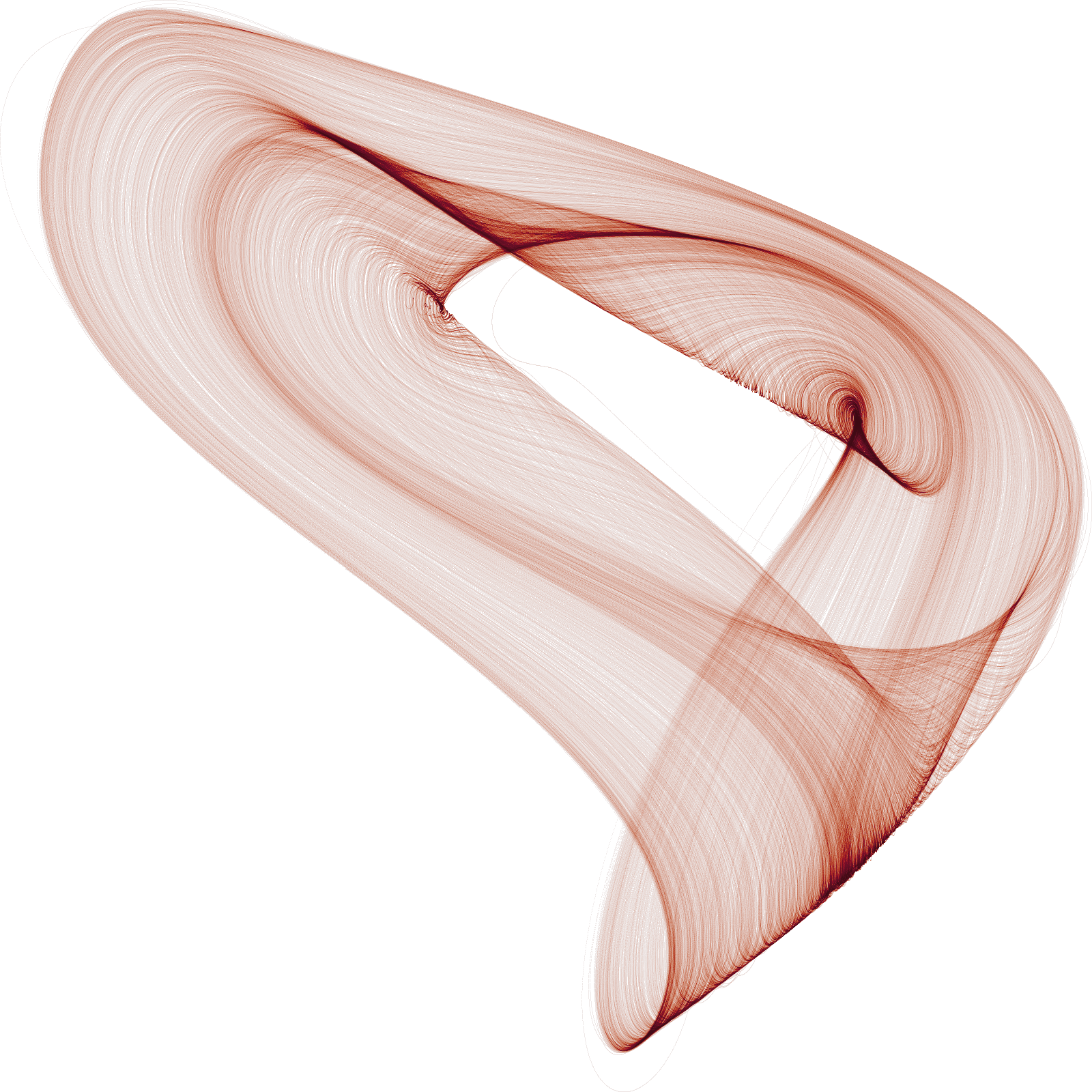}
\caption{Mackey-Glass equation.}
\label{fig:mg_attractor}
\end{subfigure}
\hspace{0.06\textwidth}
\begin{subfigure}[b]{0.41\textwidth}
\includegraphics[width=\textwidth]{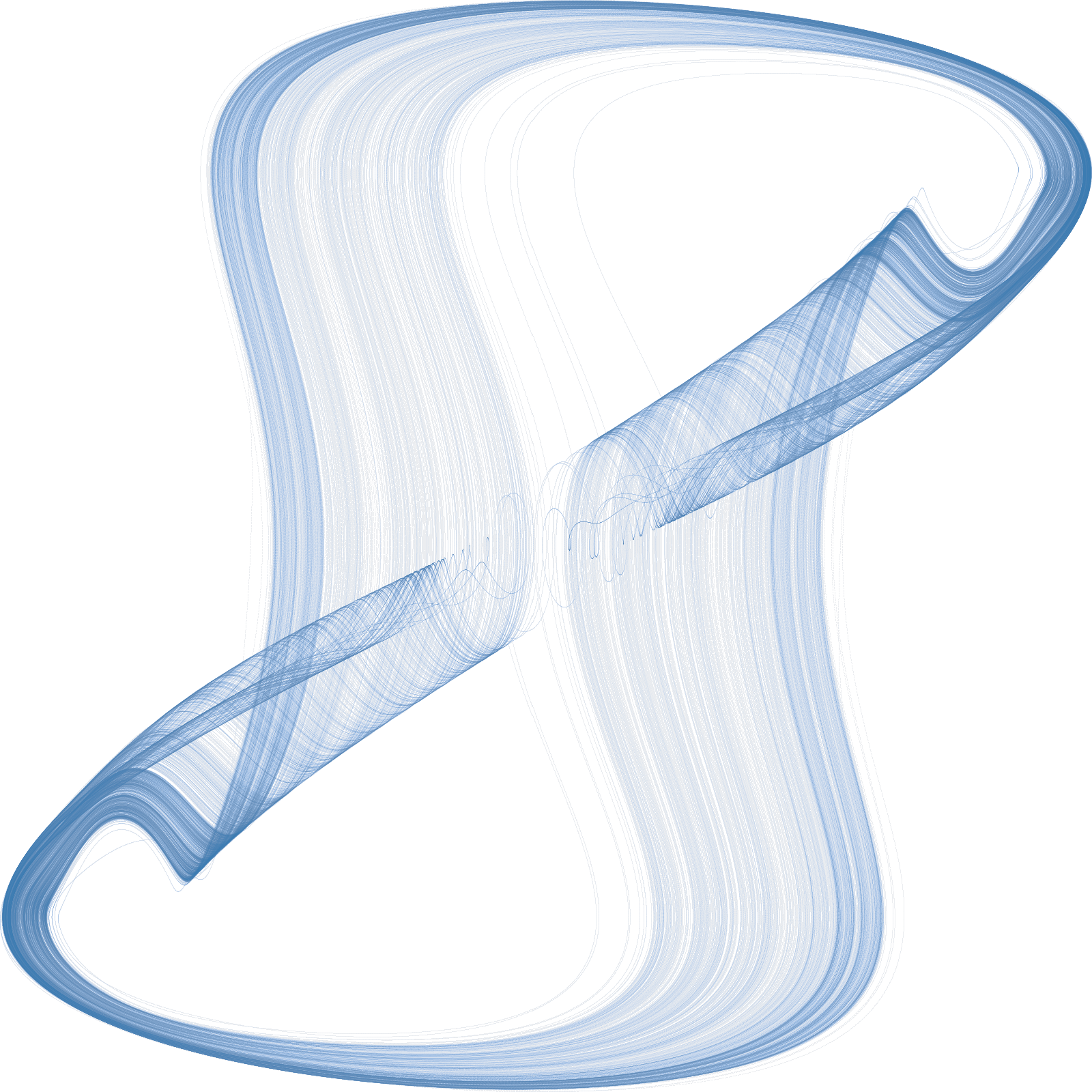}
\caption{Cubic Ikeda equation.}
\label{fig:cubic_ikeda_attractor}
\end{subfigure}
\caption{Numerical simulations providing evidence for the existence of chaotic attractors in some simple DDEs.}
\label{fig:attractors}
\end{figure}

In addition to its impact on pathology, Equation \eqref{eq:mg}, nowadays known as the \emph{Mackey-Glass equation}, is famous for its rich dynamics.
Indeed, the Mackey-Glass equation has become a flagship example of chaos in infinite dimensions.
In the original article \cite{Mackey1977}, Mackey and Glass numerically followed a sequence of period doubling bifurcations by increasing the delay $\tau$, which led to the onset of chaotic behavior.
Figure \ref{fig:mg_attractor} shows numerical simulation results for Equation \eqref{eq:mg} which suggest the existence of a chaotic attractor.
Further investigations were conducted by Farmer, Mensour and Longtin to compute the Lyapunov exponents, Lyapunov dimension and power spectra \cite{Farmer1982, Mensour1998}.
The interested reader is referred to \cite{Hale1988, Ikeda1987, Junges2012, Lepri1994, PujoMenjouet2016} and references therein for more information. 

In the present work we consider a geometric mechanism which 
gives rise to chaotic behavior.
The picture goes back to Poincar\'{e}'s groundbreaking work on the three-body problem, where he showed that homoclinic orbits associated with periodic solutions lead to extremely rich dynamics \cite{Poincare}.
More precisely, when the unstable manifold \emph{bends back} and returns to a neighborhood of the periodic orbit, it can intersect the stable manifold transversely; a phenomenon we refer to as the \emph{Poincar\'{e} scenario}.
Poincar\'{e} famously complained that the resulting picture
was \emph{difficult to draw}.
In modern language, the transverse intersection implies the existence of chaotic motions (symbolic dynamics) via Smale's Tangle Theorem \cite{MR228014}.
While a number of authors have shown existence of chaotic dynamics for DDEs (e.g. see \cite{Hale1986,Hale1988,MR1615995,MR1827805,MR1329849,MR1397673,MR623379} and the references therein), a rigorous proof of chaos in the Mackey-Glass equation remains an important conjecture in the field (see \cite{hans-otto-MG} for a more thorough discussion of this conjecture).

The aim of this article is to present a numerical method for studying the Poincar\'{e} scenario (transverse homoclinic orbits) in DDEs of the form
\begin{equation}\label{eq:dde_elem}
\frac{d}{dt} w(t) = g ( w(t), w_t(-\tau) ), \qquad t \ge 0,
\end{equation}
where $\tau > 0$ is the delay, $w_t (s) \bydef w(t + s)$, for all $s \in [-\tau, 0]$, and $g : \mathbb{R} \times \mathbb{R} \to \mathbb{R}$ is assumed to only be comprised of elementary nonlinearities (i.e. exponential, logarithmic, algebraic functions and compositions thereof).
Our strategy consists of rephrasing the intersection of the stable and unstable manifolds as an isolated zero of a nonlinear map.
This choice is motivated by current techniques for computer-assisted proofs, where zeros of infinite dimensional maps are proven to exist via contraction mapping arguments applied to appropriate fixed-point operators.
The reader will recognize here a Newton-Kantorovich type argument, and may refer to some related works \cite{Llave2016,Henot2021,Lessard2020,Lessard2021,Berg2022}.

An important feature of the proposed framework is that we do not exploit any numerical integration schemes for advecting the flow generated by the DDE.
Instead, we are careful to express the problem in a form so that, after truncation, we are left to solve large systems of polynomial equations; in particular, we formulate the \emph{method of steps} (e.g. see \cite{Diekmann1995}) as a $C^1$ Chebyshev integrator (see \cite{Lessard2021}) which amounts to solving polynomial equations.
While we do not give any computer-assisted proofs in the present work, the article has the ulterior motive of paving the way for future mathematically rigorous studies of chaos for DDEs.
In particular, we are convinced that the present research provides a sufficient framework for proving the existence of symbolic dynamics in the Mackey-Glass equation.

\subsection{Framework}

We begin by noting that for any DDE \eqref{eq:dde_elem} there exists an auxiliary 
polynomial DDE of the form
\begin{equation}\label{eq:dde}
\frac{d}{dt} u(t) = f(u(t), u_t(-\tau)), \qquad t \ge 0,
\end{equation}
where $\tau > 0$ is the delay, $u_t (s) \bydef u(t + s)$, for all $s \in [-\tau, 0]$, and $f : \mathbb{R}^n \times \mathbb{R}^n \to \mathbb{R}^n$ is a polynomial.
Indeed, if the right-hand-side of the DDE \eqref{eq:dde_elem} is polynomial, then the DDEs \eqref{eq:dde_elem} and \eqref{eq:dde} are identical with $u = w$, $f = g$ and $n = 1$.
Otherwise, we introduce new coordinates $v_t = ((v_1)_t, \dots, (v_d)_t)$ in place of an appropriate set of elementary nonlinear functions $\varphi( w_t )$ (cf. Theorem 2.2 in \cite{Henot2021}).
Then, $u = (w, v)$ satisfies a DDE of the form \eqref{eq:dde} with $n = 1 + d$ and
\begin{equation}\label{eq:f_poly}
f(u(t), u_t(\tau))
=
\begin{pmatrix}
g^{(1)} (u(t), u_t(-\tau)) \\
g^{(2)} (u(t), u_t(-\tau))
\end{pmatrix},
\end{equation}
where $g^{(1)}, g^{(2)}$ are polynomials, with range in $\mathbb{R}, \mathbb{R}^d$ respectively, such that
\[
g^{(1)} (u(t), u_t(-\tau)) = g ( w(t), w_t(-\tau) ) \qquad \text{and} \qquad
g^{(2)} (u(t), u_t(-\tau)) = [D\varphi(w_t) \frac{d}{dt} w_t](0),
\]
whenever $v_t = \varphi( w_t )$. In fact, for initial conditions satisfying $v_0 = \varphi( w_0 )$, it follows that $w(t)$ is a solution of the DDE \eqref{eq:dde_elem}.

The idea of enlarging the dimension of the original system to a larger polynomial system is variously referred to as \emph{automatic differentiation}, \emph{polynomial embedding}, or \emph{quadratic recast} (e.g. see \cite{Bucker2006,MR2531684,MR3973675,Henot2021,Jorba2005,Knuth1981,Lessard2016}).

\begin{remark}
The reader may be surprised by our insistence on working with a polynomial DDE \eqref{eq:dde}. 
Certainly, for numerical computations, which is the scope of this article, this may seem like an awkward annoyance as it is entirely possible to directly use the DDE \eqref{eq:dde_elem} with elementary nonlinearities; the results of this article will follow straightforwardly, albeit handling Chebyshev and Taylor expansions of elementary nonlinearities.
On the other hand, generating the polynomial DDE \eqref{eq:dde} is easily done (e.g. see \cite{Henot2021}) and is not an innocent decision.
From a numerical perspective, multiplication is a natural operation for Chebyshev series expansions: their interpretation as cosine series 
facilitates multiplication via discrete convolutions.
From a theoretical perspective, the Banach algebra structure enjoyed by Chebyshev series is most easily exploited in computer-assisted proofs when the nonlinearities are polynomial.
The reader will see in our strategy a flexible numerical technique for which computer-assisted proofs techniques can be applied as easily as possible.
\end{remark}

\begin{example}[Cubic Ikeda equation]
The Ikeda equation
\[
\frac{d}{dt} w(t) = \sin (w(t - \tau))
\]
was introduced in \cite{Ikeda1987}.
This simple DDE also displays a chaotic attractor and is often found as a \emph{sister} equation to the Mackey-Glass equation \eqref{eq:mg} in the literature.
In the present article, we will not consider the full sine nonlinearity.
Indeed, it has been thoroughly explored and we rather emphasize the Mackey-Glass equation.
In \cite{Sprott2007}, Sprott discusses how a low-order rescaled Taylor expansion of the sine nonlinearity can be considered while still retaining complex dynamics; the resulting DDE is the so-called \emph{cubic Ikeda equation} given by
\begin{equation}\label{eq:cubic_ikeda}
\frac{d}{dt} w(t) = w_t (-\tau) - w_t (-\tau)^3.
\end{equation}
We will use this cubic scalar DDE as an illustrative and intuitive template to guide the reader through the forthcoming complex notions of the article; its (numerically) chaotic attractor is shown on Figure \ref{fig:cubic_ikeda_attractor}.
Then, $u = w$ satisfies the DDE of the form \eqref{eq:dde} with $n = 1$ and
\begin{equation}\label{eq:f_cubic_ikeda}
f(u(t), u_t(-\tau)) = u_t (-\tau) - u_t (-\tau)^3.
\end{equation}
\end{example}

\begin{example}[Mackey-Glass equation]
Consider the Mackey-Glass equation \eqref{eq:mg}.
We define $\varphi_1 (w_t) \bydef w_t(1 + w_t^\rho)^{-1}$, $\varphi_2(w_t) \bydef w_t^{\rho-2}$ and $\varphi_3(w_t) \bydef w_t^{-1}$. Then, $u = (w, v_1, v_2, v_3)$ satisfies the DDE of the form \eqref{eq:dde} with $n = 1 + d = 4$ and
\begin{equation}\label{eq:f_mg}
f(u(t), u_t(\tau))
=
\begin{pmatrix}
\hspace{3.85cm} - \hspace{0.075cm} a w(t) + b \cdot (v_1)_t(-\tau)  \\
v_1(t)(v_3(t) - \rho v_1(t) v_2(t)) \big(- a w(t) + b \cdot (v_1)_t(-\tau)\big) \\
\hspace{1.3cm} (\rho-2) v_2(t) v_3(t) \big(- a w(t) + b \cdot (v_1)_t(-\tau)\big) \\
\hspace{2.65cm} - v_3(t)^2 \big(- a w(t) + b \cdot (v_1)_t(-\tau)\big)
\end{pmatrix}.
\end{equation}
A similar polynomial system for the Mackey-Glass equation was first presented in \cite{Berg2022} where the authors prove the existence of periodic orbits in DDEs. Their method is based on Fourier series expansions for which the resulting Banach algebra structure is, again, most easily exploited with polynomial nonlinearities.
Incidentally, note that, in the process, the equilibrium $0$ of the Mackey-Glass equation \eqref{eq:mg} has become a singular point. This should bear no impact in the present context since the numerically observed chaotic dynamics remain bounded away from $0$.
\end{example}

Now, since $f$ is locally Lipschitz, there exists $t_* > 0$ such that, for all $t \in [0,  t_*)$, the solution operator of the DDE \eqref{eq:dde} is a strongly continuous semi-flow $S_t : C([-\tau,0],\mathbb{R}^n) \to C([-\tau,0],\mathbb{R}^n)$ defined by
\[
[S_t (\phi)] (s) \bydef
\begin{cases}
\displaystyle \phi(0) + \int_{0}^{t + s} f([S_{t'}(\phi)](0), [S_{t'}(\phi)](-\tau)) \, dt', & t + s > 0, \\
\phi(t + s), & t + s \le 0,
\end{cases}
\]
for all $s \in [-\tau,0]$.
Following the strategy presented in \cite{Lessard2021}, the solution operator induces a discrete dynamical system (DDS) by considering the \emph{time-$\tau$ map} representing the forward integration of fixed step-size $\tau$, namely
\begin{equation}\label{eq:dds}
\begin{cases}
\phi \mapsto \mathcal{F} (\phi), \\
\phi \in \mathcal{C}^n,
\end{cases}
\end{equation}
where $\mathcal{C}^n \bydef \big\{ \phi \in C([-\tau,0],\mathbb{R}^n) \, : \, S_t(\phi) \textnormal{ exists for all } t \in [0,  \tau] \big\}$ and
\begin{equation}\label{eq:def_F}
[\mathcal{F} (\phi)] (s)
\bydef [S_{ \tau} (\phi)](s)
= \phi(0) + \int_{-\tau}^s f( [\mathcal{F} ( \phi )] (s'), \phi(s') ) \, ds'.
\end{equation}
Hence, $\mathcal{F} ( \phi )$, for a given $\phi \in \mathcal{C}^n$, is implicitly defined as the unique solution of
\[
u (s) = [\mathcal{T}( u, \phi )](s), \qquad \text{for all } s \in [-\tau,0],
\]
where
\begin{equation}\label{eq:time_stepping_map}
[\mathcal{T}( u, \phi )](s) \bydef \phi(0) + \int_{-\tau}^s f( u(s'), \phi(s') ) \, ds', \qquad \text{for all } s \in [-\tau,0].
\end{equation}

The DDS \eqref{eq:dds} yields a discretization of the DDE \eqref{eq:dde} and corresponds to the formalism behind the numerical scheme to solve DDEs known as the \emph{method of steps} (e.g. see \cite{Diekmann1995}).
The following lemma summarizes the correspondence between the solutions of the DDE \eqref{eq:dde} and the solutions of the DDS \eqref{eq:dds}.

\begin{lemma}\label{lem:equivalence_dde_dds}
Let $\tau > 0$, $m \in \mathbb{N}$, and $\phi \in C([-\tau, 0], \mathbb{R}^n)$. The following statements are equivalent:
\begin{enumerate}
\item $t \in [0, m\tau] \mapsto S_t (\phi)$ is a solution of the DDE \eqref{eq:dde}.
\item $j \in \{0, \dots, m-1\} \mapsto \mathcal{F}^j (\phi)$ is a solution of the DDS \eqref{eq:dds}.
\end{enumerate}
\end{lemma}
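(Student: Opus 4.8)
The plan is to prove Lemma \ref{lem:equivalence_dde_dds} by induction on $m$, reducing everything to two elementary facts: that $\mathcal{F}$ coincides with the time-$\tau$ map $S_\tau$ on its natural domain $\mathcal{C}^n$ (this is how $\mathcal{F}$ is defined in \eqref{eq:def_F}), and that the semi-flow enjoys the concatenation (semi-group) property $S_{t+t'}(\phi) = S_t(S_{t'}(\phi))$ wherever the quantities involved are defined. First I would restate the two assertions in a form amenable to induction. Statement (1) says precisely that $S_t(\phi)$ is defined for every $t \in [0, m\tau]$: once the orbit exists on that interval, the integral representation of $S_t$ shows that $t \mapsto [S_t(\phi)](0)$ is $C^1$ on $(0,m\tau]$ and solves \eqref{eq:dde}, so ``being a solution'' carries no content beyond existence. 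Statement (2) says that the finite sequence $\phi, \mathcal{F}(\phi), \dots, \mathcal{F}^{m-1}(\phi)$ is well-defined and that each of its terms belongs to $\mathcal{C}^n$, i.e. each iterate still admits a further time-$\tau$ step.

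The inductive engine is the following claim, proved from the concatenation property: for $0 \le j \le m-1$, the semi-flow $S_t(\phi)$ exists for all $t \in [0, (j+1)\tau]$ if and only if it exists for all $t \in [0, j\tau]$ \emph{and} $S_{j\tau}(\phi) \in \mathcal{C}^n$; moreover, in that case $S_{(j+1)\tau}(\phi) = S_\tau\big(S_{j\tau}(\phi)\big) = \mathcal{F}\big(S_{j\tau}(\phi)\big)$. Indeed, for $t \in [0,\tau]$ the concatenation property gives $S_{j\tau+t}(\phi) = S_t\big(S_{j\tau}(\phi)\big)$, so existence of the orbit on $[j\tau,(j+1)\tau]$ is equivalent to existence of $S_t(\psi)$ for all $t \in [0,\tau]$ with $\psi \bydef S_{j\tau}(\phi)$, which is exactly the condition $\psi \in \mathcal{C}^n$; the final identity is then just the definition of $\mathcal{F}$ applied to $\psi$. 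Feeding this into an induction on $j$, started from $S_0(\phi) = \phi = \mathcal{F}^0(\phi)$, one obtains simultaneously that $S_t(\phi)$ exists on $[0,m\tau]$, that $\mathcal{F}^j(\phi)$ is well-defined and lies in $\mathcal{C}^n$ for $0 \le j \le m-1$, and that $\mathcal{F}^j(\phi) = S_{j\tau}(\phi)$ throughout. Reading the induction forwards yields $(1)\Rightarrow(2)$; reading it with the clause ``$S_{j\tau}(\phi)\in\mathcal{C}^n$'' supplied by the hypothesis of (2) yields $(2)\Rightarrow(1)$. The base case $m=1$ reduces, after unwinding the definition of $\mathcal{C}^n$, to the tautology $\phi \in \mathcal{C}^n \iff \phi \in \mathcal{C}^n$, and $m=0$ is vacuous.

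The one genuinely technical point — and the step I expect to be the main obstacle — is the careful justification of the concatenation property $S_{t+t'}(\phi) = S_t(S_{t'}(\phi))$ in the precise form used above, including the bookkeeping of existence intervals (half-open versus closed) and the fact that existence of the longer orbit forces existence of the shorter ones and of the intermediate sub-flows. This is standard DDE theory and follows from uniqueness of solutions of the defining integral equation, itself a consequence of the local Lipschitz continuity of $f$ via a Gr\"onwall/contraction estimate; I would either cite it from \cite{Hale1977,MR1243878} or sketch it by splitting the desired identity according to the two branches of the piecewise definition of $S_t$ (the cases $t+t'+s>0$ and $t+t'+s\le 0$, further subdivided by the sign of $t+s$) and invoking uniqueness on the overlap. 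Everything else is routine unwinding of the definitions \eqref{eq:def_F} and \eqref{eq:time_stepping_map}.
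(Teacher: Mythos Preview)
Your proposal is correct and follows essentially the same approach as the paper: the paper's proof is a two-line argument that uses the semi-group property to write $S_{j\tau}(\phi) = S_\tau(S_{(j-1)\tau}(\phi)) = \mathcal{F}(S_{(j-1)\tau}(\phi)) = \ldots = \mathcal{F}^j(\phi)$ and observes that existence of the orbit on $[0,m\tau]$ is equivalent to $S_{j\tau}(\phi) \in \mathcal{C}^n$ for $j=0,\dots,m-1$. Your induction merely makes this chain explicit and is, if anything, more careful about the existence-interval bookkeeping than the paper itself.
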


\begin{proof}
By construction, the existence of a solution $t \in [0, m\tau] \mapsto S_t ( \phi )$ of the DDE \eqref{eq:dde} is equivalent to $\phi = S_0 ( \phi ), S_\tau( \phi ), \dots, S_{(m-1)\tau} ( \phi ) \in \mathcal{C}^n$ such that
\[
S_{j\tau} ( \phi ) = S_\tau ( S_{(j-1)\tau} ( \phi ) ) = \mathcal{F} ( S_{(j-1)\tau} ( \phi ) ) = \ldots = \mathcal{F}^j (\phi), \qquad j = 0, \dots, m-1.
\]
In other words, $j \in \{0, \dots, m-1\} \mapsto \mathcal{F}^j (\phi)$ is a solution of the DDS \eqref{eq:dds}.
\end{proof}

The essence of the method presented in this article is to pursue the transverse homoclinic orbit of a $m\tau$-periodic solution of the DDE \eqref{eq:dde_elem} with the DDS \eqref{eq:dds}, whose dynamics are given by a compact operator: the time-$\tau$ map $\mathcal{F}$.

\subsection{Structure of the article}

In Section \ref{sec:periodic_orbit}, we present a zero-finding problem to compute periodic orbits of the DDE \eqref{eq:dde_elem}. In Section \ref{sec:eigendecomposition}, we investigate the spectrum and eigenvectors. Then, in Section \ref{sec:unstable_manifold}, we present the computation of the unstable manifold. In Section \ref{sec:homoclinic_orbit}, we combine all the ingredients to formulate a BVP as a zero-finding problem yielding a transverse connecting orbit; the scheme guarantees the transversality through the invertibility of the derivative of the map.
We also illustrate the strategy in each section with the cubic Ikeda equation.
Lastly, in Section \ref{sec:mackey_glass}, we apply our method to compute a transverse homoclinic orbit for the Mackey-Glass equation.

\medskip

The code implementing the method presented in this article can be found at \cite{Henot2023}.
The code relies on \emph{RadiiPolynomial} \cite{Henot2021-2}, a library -- written in Julia \cite{Bezanson2017} -- for computer-assisted proofs in dynamical systems.
We make no attempts to perform rigorous numerics in this article, yet the library provides useful resources to easily implement the method presented in this article.
Lastly, to visualize the data we use \emph{Makie} \cite{Danisch2021}.

%%%%%%%%%%%%%%%%%%%%
%% PERIODIC ORBIT %%
%%%%%%%%%%%%%%%%%%%%

\section{Computation of the periodic orbit}\label{sec:periodic_orbit}
%!TEX root = poincare_scenario_dde.tex

Let $m \in \mathbb{N}$. A $m\tau$-periodic orbit of \eqref{eq:dde} corresponds to a $m$-periodic orbit of \eqref{eq:dds}; that is, a fixed-point of the mapping $\phi \mapsto \mathcal{F}^m (\phi)$ ($\mathcal{F}$ composed with itself $m$ times).
Since working directly with $m$ compositions of $\mathcal{F}$ is laborious, we prefer \emph{unrolling} $\mathcal{F}^m$ at the cost of working with a DDS comprised of more equations.

Thus, we consider the following multiple shooting scheme for the DDS
\begin{equation}\label{eq:unrolled_dds}
\begin{cases}
\displaystyle
\phi \mapsto \mathring{\mathcal{F}}( \phi )
\bydef
\begin{pmatrix}
\mathcal{F}( \phi_m )\\
\mathcal{F}( \phi_1 )\\
\vdots\\
\mathcal{F}( \phi_{m-1} )
\end{pmatrix},\\
\phi \in (\mathcal{C}^n)^m,
\end{cases}
\end{equation}
such that $\mathring{\mathcal{F}}( \phi )$, for a given $\phi \in (\mathcal{C}^n)^m$, is the unique solution of
\begin{equation}\label{eq:Fcirc_identity}
[\mathring{\mathcal{F}}( \phi )] (s) = [\mathring{\mathcal{T}}( \mathring{\mathcal{F}}( \phi ), \phi )](s), \qquad \text{for all } s \in [-\tau,0],
\end{equation}
where
\begin{equation}
\mathring{\mathcal{T}}( u, \phi )
\bydef
\begin{pmatrix}
\mathcal{T}( u_1, \phi_m ) \\
\mathcal{T}( u_2, \phi_1 ) \\
\vdots \\
\mathcal{T}( u_m, \phi_{m-1} )
\end{pmatrix}.
\end{equation}

The following lemma summarizes the correspondence between periodic orbits of the DDE \eqref{eq:dde}, periodic orbits of the DDS \eqref{eq:dds} and fixed-points of the DDS \eqref{eq:unrolled_dds}.

\begin{lemma}\label{lem:equivalence_po_fp}
Let $\tau > 0$, $m \in \mathbb{N}$ and $\phi \in C([-\tau, 0], \mathbb{R}^n)$. The following statements are equivalent:
\begin{enumerate}
\item $t \in \mathbb{R}/m\tau\mathbb{Z} \mapsto S_t (\phi)$ is an $m\tau$-periodic orbit of the DDE \eqref{eq:dde}.
\item $j \in \mathbb{Z}/m\mathbb{Z} \mapsto \mathcal{F}^j(\phi)$ is an $m$-periodic orbit of the DDS \eqref{eq:dds}.
\item $(\phi, \mathcal{F}(\phi), \dots, \mathcal{F}^{m-1}(\phi))$ is a fixed-point of the DDS \eqref{eq:unrolled_dds}.
\end{enumerate}
\end{lemma}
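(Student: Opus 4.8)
The plan is to establish the chain of equivalences by showing $(1)\Leftrightarrow(2)$ and $(2)\Leftrightarrow(3)$, leveraging Lemma~\ref{lem:equivalence_dde_dds} for the bulk of the first equivalence and an unrolling argument for the second. First I would note that a $m\tau$-periodic orbit of the DDE~\eqref{eq:dde} is, by definition, a solution $t\in[0,m\tau]\mapsto S_t(\phi)$ of the DDE with the matching endpoint condition $S_{m\tau}(\phi)=S_0(\phi)=\phi$. Applying Lemma~\ref{lem:equivalence_dde_dds} directly, the existence of such a solution on $[0,m\tau]$ is equivalent to $j\in\{0,\dots,m-1\}\mapsto\mathcal{F}^j(\phi)$ being a solution of the DDS~\eqref{eq:dds} (in particular $\phi\in\mathcal{C}^n$ and each iterate stays in $\mathcal{C}^n$). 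The periodicity condition $S_{m\tau}(\phi)=\phi$ then translates, via the identity $S_{m\tau}(\phi)=\mathcal{F}^m(\phi)$ established in the proof of Lemma~\ref{lem:equivalence_dde_dds}, into $\mathcal{F}^m(\phi)=\phi$, which is exactly the statement that $j\in\mathbb{Z}/m\mathbb{Z}\mapsto\mathcal{F}^j(\phi)$ is an $m$-periodic orbit of~\eqref{eq:dds}. This gives $(1)\Leftrightarrow(2)$.

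For $(2)\Leftrightarrow(3)$, I would argue directly from the definition of $\mathring{\mathcal{F}}$ in~\eqref{eq:unrolled_dds}. Suppose $(2)$ holds and set $\phi^{(j)}\bydef\mathcal{F}^j(\phi)$ for $j=0,\dots,m-1$, so that $\boldsymbol\phi\bydef(\phi^{(0)},\dots,\phi^{(m-1)})\in(\mathcal{C}^n)^m$. Reading off the components of $\mathring{\mathcal{F}}(\boldsymbol\phi)$, the $i$-th component is $\mathcal{F}(\phi^{(i-1)})=\mathcal{F}^i(\phi)=\phi^{(i)}$ for $i=1,\dots,m-1$, while the first component is $\mathcal{F}(\phi^{(m-1)})=\mathcal{F}^m(\phi)=\phi=\phi^{(0)}$ by $m$-periodicity. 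Hence $\mathring{\mathcal{F}}(\boldsymbol\phi)=\boldsymbol\phi$, i.e. $\boldsymbol\phi$ is a fixed point of~\eqref{eq:unrolled_dds}. Conversely, if $\boldsymbol\phi=(\phi_1,\dots,\phi_m)$ is a fixed point of $\mathring{\mathcal{F}}$, then the component equations read $\phi_1=\mathcal{F}(\phi_m)$ and $\phi_{i}=\mathcal{F}(\phi_{i-1})$ for $i=2,\dots,m$; iterating gives $\phi_i=\mathcal{F}^{i-1}(\phi_1)$ for all $i$ and then $\phi_1=\mathcal{F}(\phi_m)=\mathcal{F}^m(\phi_1)$, so with $\phi\bydef\phi_1$ we recover statement $(2)$. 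One should also check that the well-definedness condition $\boldsymbol\phi\in(\mathcal{C}^n)^m$ is consistent on both sides: this is immediate since $\phi_i=\mathcal{F}^{i-1}(\phi)$ lies in $\mathcal{C}^n$ precisely when the DDS orbit is defined.

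I expect the only subtlety—rather than a genuine obstacle—to be bookkeeping the relationship between the \emph{semi-}flow $S_t$ on the half-line and the genuinely periodic object $t\in\mathbb{R}/m\tau\mathbb{Z}\mapsto S_t(\phi)$: one must be slightly careful that a solution on $[0,m\tau]$ with matched endpoints extends (by the semigroup property and the time-translation structure of the DDE) to a globally defined $m\tau$-periodic function, and conversely that restriction of a periodic solution yields a solution on $[0,m\tau]$ in the sense of Lemma~\ref{lem:equivalence_dde_dds}. Once the identification $S_{m\tau}(\phi)=\mathcal{F}^m(\phi)$ is in hand (already contained in the proof of Lemma~\ref{lem:equivalence_dde_dds}), everything else is a routine unrolling of compositions, so the proof is short.
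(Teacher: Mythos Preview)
Your proposal is correct and follows essentially the same approach as the paper: the equivalence $(1)\Leftrightarrow(2)$ is obtained directly from Lemma~\ref{lem:equivalence_dde_dds} together with the periodicity condition $S_{m\tau}(\phi)=\mathcal{F}^m(\phi)=\phi$, and $(2)\Leftrightarrow(3)$ is a straightforward component-by-component unrolling of the definition of $\mathring{\mathcal{F}}$. The paper's proof is terser but identical in substance.
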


\begin{proof}
The fact that Point 1 and Point 2 are equivalent follows immediately from Lemma \ref{lem:equivalence_dde_dds}. Moreover, Point 2 means that $\phi, \mathcal{F} (\phi), \dots, \mathcal{F}^{m-1} (\phi) \in \mathcal{C}^n$ such that $\mathcal{F} (\mathcal{F}^{m-1}(\phi)) = \phi$. By construction, this is equivalent to
\[
\mathring{\mathcal{F}} (\phi, \mathcal{F} (\phi), \dots, \mathcal{F}^{m-1} (\phi))
= \begin{pmatrix}
\mathcal{F} (\mathcal{F}^{m-1}(\phi)) \\
\mathcal{F} (\phi) \\
\vdots \\
\mathcal{F} (\mathcal{F}^{m-2}(\phi))
\end{pmatrix}
= \begin{pmatrix}
\phi \\
\mathcal{F} (\phi) \\
\vdots \\
\mathcal{F}^{m-1} (\phi)
\end{pmatrix}. \qedhere
\]
\end{proof}

Hence, Lemma \ref{lem:equivalence_po_fp} states that the computation of an $m\tau$-periodic orbit of the DDE \eqref{eq:dde} amounts to the computation of a fixed-point $c = (c_1, \dots, c_m) \in (C([-\tau, 0], \mathbb{R}^n))^m$ of the DDS \eqref{eq:unrolled_dds}; namely,
\begin{equation}\label{eq:fp_fun}
\begin{cases}
\displaystyle c_1(s) = c_m (0) + \int_{-\tau}^s f( c_1(s'), c_m(s') ) \, ds', \\
\displaystyle c_j(s) = c_{j-1} (0) + \int_{-\tau}^s f( c_j(s'), c_{j-1}(s') ) \, ds' , & j = 2, \dots, m,
\end{cases}\qquad \text{for all } s \in [-\tau, 0].
\end{equation}
Recall that the right-hand-side of the DDE \eqref{eq:dde} is polynomial, thereby guaranteeing that its periodic solutions are analytic (e.g. see \cite{Nussbaum1973}).
A practical basis for analytic functions on $[-\tau, 0]$ are the Chebyshev polynomials of the first kind given by
\begin{equation}\label{eq:def_cheb}
T_\alpha (t) = \cos(\alpha \arccos(t)), \qquad \alpha = 0, 1, 2,\dots \quad \text{and} \quad t \in [-1, 1].
\end{equation}
Thus, we expand $c_1, \dots, c_m$ as the Chebyshev series
\[
c_j (s(t)) = \{\mathbf{c}_j\}_0 + 2 \sum_{\alpha \ge 1} \{\mathbf{c}_j\}_\alpha T_\alpha (t), \quad \text{for all } t \in [-1, 1], \qquad j = 1, \dots, m,
\]
where $s(t) \bydef \frac{\tau}{2}(t-1)$ scales $[-1, 1]$ to $[-\tau, 0]$.
The analyticity of $c_1, \dots, c_m$ implies that there exists $\nu > 1$ such that their sequence of Chebyshev coefficients belongs to
\[
\ell^1_\nu \bydef \left\{ \mathbf{a} \in \mathbb{C}^{\mathbb{N} \cup \{0\}} \, : \, | \mathbf{a} |_{\ell^1_\nu} \bydef | \{\mathbf{a}\}_0 | + 2\sum_{\alpha \ge 1} | \{\mathbf{a}\}_\alpha | \nu^\alpha < \infty \right\}.
\]

\begin{remark}
The reader may wonder why $\ell^1_\nu$ is a sequence space over the complex field $\mathbb{C}$ and not the real field $\mathbb{R}$ since we only care for real solutions of the DDE \eqref{eq:dde_elem}.
For now, it suffices to say that this slight generalization will allow us to handle the case of complex unstable eigenvalues.
\end{remark}

The sequence space $\ell^1_\nu$ is a Banach algebra with the discrete convolution product
\begin{equation}\label{eq:def_Cheb_conv}
\mathbf{a} * \mathbf{b} \bydef \left\{\sum_{\beta \in \mathbb{Z}} \{\mathbf{a}\}_{|\alpha - \beta|} \{\mathbf{b}\}_{|\beta|}\right\}_{\alpha \ge 0}, \qquad \text{for all } \mathbf{a}, \mathbf{b} \in \ell^1_\nu,
\end{equation}
which corresponds to the natural convolution in Fourier space through \eqref{eq:def_cheb}.
It follows that there is a natural mapping, denoted with the same symbol, $f : (\ell^1_\nu)^n \times (\ell^1_\nu)^n \to (\ell^1_\nu)^n$ defined by replacing products of Chebyshev series with the aforementioned convolution product $*$.
There should be no confusion from this abuse of notation since $f$ denotes in both cases the same polynomial where the algebra depends directly on the nature of its arguments.

Then, the system of equations \eqref{eq:fp_fun} is equivalent to
\[
\begin{cases}
\mathbf{c}_1 = \mathbf{E} (\mathbf{c}_m) + \mathbf{S} (\tfrac{\tau}{2}f( \mathbf{c}_1, \mathbf{c}_m )), \\
\mathbf{c}_j = \mathbf{E} (\mathbf{c}_{j-1}) + \mathbf{S} (\tfrac{\tau}{2}f( \mathbf{c}_j, \mathbf{c}_{j-1} )), & j = 2, \dots, m,
\end{cases}
\]
where $\mathbf{E}, \mathbf{S} : (\ell^1_\nu)^n \to (\ell^1_\nu)^n$ represent the evaluation at $1$ and the integral from $-1$ to $s$ respectively. Namely, for all $\mathbf{a} = (\mathbf{a}_1, \dots, \mathbf{a}_n) \in (\ell^1_\nu)^n$ and $i = 1, \dots, n$,
\begin{subequations}
\begin{align}
\begin{split}\label{eq:def_Cheb_eval}
\{(\mathbf{E}(\mathbf{a}))_i\}_\alpha
&\bydef
\begin{cases}
\displaystyle \{\mathbf{a}_i\}_0 + 2\sum_{\beta \ge 1} \{\mathbf{a}_i\}_\beta, & \alpha = 0, \\
0, & \alpha \ge 1,
\end{cases}
\end{split} \\
\begin{split}\label{eq:def_Cheb_integral}
\{(\mathbf{S}(\mathbf{a}))_i\}_\alpha
&\bydef
\begin{cases}
\displaystyle \{\mathbf{a}_i\}_0 - \frac{\{\mathbf{a}_i\}_1}{2} - 2\sum_{\beta \ge 2} \frac{(-1)^\beta \{\mathbf{a}_i\}_\beta}{\beta^2 - 1}, & \alpha = 0, \\
\displaystyle \frac{\{\mathbf{a}_i\}_{\alpha-1} - \{\mathbf{a}_i\}_{\alpha+1}}{2\alpha}, & \alpha \ge 1.
\end{cases}
\end{split}
\end{align}
\end{subequations}

We now formalize our search for a periodic orbit of the DDE \eqref{eq:dde_elem} as a zero-finding problem.
There are two cases to address: either the DDE \eqref{eq:dde} coincides exactly with \eqref{eq:dde_elem}, or \eqref{eq:dde} is an auxiliary polynomial DDE of the original DDE \eqref{eq:dde_elem}.

To start with, suppose that the DDEs \eqref{eq:dde_elem} and \eqref{eq:dde} are identical, so $n = 1$.
Consider the mapping $\mathbf{F}_\circ : \mathbb{R} \times (\ell^1_\nu \cap \mathbb{R}^{\mathbb{N}\cup\{0\}})^m \to \mathbb{R} \times (\ell^1_\nu \cap \mathbb{R}^{\mathbb{N}\cup\{0\}})^m$ defined by
\begin{equation}\label{eq:zero_finding_problem_fp}
\mathbf{F}_\circ (\tau, \mathbf{c})
\bydef
\begin{pmatrix}
\{\mathbf{E}(\mathbf{c}_m)\}_0 - \delta \\
\mathbf{E} (\mathbf{c}_m) + \mathbf{S} (\frac{\tau}{2} f( \mathbf{c}_1, \mathbf{c}_m )) - \mathbf{c}_1 \\
\mathbf{E} (\mathbf{c}_1) + \mathbf{S} (\frac{\tau}{2} f( \mathbf{c}_2, \mathbf{c}_1 )) - \mathbf{c}_2 \\
\vdots \\
\mathbf{E} (\mathbf{c}_{m-1}) + \mathbf{S} (\frac{\tau}{2} f( \mathbf{c}_m, \mathbf{c}_{m-1} )) - \mathbf{c}_m
\end{pmatrix},
\end{equation}
where $\delta \in \mathbb{R}$ is fixed.
If $\mathbf{F}_\circ (\tau, \mathbf{c}) = 0$ with $\tau > 0$, then $\mathbf{c}_1, \dots, \mathbf{c}_m$ are the sequences of Chebyshev coefficients of an $m\tau$-periodic orbit of the DDE \eqref{eq:dde_elem}.
Observe that we impose $\{\mathbf{E}(\mathbf{c}_m)\}_0 - \delta = 0$ to quotient out the temporal translation invariance of the periodic orbit which transpires in the DDS \eqref{eq:unrolled_dds} as a 1-parameter family of fixed-points.
The specific choice of $\delta$ is determined from the numerical observations.
Additionally, this condition is compensated by solving for the value of the delay $\tau$ for which the period is a multiple of the delay.

On the other hand, when the polynomial DDE \eqref{eq:dde} is in fact an auxiliary polynomial DDE (recall the construction of $f$ given in \eqref{eq:f_poly}) of the DDE \eqref{eq:dde_elem} with elementary nonlinearities, a zero of the mapping $\mathbf{F}_\circ$ does not necessarily yield a periodic solution of \eqref{eq:dde_elem}.
Indeed, one must append the extra conditions $v_0 = \varphi(w_0)$ for solutions of \eqref{eq:dde} to coincide with solutions of \eqref{eq:dde_elem}.
According to Lemma 3.2 in \cite{Henot2021}, this requirement can be compensated by introducing \emph{unfolding parameters}.
We note that there is a slight limitation in the current statement of this lemma as the proxy variables $\eta \in \mathbb{R}^d$ cannot compensate the equality $v_0 = \varphi(w_0)$ set on the function space $C([-\tau, 0], \mathbb{R}^d)$.
Nevertheless, the proof of the lemma actually proves the stronger and more useful result that it suffices to impose the equality $v_0 (0) = [\varphi(w_0)](0)$ on $\mathbb{R}^d$.
We report this small modification of the result in the following lemma.

\begin{lemma}\label{lem:Lemma_3_2_revisited}
Consider a DDE \eqref{eq:dde_elem} with elementary nonlinearities and its auxiliary polynomial DDE \eqref{eq:dde}, where $f$ has the form given in \eqref{eq:f_poly}. Let $\eta \in \mathbb{R}^d$ and $t \in \mathbb{R} \to u_t = (w_t, v_t) \in C([-\tau, 0], \mathbb{R}^{1 + d})$ be a periodic solution of
\[
\frac{d}{d t}
u(t)
=
\begin{pmatrix}
g^{(1)}(u(t), u_t(-\tau)) \\
g^{(2)}(u(t), u_t(-\tau)) + \eta
\end{pmatrix}.
\]
If $v (0) = [\varphi(w_0)](0)$, then $t \in \mathbb{R} \to w_t$ is a periodic solution of \eqref{eq:dde_elem}; in other words, $\eta = 0$.
\end{lemma}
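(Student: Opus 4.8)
The plan is to reduce the delay problem to a linear inhomogeneous ODE for the ``constraint defect'' and then use periodicity. Let $\Phi = (h_1,\dots,h_d) : \mathbb{R} \to \mathbb{R}^d$ collect the scalar nonlinearities defining $\varphi$, so that $[\varphi(w_t)](s) = \Phi(w(t+s))$; the hypothesis then says exactly $z(0) = 0$, where $z(t) \bydef v(t) - \Phi(w(t)) \in \mathbb{R}^d$ (note this uses only the pointwise condition $v(0) = [\varphi(w_0)](0)$, which is precisely why it is enough). Writing $T > 0$ for the period of $u_t = (w_t,v_t)$, the functions $w$, $v$, hence $z$, are $C^1$ and $T$-periodic on $\mathbb{R}$ (on the domain where $\varphi$ and the embedding make sense). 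The structural input I would use is that, by the chain rule underlying the construction \eqref{eq:f_poly} (cf.\ Theorem~2.2 in \cite{Henot2021}; see \eqref{eq:f_mg} for the explicit Mackey--Glass form), one can write $g^{(2)}(u(t),u_t(-\tau)) = \mathcal{P}(w(t),v(t))\, g^{(1)}(u(t),u_t(-\tau))$ for a polynomial $\mathcal{P}$ with $\mathcal{P}(x,\Phi(x)) = D\Phi(x)$, while $g^{(1)}(u(t),u_t(-\tau)) = \dot{w}(t)$ directly from the first block of the $\eta$-system.

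Differentiating along the orbit, $\tfrac{d}{dt}\Phi(w(t)) = D\Phi(w(t))\,\dot{w}(t) = \mathcal{P}(w(t),\Phi(w(t)))\, g^{(1)}(u(t),u_t(-\tau))$, whereas the second block gives $\dot{v}(t) = \mathcal{P}(w(t),v(t))\, g^{(1)}(u(t),u_t(-\tau)) + \eta$. Subtracting and using the fundamental theorem of calculus in the second argument of $\mathcal{P}$, I obtain $\dot{z}(t) = A(t)\,z(t) + \eta$ with $A(t) \bydef \bigl(\int_0^1 D_v\mathcal{P}(w(t),\Phi(w(t)) + \theta z(t))\, d\theta\bigr)\, g^{(1)}(u(t),u_t(-\tau))$ continuous and $T$-periodic (there is no circularity in $A$ depending on $z$: the function $z = v - \Phi\circ w$ is given, so $A$ is a determined coefficient). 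The key point is that $A$ is \emph{triangular} after a suitable ordering of the auxiliary variables, since the embedding introduces the $v_i$ hierarchically so that each $\mathcal{P}_i$ depends only on $v_i,\dots,v_d$ --- as one reads off directly from \eqref{eq:f_mg}.

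To finish, let $\Psi$ solve $\dot{\Psi} = A(t)\Psi$, $\Psi(0) = I$. Variation of parameters gives $z(T) = \Psi(T)\bigl( z(0) + (\int_0^T \Psi(s)^{-1}\, ds)\, \eta \bigr)$, and $z(0) = z(T) = 0$ with $\Psi(T)$ invertible forces $(\int_0^T \Psi(s)^{-1}\, ds)\,\eta = 0$. Triangularity of $A$ makes $\Psi$ and $\Psi^{-1}$ triangular, the latter with diagonal entries $e^{-\int_0^s A_{ii}}$, so $\int_0^T \Psi(s)^{-1}\, ds$ is triangular with diagonal entries $\int_0^T e^{-\int_0^s A_{ii}(r)\, dr}\, ds > 0$ --- hence invertible --- and therefore $\eta = 0$. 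With $\eta = 0$, $z$ satisfies $\dot{z} = A(t) z$ and $z(0) = 0$, so $z \equiv 0$ by uniqueness; thus $v(t) = \Phi(w(t))$ for all $t$, i.e.\ $v_0 = \varphi(w_0)$, and the fact recalled just after \eqref{eq:f_poly} shows that $w$ solves \eqref{eq:dde_elem}.

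The main obstacle I anticipate is the triangularity claim. It is transparent for \eqref{eq:f_mg}, but for a general elementary nonlinearity one must check that the recursive construction of the auxiliary DDE admits an ordering of the $v_i$ rendering $D_v\mathcal{P}$ triangular. Some such structure is essential: for an arbitrary continuous $T$-periodic coefficient $A$ the matrix $\int_0^T \Psi(s)^{-1}\, ds$ can be singular (e.g.\ a planar rotation block whose period divides $T$), in which case periodicity together with $z(0) = 0$ would not pin down $\eta$.
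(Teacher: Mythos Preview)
The paper gives no self-contained proof here: it simply cites Lemma~3.2 of \cite{Henot2021}. Your argument is therefore not a variant of the paper's but an independent attempt, and its skeleton is sound. Reducing to the linear inhomogeneous ODE $\dot z = A(t)z + \eta$ for the defect $z = v - \Phi\circ w$, using $z(0)=z(T)=0$, and concluding via $\bigl(\int_0^T\Psi^{-1}\bigr)\eta=0$ is the right strategy; for the Mackey--Glass embedding \eqref{eq:f_mg} the triangularity you read off is genuine, so your proof is complete for both concrete examples treated in the paper (cubic Ikeda trivially, since there $d=0$).

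The gap you flag is real, however, and your rotation-block worry is not hypothetical: the standard embedding of a sine nonlinearity via $v_1=\sin w$, $v_2=\cos w$ gives $\mathcal P = (v_2,\,-v_1)$, hence $D_v\mathcal P=\bigl(\begin{smallmatrix}0&1\\-1&0\end{smallmatrix}\bigr)$, which admits \emph{no} triangular reordering, and $A(t)=\dot w(t)\bigl(\begin{smallmatrix}0&1\\-1&0\end{smallmatrix}\bigr)$ is exactly a rotation generator; the matrix $\int_0^T\Psi(s)^{-1}\,ds$ then has determinant $\bigl|\int_0^T e^{i(w(s)-w(0))}\,ds\bigr|^2$, which nothing in your argument prevents from vanishing. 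So for the lemma in its stated generality one cannot rely on triangularity alone. Either the specific recursive construction of Theorem~2.2 in \cite{Henot2021} supplies additional structure that controls such blocks, or the proof in \cite{Henot2021} proceeds along a different route. Since the present paper defers entirely to that reference, settling which of these holds requires consulting it; your write-up would be strengthened by either doing so or by restricting the claim to embeddings for which the triangular ordering is available.
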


\begin{proof}
See the proof of Lemma 3.2 in \cite{Henot2021}.
\end{proof}

For all $(\tau, \eta, \mathbf{c}) \in \mathbb{R} \times \mathbb{R}^d \times ((\ell^1_\nu)^{1+d})^m$ and $\mathbf{c}_j = (\mathbf{c}_j^{(1)}, \mathbf{c}_j^{(2)}) \in (\ell^1_\nu)^{1+d}$ such that $\mathbf{c}_j^{(1)} \in \ell^1_\nu$, $\mathbf{c}_j^{(2)} \in (\ell^1_\nu)^d$ for $j = 1, \dots, m$, consider the mapping $\mathbf{F}_{\circ, \textnormal{elem}} : \mathbb{R} \times \mathbb{R}^d \times ((\ell^1_\nu \cap \mathbb{R}^{\mathbb{N}\cup\{0\}})^{1+d})^m \to \mathbb{R} \times \mathbb{R}^d \times ((\ell^1_\nu \cap \mathbb{R}^{\mathbb{N}\cup\{0\}})^{1+d})^m$ defined by
\begin{equation}\label{eq:zero_finding_problem_fp_poly}
\mathbf{F}_{\circ, \textnormal{elem}} (\tau, \eta, \mathbf{c})
\bydef
\begin{pmatrix}
\{\mathbf{E}(\mathbf{c}_m)\}_0 - \begin{pmatrix}\delta\\ \varphi(\delta) \end{pmatrix}\\
\mathbf{E} (\mathbf{c}_m) +
\mathbf{S} \left(\frac{\tau}{2} f(\mathbf{c}_1, \mathbf{c}_m) +
\begin{pmatrix}0 \\ \boldsymbol{\iota}(\eta))\end{pmatrix} \right)
- \mathbf{c}_1 \\
\mathbf{E} (\mathbf{c}_1) +
\mathbf{S} \left(\frac{\tau}{2} f(\mathbf{c}_2, \mathbf{c}_1) +
\begin{pmatrix}0 \\ \boldsymbol{\iota}(\eta))\end{pmatrix}\right)
- \mathbf{c}_2 \\
\vdots \\
\mathbf{E} (\mathbf{c}_{m-1}) +
\mathbf{S} \left(\frac{\tau}{2} f(\mathbf{c}_m, \mathbf{c}_{m-1}) +
\begin{pmatrix}0 \\ \boldsymbol{\iota}(\eta))\end{pmatrix}\right)
- \mathbf{c}_m
\end{pmatrix},
\end{equation}
where $\varphi(\delta)$ is understood as $[\varphi(\kappa)](0)$ where $\kappa(s) = \delta$ for all $s \in [-\tau,0]$ and $\boldsymbol{\iota} : \mathbb{C}^d \to (\ell^1_\nu)^d$ is the injection
\begin{equation}\label{eq:def_iota}
\{(\boldsymbol{\iota}(\eta))_i\}_\alpha \bydef
\begin{cases}
\eta_i, & \alpha = 0, \\
0, & \alpha \ge 1, 
\end{cases} \quad i = 1, \dots, d, \qquad \text{for all } \eta \in \mathbb{C}^d.
\end{equation}
According to Lemma \ref{lem:Lemma_3_2_revisited}, if $\mathbf{F}_{\circ, \textnormal{elem}} (\tau, \eta, \mathbf{c}) = 0$ with $\tau > 0$, then $\eta = 0$ and $\mathbf{c}_1^{(1)}, \dots, \mathbf{c}_m^{(1)}$ are the sequences of Chebyshev coefficients of an $m\tau$-periodic orbit of the original DDE \eqref{eq:dde_elem}.

\subsection{Numerical considerations}
\label{sec:periodic_orbit_num}

The role of the zero-finding problems $\mathbf{F}_\circ$ \eqref{eq:zero_finding_problem_fp} and $\mathbf{F}_{\circ, \textnormal{elem}}$ \eqref{eq:zero_finding_problem_fp_poly} are to obtain the central object of our Poincar\'{e} scenario: the periodic orbit whose unstable manifold intersect transversely its stable manifold.
From a practical point of view, only finitely many Chebyshev coefficients can be handled by a computer.
So, given an order $N \in \mathbb{N} \cup \{0\}$, we define the truncation operator $\boldsymbol{\pi}^N : \ell^1_\nu \to \ell^1_\nu$ by
\[
\{ \boldsymbol{\pi}^N \mathbf{a} \}_\alpha \bydef
\begin{cases}
\{\mathbf{a}\}_\alpha, & \alpha \le N, \\
0, & \alpha > N,
\end{cases} \qquad \text{for all } \mathbf{a} \in \ell^1_\nu.
\]
This operator extends in a natural fashion to $\mathbb{C}$ by acting as the identity and to cartesian products of $\mathbb{C}$ and $\ell^1_\nu$ by acting component-wise.

The general gist to implement the zero-finding problems $\mathbf{F}_\circ$ and $\mathbf{F}_{\circ, \textnormal{elem}}$ is to store the Chebyshev coefficients as numerical vectors for which one defines the convolution product $*$ \eqref{eq:def_Cheb_conv}, the evaluation operator $\mathbf{E}$ \eqref{eq:def_Cheb_eval} and the integration operator $\mathbf{S}$ \eqref{eq:def_Cheb_integral}.
We rely on the \emph{RadiiPolynomial} library \cite{Henot2021-2} to handle this.

Then, an approximate zero of $\mathbf{F}_\circ$ (resp. $\mathbf{F}_{\circ,\textnormal{elem}}$) is obtained by applying Newton’s method to the finite dimensional problem $\boldsymbol{\pi}^N \mathbf{F}_\circ \boldsymbol{\pi}^N$ (resp. $\boldsymbol{\pi}^N \mathbf{F}_{\circ, \textnormal{elem}} \boldsymbol{\pi}^N$). To be explicit, having an initial guess $\bar{\tau} > 0, \bar{\mathbf{c}} \in \boldsymbol{\pi}^N (\ell^1_\nu \cap \mathbb{R}^{\mathbb{N}\cup\{0\}})^m$ (typically generated from the time series of a numerical integration of the DDE \eqref{eq:dde}), one recursively applies the iterates
\begin{equation}\label{eq:Newton_method_fp} 
\begin{pmatrix}\bar{\tau} \\ \bar{\mathbf{c}}\end{pmatrix}
\mapsto
\begin{pmatrix}\bar{\tau} \\ \bar{\mathbf{c}}\end{pmatrix} - (\boldsymbol{\pi}^N D\mathbf{F}_\circ (\bar{\tau}, \bar{\mathbf{c}}) \boldsymbol{\pi}^N)^{-1} \boldsymbol{\pi}^N \mathbf{F}_\circ (\bar{\tau}, \bar{\mathbf{c}}),
\end{equation}
and similarly for $\mathbf{F}_{\circ, \textnormal{elem}}$.
Concerning the choice of the truncation order $N$, one typically adjusts it depending on the available memory and the tolerance below which one deems the remaining terms negligible; the latter is bound to the precision used for the computations, e.g. machine precision is of order $\sim 10^{-16}$ in double precision.

Let us expand slightly on the memory consumption of the scheme.
We work with truncated sequence spaces $\boldsymbol{\pi}^N (\ell^1_\nu \cap \mathbb{R}^{\mathbb{N}\cup\{0\}})$ consisting of sequences with $N+1$ non-trivial Chebyshev coefficients in $\mathbb{R}$.
Hence, formally, the mapping $\boldsymbol{\pi}^N \mathbf{F}_\circ \boldsymbol{\pi}^N$ sends $\mathbb{R}^{1 + m(N+1)}$ into itself; similarly, $\boldsymbol{\pi}^N \mathbf{F}_{\circ,\textnormal{elem}} \boldsymbol{\pi}^N$ sends $\mathbb{R}^{1 + d + m(1+d)(N+1)}$ into itself.
While we do not detail this, depending on the profile of the solution (how large $m$ and $N$ are) one may want to exploit the structure of the Fr\'{e}chet derivative $D\mathbf{F}_\circ$ and $D \mathbf{F}_{\circ, \textnormal{elem}}$ as these block-wise operators have sparse matrix representations.

We conclude this section by detailing the computation of a $m\tau$-periodic orbit in the numerically observed chaotic attractor of the cubic Ikeda equation \eqref{eq:cubic_ikeda} (see also Figure \ref{fig:cubic_ikeda_attractor}).

\subsection{Example: periodic orbit for the cubic Ikeda equation}
\label{sec:cubic_ikeda_po}

Consider the cubic Ikeda equation \eqref{eq:cubic_ikeda}. We first fix a value for the delay $\tau$ within the numerically observed chaotic window $1.538 \lesssim \tau \lesssim 1.723$ (e.g. see \cite{Sprott2007}).
By a standard method of steps (using, for instance, the Tsitouras 5/4 Runge-Kutta method), we sweep the chaotic attractor, looking for a periodic orbit.
Once an approximate periodic time series is identified, we perform a simple parameter continuation with respect to the delay $\tau$ so as to have approximately a period $m\tau$ for some $m \in \mathbb{N}$.

Next, we split into $m$ pieces the time series of the periodic orbit and retrieve their Chebyshev series.
At this point, we have obtained an initial guess for Newton's iterations \eqref{eq:Newton_method_fp}.
Precisely, for the cubic Ikeda equation, $f$, given in \eqref{eq:f_cubic_ikeda}, is polynomial and acts on the sequences of Chebyshev coefficients as $f (\mathbf{a}, \mathbf{b}) = \mathbf{b} - \mathbf{b}^{*3}$ where $\mathbf{b}^{*k} \bydef \underbrace{\mathbf{b} * \dots * \mathbf{b}}_{k \text{ times}}$.
It follows that
\[
\mathbf{F}_\circ (\tau, \mathbf{c})
=
\begin{pmatrix}
\{\mathbf{E}(\mathbf{c}_m)\}_0 - \delta \\
\mathbf{E} (\mathbf{c}_m) + \mathbf{S} ( \frac{\tau}{2} (\mathbf{c}_m - \mathbf{c}_m^{*3}) ) - \mathbf{c}_1 \\
\mathbf{E} (\mathbf{c}_1) + \mathbf{S} ( \frac{\tau}{2} (\mathbf{c}_1 - \mathbf{c}_1^{*3}) ) - \mathbf{c}_2 \\
\vdots \\
\mathbf{E} (\mathbf{c}_{m-1}) + \mathbf{S} ( \frac{\tau}{2} (\mathbf{c}_{m-1} - \mathbf{c}_{m-1}^{*3}) ) - \mathbf{c}_m
\end{pmatrix},
\]
where the phase $\delta$ is prescribed by the numerical data, and
\begin{align*}
&D\mathbf{F}_\circ (\tau, \mathbf{c}) \\
&=
\begin{pmatrix}
D_\tau \mathbf{F}_\circ (\tau,\mathbf{c}) &
D_{\mathbf{c}_1} \mathbf{F}_\circ (\tau,\mathbf{c}) &
D_{\mathbf{c}_2} \mathbf{F}_\circ (\tau,\mathbf{c}) &
\dots &
D_{\mathbf{c}_m} \mathbf{F}_\circ (\tau,\mathbf{c})
\end{pmatrix} \\
&=
{\small
\begin{pmatrix}
0 &
0 & 0 & \cdots & 0 & \mathbf{E} \\
\mathbf{S}(\frac{1}{2}(\mathbf{c}_m - \mathbf{c}_m^{*3})) &
- \mathbf{I} & 0 & \cdots & 0 & \mathbf{E} + \mathbf{S}[\frac{\tau}{2}(\mathbf{I} - 3\mathbf{M}_{\mathbf{c}_m^{*2}})] \\
\mathbf{S}(\frac{1}{2}(\mathbf{c}_1 - \mathbf{c}_1^{*3})) &
\mathbf{E} + \mathbf{S}[\frac{\tau}{2}(\mathbf{I} - 3\mathbf{M}_{\mathbf{c}_1^{*2}})] & -\mathbf{I} & & & 0 \\
\vdots &
& & \ddots & & \\
\mathbf{S}(\frac{1}{2}(\mathbf{c}_{m-1} - \mathbf{c}_{m-1}^{*3})) &
0 & & & \mathbf{E} + \mathbf{S}[\frac{\tau}{2}(\mathbf{I} - 3\mathbf{M}_{\mathbf{c}_{m-1}^{*2}})] & -\mathbf{I}
\end{pmatrix}
},
\end{align*}
where $\mathbf{I}$ is the identity on $\ell^1_\nu$ and $\mathbf{M}_\mathbf{a} : \ell^1_\nu \to \ell^1_\nu$ is the multiplication operator of a given $\mathbf{a} \in \ell^1_\nu$, specifically $\mathbf{M}_\mathbf{a} (\mathbf{b}) \bydef \mathbf{a} * \mathbf{b}$ for all $\mathbf{b} \in \ell^1_\nu$.
Observe that $f(\mathbf{a}, \mathbf{b})$ is independent of $\mathbf{a}$, which has simplified a little the expression for $D\mathbf{F}_\circ (\tau, \mathbf{c})$.

\begin{figure}
\centering
\begin{subfigure}[b]{0.41\textwidth}
\includegraphics[width=\textwidth]{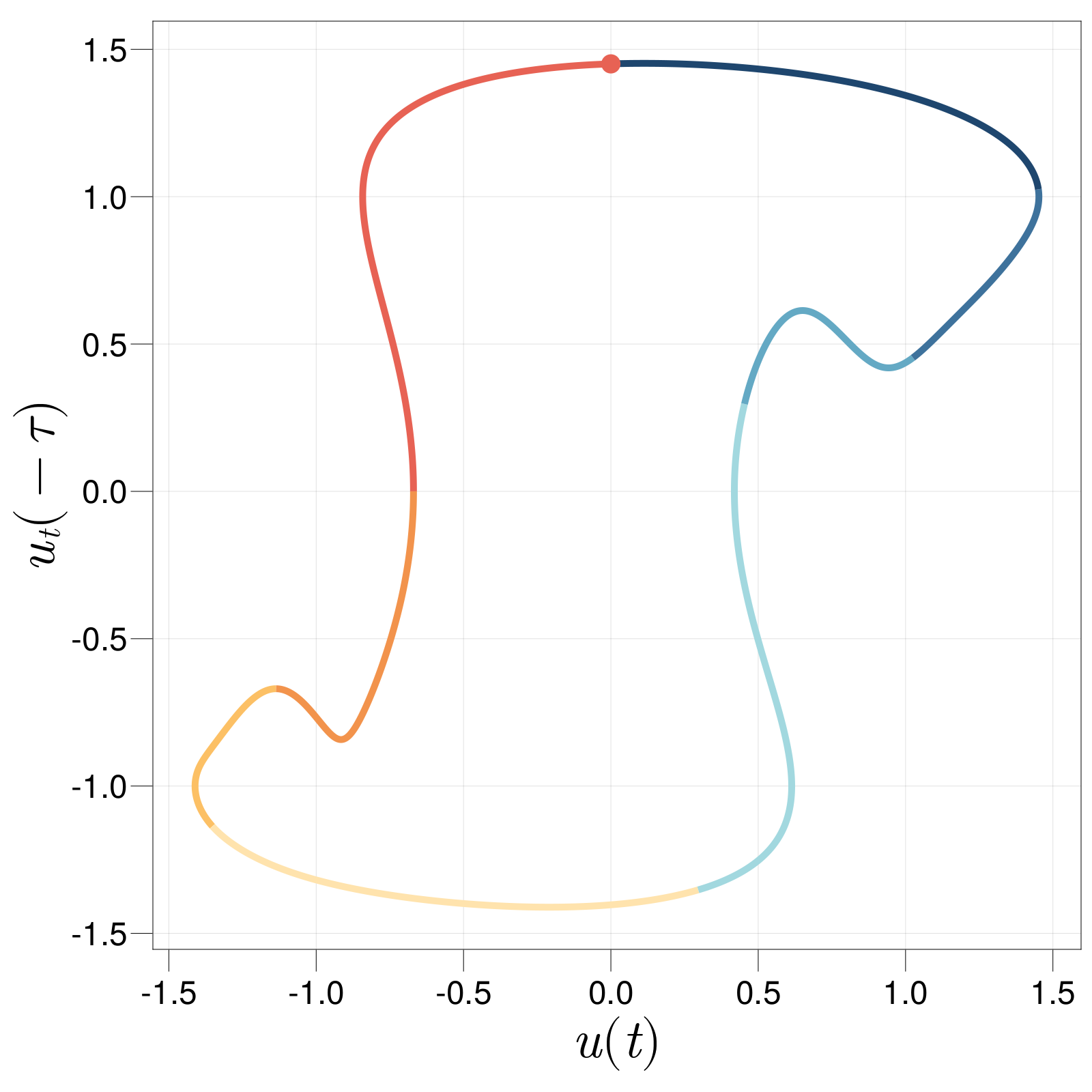}
\caption{}
\end{subfigure}
\hspace{0.06\textwidth}
\begin{subfigure}[b]{0.41\textwidth}
\includegraphics[width=\textwidth]{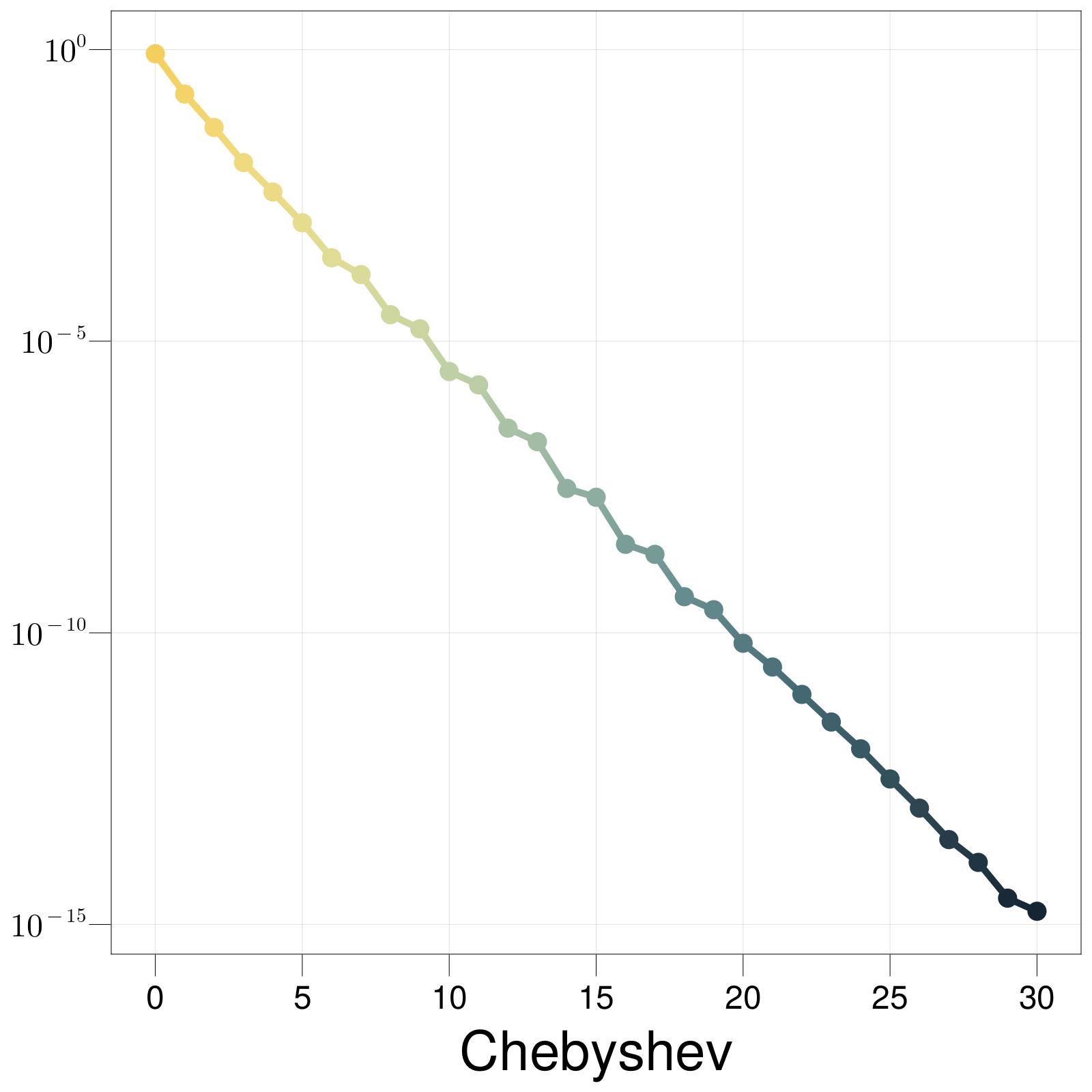}
\caption{}
\end{subfigure}
\caption{(a) $m\tau$-periodic orbit, with $m=8$, for the cubic Ikeda equation. The dot corresponds to the phase $\delta = 0$ of the periodic orbit.
(b) Average $\{ m^{-1}\sum_{j=1}^m |\{ (\bar{\mathbf{c}}_\textnormal{init})_j \}_\alpha| \}_{\alpha \ge 0}$ of the sequences of Chebyshev coefficients of the $m\tau$-periodic orbit shown in (a).}
\label{fig:cubic_ikeda_po}
\end{figure}

In our case, we identified a time series of a $m\tau$-periodic orbit with $m = 8$ and a phase $\delta = 0$.
We choose the truncation order $N = 30$ for the Chebyshev series.
Therefore, the Newton iterations are set on $\mathbb{R} \times \boldsymbol{\pi}^N (\ell^1_\nu \cap \mathbb{R}^{\mathbb{N}\cup\{0\}})^m \simeq \mathbb{R} \times \mathbb{R}^{m(N+1)} \simeq \mathbb{R}^{1 + 8\times31} = \mathbb{R}^{249}$.
Performing Newton's iterations yields $\bar{\tau}_\textnormal{init} \approx 1.5649592985680902$ and the sequences of Chebyshev coefficients $\bar{\mathbf{c}}_\textnormal{init} = ((\bar{\mathbf{c}}_\textnormal{init})_1, \dots, (\bar{\mathbf{c}}_\textnormal{init})_m) \in \boldsymbol{\pi}^N (\ell^1_\nu \cap \mathbb{R}^{\mathbb{N}\cup\{0\}})^m$.
Figure \ref{fig:cubic_ikeda_po} shows the approximate $m\tau$-periodic orbit and the average of the sequences Chebyshev coefficients.

%%%%%%%%%%%%%%%%%%%%%%%%
%% EIGENDECOMPOSITION %%
%%%%%%%%%%%%%%%%%%%%%%%%

\section{Computation of the eigendecomposition}\label{sec:eigendecomposition}
%!TEX root = poincare_scenario_dde.tex

To describe the intersection of the invariant manifolds of a periodic orbit of the DDE \eqref{eq:dde} in the framework of the discrete dynamics given by the time-$\tau$ map $\mathcal{F}$, we begin by investigating the spectrum and eigenspaces of the linearized problem.

The assumption that $f$ is a polynomial guarantees that $\mathcal{F}$ is compact (e.g. see \cite{Hale1977}). Its Fr\'{e}chet derivative inherits this property such that the spectrum of $D\mathcal{F}$ is comprised of eigenvalues accumulating at $0$.
Similarly, $\mathring{\mathcal{F}}$ and $D\mathring{\mathcal{F}}$ are also compact.
The following lemma relates the Floquet multipliers of the DDE \eqref{eq:dde} with the ones of the DDSs \eqref{eq:dds} and \eqref{eq:unrolled_dds}.
Note that it is important here that the discrete dynamics are generated by the time-$\tau$ map of the flow of the DDE, and that these results would have to be modified for more general Poincare sections.

\begin{lemma}\label{lem:equivalence_eig}
Let $\tau > 0$, $m \in \mathbb{N}$, $\phi \in C([-\tau, 0], \mathbb{R}^n)$, $\lambda \in \mathbb{C}$ and $v \in C([-\tau, 0], \mathbb{C})^n$. The following statements are equivalent:
\begin{enumerate}
\item $\lambda^m$ is a Floquet multiplier associated to the $m\tau$-periodic orbit $t \in \mathbb{R}/m\tau\mathbb{Z} \mapsto S_t (\phi)$ of the DDE \eqref{eq:dde}. Namely, for $j = 1, \dots, m$, $\lambda^m$ is an eigenvalue of $D S_{m\tau}( S_{(j-1)\tau}(\phi) )$ with eigenvector $v_j = [D\mathcal{F}^{j-1}(\phi)]v$.
\item $\lambda^m$ is a Floquet multiplier associated to the $m$-periodic orbit $j \in \mathbb{Z}/m\mathbb{Z} \mapsto \mathcal{F}^j(\phi)$ of the DDS \eqref{eq:dds}. Namely, for $j = 1, \dots, m$, $\lambda^m$ is an eigenvalue of $D \mathcal{F}^m ( \mathcal{F}^{j-1}(\phi) )$ with eigenvector $v_j = [D\mathcal{F}^{j-1}(\phi)]v$.
\item $\lambda, \lambda e^{i\frac{2\pi}{m}}, \dots, \lambda e^{i\frac{2\pi(m-1)}{m}}$ are eigenvalues associated to the fixed-point $c \bydef (\phi, \mathcal{F}(\phi), \dots, \mathcal{F}^{m-1}(\phi))$ of the DDS \eqref{eq:unrolled_dds}. Namely, for any $k \in \{ 0, \dots, m-1 \}$, the complex number $\lambda_k \bydef \lambda e^{i\frac{2\pi k}{m}}$ is an eigenvalue of $D \mathring{\mathcal{F}}(c)$ with corresponding eigenvector $(v_1, \lambda_k^{-1} v_2, \dots,  \lambda_k^{-(m-1)} v_m)$, where $v_j = [D\mathcal{F}^{j-1}(\phi)]v$ for $j = 1, \dots, m$.
\end{enumerate}
\end{lemma}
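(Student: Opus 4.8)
The plan is to prove the chain of equivalences by establishing $(1)\Leftrightarrow(2)$ via the equivalence of the flow with the discrete system, and then $(2)\Leftrightarrow(3)$ by an explicit computation with the block structure of $D\mathring{\mathcal{F}}$.

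\medskip

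\noindent\textbf{Step 1: $(1)\Leftrightarrow(2)$.} Lemma~\ref{lem:equivalence_dde_dds} already tells us that $S_{m\tau} = \mathcal{F}^m$ on the relevant function spaces, and by the semigroup property $S_{m\tau}(S_{(j-1)\tau}(\phi)) = \mathcal{F}^m(\mathcal{F}^{j-1}(\phi))$. Differentiating, $DS_{m\tau}(S_{(j-1)\tau}(\phi)) = D\mathcal{F}^m(\mathcal{F}^{j-1}(\phi))$ as operators, so the eigenvalue statements in (1) and (2) are literally the same assertion. I would just note that the compactness of $\mathcal{F}$ (stated above the lemma) ensures the spectrum consists of eigenvalues, and that the chain rule applies to $S_t$ since $f$ is smooth (polynomial).

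\medskip

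\noindent\textbf{Step 2: relating $D\mathcal{F}^m$ at a periodic point to $D\mathring{\mathcal{F}}$ at the fixed-point.} Write $L_j \bydef D\mathcal{F}(\mathcal{F}^{j-1}(\phi))$ for $j = 1,\dots,m$, so that $v_j = L_{j-1}\cdots L_1 v$ and $D\mathcal{F}^m(\mathcal{F}^{j-1}(\phi)) = L_{j-1}\cdots L_1 L_m \cdots L_j$ (indices mod $m$). From the definition of $\mathring{\mathcal{F}}$ in \eqref{eq:unrolled_dds}, its Fr\'echet derivative at $c = (\phi,\mathcal{F}(\phi),\dots,\mathcal{F}^{m-1}(\phi))$ is the block matrix that sends $(h_1,\dots,h_m)$ to $(L_m h_m, L_1 h_1, \dots, L_{m-1}h_{m-1})$; that is, it is a cyclic shift composed with the block-diagonal operator $\mathrm{diag}(L_1,\dots,L_m)$ up to the indexing convention. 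The eigenvalue equation $D\mathring{\mathcal{F}}(c)\,(w_1,\dots,w_m) = \mu\,(w_1,\dots,w_m)$ then reads $L_m w_m = \mu w_1$ and $L_{j-1}w_{j-1} = \mu w_j$ for $j = 2,\dots,m$. Solving recursively gives $w_j = \mu^{-(j-1)} L_{j-1}\cdots L_1 w_1$, and the closing condition becomes $L_m \cdots L_1 w_1 = \mu^m w_1$, i.e. $\mu^m$ is an eigenvalue of $D\mathcal{F}^m(\phi)$ with eigenvector $w_1$.

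\medskip

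\noindent\textbf{Step 3: unwinding the $m$-th roots.} Setting $\mu = \lambda_k = \lambda e^{i 2\pi k/m}$ for $k = 0,\dots,m-1$, we have $\mu^m = \lambda^m$ independently of $k$, so each $\lambda_k$ is an eigenvalue of $D\mathring{\mathcal{F}}(c)$ precisely when $\lambda^m$ is an eigenvalue of $D\mathcal{F}^m(\phi)$; taking $w_1 = v$ yields $w_j = \lambda_k^{-(j-1)} L_{j-1}\cdots L_1 v = \lambda_k^{-(j-1)} v_j$, which is exactly the eigenvector claimed in (3). For the converse direction of $(3)\Rightarrow(2)$, I would observe that any eigenvalue $\mu$ of $D\mathring{\mathcal{F}}(c)$ produces, by the recursion above, an eigenvalue $\mu^m$ of $D\mathcal{F}^m(\phi)$; and the consistency of the statement with "$\lambda$" is just a matter of choosing $\lambda$ to be any fixed $m$-th root of $\mu^m$, after which the $m$ numbers $\lambda e^{i2\pi k/m}$ are forced. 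The statement about the eigenvectors at the other base points $\mathcal{F}^{j-1}(\phi)$ (that $v_j = [D\mathcal{F}^{j-1}(\phi)]v$ is an eigenvector of $D\mathcal{F}^m(\mathcal{F}^{j-1}(\phi))$) follows by conjugation: $D\mathcal{F}^m(\mathcal{F}^{j-1}(\phi)) = (L_{j-1}\cdots L_1)\, D\mathcal{F}^m(\phi)\, (L_{j-1}\cdots L_1)^{-1}$ on the appropriate invariant subspace, which intertwines the two eigenvalue problems.

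\medskip

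\noindent\textbf{Main obstacle.} The computations themselves are routine linear algebra; the one place I would be careful is the bookkeeping of the cyclic index shift in $D\mathring{\mathcal{F}}$ (the first block of $\mathring{\mathcal{F}}$ is $\mathcal{F}(\phi_m)$, not $\mathcal{F}(\phi_1)$), making sure the recursion closes up correctly and that the roots-of-unity factors land on the right components of the eigenvector. A secondary point worth a sentence is that the operators $L_j$ need not be invertible (they are compact), so the "conjugation" arguments and the recursion $w_j = \mu^{-(j-1)}L_{j-1}\cdots L_1 w_1$ should be read as genuine forward recursions producing a candidate eigenvector, whose nontriviality is guaranteed because $w_1 = v \neq 0$ and the closing relation holds — one does not actually need $L_j^{-1}$ anywhere, only $\mu^{-1}$, which is legitimate since $\mu = \lambda_k \neq 0$ (a compact operator's nonzero spectrum, and $\lambda \neq 0$ because $\lambda^m$ is a Floquet multiplier, hence nonzero).
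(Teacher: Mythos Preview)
Your proposal is correct and follows essentially the same approach as the paper: the equivalence $(1)\Leftrightarrow(2)$ via Lemma~\ref{lem:equivalence_dde_dds}, and $(2)\Leftrightarrow(3)$ by writing out the block-cyclic structure of $D\mathring{\mathcal{F}}(c)$ and reducing the eigenvalue equation to the closing relation $L_m\cdots L_1 w_1 = \mu^m w_1$. Your treatment is in fact more explicit than the paper's (which simply displays the matrix equality and declares it equivalent to $D\mathcal{F}^m(c_j)v_j = \lambda^m v_j$), and your remarks on avoiding $L_j^{-1}$ and on $\lambda\neq 0$ are well placed; the only cosmetic point is that the ``conjugation'' phrasing in Step~3 is unnecessary, since $D\mathcal{F}^m(\mathcal{F}^{j-1}(\phi))\,v_j = L_{j-1}\cdots L_1\,(L_m\cdots L_1 v) = \lambda^m v_j$ follows directly without any inverse.
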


\begin{proof}
On the one hand, for $j = 1, \dots, m$, Lemma \ref{lem:equivalence_po_fp} guarantees the equality $S_{(j-1)\tau}(\phi) = \mathcal{F}^{j-1} (\phi)$, thus $[D S_{m\tau} ( S_{(j-1)\tau}(\phi) )] v_j = [D \mathcal{F}^m ( \mathcal{F}^{j-1}(\phi) )] v_j$. It follows that Point 1 is equivalent to Point 2.

Denoting $c = (c_1, \dots, c_m)$, we have that
\[
D \mathring{\mathcal{F}} (c) =
\begin{pmatrix}
0 & \cdots & 0 & D \mathcal{F} (c_m) \\
D \mathcal{F} (c_1) & & 0 & 0 \\
& \ddots & & \vdots \\
0 & & D \mathcal{F} (c_{m-1}) & 0
\end{pmatrix}.
\]
Therefore, fixing $k \in \{0, \dots, m-1\}$, we have that the equality
\[
D \mathring{\mathcal{F}} (c)
\begin{pmatrix}
v_1 \\
\lambda_k^{-1} v_2 \\
\vdots \\
\lambda_k^{-(m-1)} v_m
\end{pmatrix}
=
\begin{pmatrix}
\lambda_k^{-(m-1)} [D \mathcal{F}^m (\phi)] v \\
[D \mathcal{F}(\phi)] v \\
\vdots \\
\lambda_k^{-(m-2)} [D \mathcal{F}^{m-1}(\phi)] v
\end{pmatrix}
=
\lambda_k
\begin{pmatrix}
v_1 \\
\lambda_k^{-1} v_2 \\
\vdots \\
\lambda_k^{-(m-1)} v_m
\end{pmatrix}
\]
is equivalent to $D \mathcal{F}^m (c_j) v_j = \lambda_k^m v_j = \lambda^m v_j$ for $j = 1, \dots, m$. It follows that Point 2 is equivalent to Point 3.
\end{proof}

Now, given a periodic orbit, the phase-space $C([-\tau,0], \mathbb{R}^n)$ of the DDE \eqref{eq:dde} can be decomposed into $C([-\tau,0], \mathbb{R}^n) = E_\textnormal{u} \oplus E_\textnormal{c} \oplus E_\textnormal{s}$ where $E_\textnormal{u}, E_\textnormal{c},E_\textnormal{s}$ are the unstable, center and stable eigenspaces respectively.
By compactness of the solution operator, $E_\textnormal{u}$ and $E_\textnormal{c}$ are necessarily finite dimensional and correspond to the span of the unstable and center (generalized) eigenvectors respectively.

Even though the time-$\tau$ map $\mathcal{F}$ is a priori implicitly defined by \eqref{eq:def_F}, it turns out that its Fr\'{e}chet derivative can be explicitly computed by noting that
\[
D\mathcal{F} (\phi) = D_\phi \mathcal{T} (\mathcal{F}(\phi), \phi ) = [D_1 \mathcal{T} (\mathcal{F}(\phi), \phi )] D\mathcal{F}(\phi) + D_2 \mathcal{T} (\mathcal{F}(\phi), \phi ),
\]
where
\[
\begin{aligned}
\relax [ [ D_1 \mathcal{T} (\mathcal{F}(\phi), \phi )] u](s) &= \int_{-\tau}^s [D_1 f([\mathcal{F}(\phi)](s'), \phi(s'))] u(s') \, ds', \\
\relax [ [ D_2 \mathcal{T} (\mathcal{F}(\phi), \phi )] u](s) &= u(0) + \int_{-\tau}^s [D_2 f([\mathcal{F}(\phi)](s'), \phi(s'))] u(s') \, ds',
\end{aligned}
\qquad \text{for all } u \in C([-\tau, 0], \mathbb{R}^n),
\]
Since $D_1 \mathcal{T} (\mathcal{F}(\phi), \phi )$ is compact, $I - D_1 \mathcal{T} (\mathcal{F}(\phi), \phi )$ is invertible if and only if it is injective.
We have that $(I - D_1 \mathcal{T} (\mathcal{F}(\phi), \phi )) u = 0$ is the integral form of a linear ODE. By uniqueness of the solution, it follows that $u$ is identically zero. Therefore, the injectivity holds and so does the equality
\[
D\mathcal{F} (\phi) = (I - D_1 \mathcal{T} (\mathcal{F}(\phi), \phi ) )^{-1} D_2 \mathcal{T} (\mathcal{F}(\phi), \phi ).
\]

According to Lemma \ref{lem:equivalence_eig}, the spectrum and eigenspaces of a $m$-periodic orbit $c_j$, for $j = 1, \dots, m$, of the DDS \eqref{eq:dds} can be equivalently studied with $[D_\phi \mathcal{F}^m ( \phi )]_{\phi = c_j}$, for some $j \in \{1, \dots, m\}$, or $D\mathring{\mathcal{F}} (c)$, with $c \bydef (c_1, \dots, c_m)$ being a fixed-point of the DDS \eqref{eq:unrolled_dds}.
Despite the fact that we compute the periodic orbit as a fixed-point of $\mathring{\mathcal{F}}$ (cf. Section \ref{sec:periodic_orbit}), we favour looking at the eigendecomposition of $[D_\phi \mathcal{F}^m ( \phi )]_{\phi = c_j}$.
The reason is that in practice, numerical errors tend to tarnish the eigenvalues accumulating to $0$. The spectrum of $D\mathring{\mathcal{F}} (c)$ is then laborious to parse since it contains $m$ copies, coming from the $m$-th root, of each eigenvalue of $[D_\phi \mathcal{F}^m ( \phi )]_{\phi = c_j}$.

By using the chain rule, we can then compute the eigendecomposition of $D\mathcal{F}^m(c_j)$.
More precisely, let $\mathbf{c} = (\mathbf{c}_1, \dots, \mathbf{c}_m) \in ((\ell^1_\nu)^n)^m$ where $\mathbf{c}_j$ is the sequence of Chebyshev coefficients representing $c_j$. Define
\begin{align*}
\hat{\mathbf{H}}_j(\tau, \mathbf{c}) &\bydef (\mathbf{I} - \mathbf{S} [\tfrac{\tau}{2} D_1 f(\mathbf{c}_{j+1}, \mathbf{c}_j)] )^{-1} ( \mathbf{E} + \mathbf{S} [\tfrac{\tau}{2} D_2 f(\mathbf{c}_{j+1}, \mathbf{c}_j)] ), \qquad j = 1, \dots, m-1, \\
\hat{\mathbf{H}}_m(\tau, \mathbf{c}) &\bydef (\mathbf{I} - \mathbf{S} [\tfrac{\tau}{2} D_1 f(\mathbf{c}_1, \mathbf{c}_m)] )^{-1} ( \mathbf{E} + \mathbf{S} [\tfrac{\tau}{2} D_2 f(\mathbf{c}_1, \mathbf{c}_m)] ).
\end{align*}
Consequently, for $j = 1, \dots, m$, we have that $[D_{\phi} \mathcal{F}^m ( \phi )]_{\phi = c_j}$ is represented by
\begin{equation}\label{eq:def_H}
\mathbf{H}_j(\tau, \mathbf{c}) \bydef \hat{\mathbf{H}}_{j-1}(\tau, \mathbf{c}) \cdot \hat{\mathbf{H}}_{j-2}(\tau, \mathbf{c}) \cdot \ldots \cdot \hat{\mathbf{H}}_1(\tau, \mathbf{c}) \cdot \hat{\mathbf{H}}_m(\tau, \mathbf{c}) \cdot \hat{\mathbf{H}}_{m-1}(\tau, \mathbf{c}) \cdot \ldots \cdot \hat{\mathbf{H}}_j(\tau, \mathbf{c}).
\end{equation}

Suppose we have a Floquet multiplier $\lambda^m$ of $[D_\phi \mathcal{F}^m ( \phi )]_{\phi = c_j}$, for some $j \in \{1, \dots, m\}$.
In the forthcoming Section \ref{sec:unstable_manifold}, to parameterize the local unstable manifold, we will need to retrieve the eigenvector of $D\mathring{\mathcal{F}} (c)$ associated with $\lambda$.
According to Point 3 of Lemma \ref{lem:equivalence_eig}, this eigenvector is represented by $\mathbf{v} = (\mathbf{v}_1, \dots, \mathbf{v}_m) \in ((\ell^1_\nu)^n)^m$ where $\mathbf{v}_j$ is the eigenvector of $\mathbf{H}_j$ associated with $\lambda^m$ such that
\begin{equation}\label{eq:eigenvector}
\begin{aligned}
\relax [\mathbf{H}_1 (\tau, \mathbf{c})] \mathbf{v}_1 &= \lambda^m \mathbf{v}_1, \\
\mathbf{v}_2
&= \lambda^{-1} [\hat{\mathbf{H}}_1 (\tau, \mathbf{c})] \mathbf{v}_1, \\
\mathbf{v}_j
&= \lambda^{-1} [\hat{\mathbf{H}}_{j-1} (\tau, \mathbf{c})] \mathbf{v}_{j-1}
= \lambda^{-(j-1)} [\hat{\mathbf{H}}_{j-1} (\tau, \mathbf{c})] \cdot \ldots \cdot [\hat{\mathbf{H}}_1 (\tau, \mathbf{c})] \mathbf{v}_1,
\qquad j = 3, \dots, m.
\end{aligned}
\end{equation}

So far, we have characterized the Floquet multipliers and eigenspaces for the polynomial DDE \eqref{eq:dde}.
However, Lemma \ref{lem:equivalence_eig} does not hint on what the spectrum and eigenspaces are for the DDE \eqref{eq:dde_elem} when the DDE \eqref{eq:dde} is an auxiliary polynomial DDE of \eqref{eq:dde_elem}.
According to Theorem 3.5 in \cite{Henot2021}, if the periodic orbit of the auxiliary polynomial DDE \eqref{eq:dde} represents a periodic orbit of the DDE \eqref{eq:dde_elem}, then the stable and unstable Floquet multipliers are identical and the associated eigenvectors coincide.
On the other hand, the $d$ additional coordinates in the construction of $f$, given in \ref{eq:f_poly}, introduce $d$ center Floquet multipliers whose eigenvectors do not pertain to the original DDE \eqref{eq:dde_elem}; in particular, if the periodic orbit is hyperbolic, then $\dim E_\textnormal{c} = 1 + d$.
This last claim can be made rigorous from the results presented in \cite{Henot2021}; although, for the needs of this article, we shall be satisfied by observing this numerically.

\subsection{Numerical considerations}
\label{sec:eigendecomposition_num}

Consider an approximate zero $\bar{\tau}>0$, $\bar{\mathbf{c}} = (\bar{\mathbf{c}}_1, \dots, \bar{\mathbf{c}}_m) \in \boldsymbol{\pi}^N ((\ell^1_\nu)^n)^m$ of $\mathbf{F}_\circ$, or $\mathbf{F}_{\circ,\textnormal{elem}}$ (cf. Section \ref{sec:periodic_orbit_num}).
Then, for $j = 1, \dots, m$, we have that the operator $\mathbf{H}_j$ given in \eqref{eq:def_H} is approximated by
\begin{equation}\label{eq:def_H_num}
\mathbf{H}_j^N(\bar{\tau}, \bar{\mathbf{c}}) \bydef \hat{\mathbf{H}}_{j-1}^N(\bar{\tau}, \bar{\mathbf{c}}) \cdot \hat{\mathbf{H}}_{j-2}^N(\bar{\tau}, \bar{\mathbf{c}}) \cdot \ldots \cdot \hat{\mathbf{H}}_1^N(\bar{\tau}, \bar{\mathbf{c}}) \cdot \hat{\mathbf{H}}_m^N(\bar{\tau}, \bar{\mathbf{c}}) \cdot \hat{\mathbf{H}}_{m-1}^N(\bar{\tau}, \bar{\mathbf{c}}) \cdot \ldots \cdot \hat{\mathbf{H}}_j^N(\bar{\tau}, \bar{\mathbf{c}}),
\end{equation}
where
\begin{align*}
\hat{\mathbf{H}}_j^N(\bar{\tau}, \bar{\mathbf{c}}) &\bydef (\boldsymbol{\pi}^N - \boldsymbol{\pi}^N \mathbf{S} [\tfrac{\bar{\tau}}{2} D_1 f(\bar{\mathbf{c}}_{j+1}, \bar{\mathbf{c}}_j)] \boldsymbol{\pi}^N )^{-1} ( \mathbf{E} \boldsymbol{\pi}^N + \boldsymbol{\pi}^N \mathbf{S} [\tfrac{\bar{\tau}}{2} D_2 f(\bar{\mathbf{c}}_{j+1}, \bar{\mathbf{c}}_j)] \boldsymbol{\pi}^N ), \quad j = 1, \dots, m-1, \\
\hat{\mathbf{H}}_m^N(\bar{\tau}, \bar{\mathbf{c}}) &\bydef (\boldsymbol{\pi}^N - \boldsymbol{\pi}^N \mathbf{S} [\tfrac{\bar{\tau}}{2} D_1 f(\bar{\mathbf{c}}_1, \bar{\mathbf{c}}_m)] \boldsymbol{\pi}^N )^{-1} ( \mathbf{E} \boldsymbol{\pi}^N + \boldsymbol{\pi}^N \mathbf{S} [\tfrac{\bar{\tau}}{2} D_2 f(\bar{\mathbf{c}}_1, \bar{\mathbf{c}}_m)] \boldsymbol{\pi}^N ).
\end{align*}

Clearly, $\mathbf{H}_j^N (\bar{\tau}, \bar{\mathbf{c}})$ can only provide a finite portion of the spectrum of $[D_\phi \mathcal{F}^m ( \phi )]_{\phi = c_j}$, namely $n(N+1)$ many eigenvalues.
Nevertheless, the missing eigenvalues get closer to $0$ as the truncation order $N$ increases.
To give some insights, we sketch the argument.
The operators $\mathbf{H}_j^N$ define a sequence, with respect to $N$, of finite rank operators converging in norm (inherited from $\ell^1_\nu$) to $\mathbf{H}_j$ whose spectrum is identical to the one of $[D_\phi \mathcal{F}^m ( \phi )]_{\phi = c_j}$. It turns out that for any $\zeta \in [0, 1]$, the eigenvalues of the homotopy $\zeta \mapsto (1-\zeta)\mathbf{H}_j^N(\tau, \mathbf{c}) + \zeta \mathbf{H}_j (\tau, \mathbf{c})$, which are not eigenvalues of $\mathbf{H}_j^N(\tau, \mathbf{c})$, are contained in a disk in $\mathbb{C}$ which shrinks as $N$ grows.
The interested reader may refer to \cite{Lessard2020}, and references therein, for more details as well as an application of this property to rigorously validate the spectrum of equilibria for DDEs.

Furthermore, the eigenvectors $\mathbf{v}_1, \dots, \mathbf{v}_m$ given in \eqref{eq:eigenvector} are approximated by $\bar{\mathbf{v}}_1, \dots, \bar{\mathbf{v}}_m \in \boldsymbol{\pi}^N (\ell^1_\nu)^n$ satisfying
\begin{equation}\label{eq:eigenvector_num}
[\mathbf{H}_1^N (\bar{\tau}, \bar{\mathbf{c}})] \bar{\mathbf{v}}_1 = \bar{\lambda}^m \bar{\mathbf{v}}_1, \qquad
\bar{\mathbf{v}}_j
= \bar{\lambda}^{-1} [\hat{\mathbf{H}}^N_{j-1} (\bar{\tau}, \bar{\mathbf{c}})] \bar{\mathbf{v}}_{j-1}, \quad j = 2, \dots, m.
\end{equation}

We conclude this section by detailing the computation of the eigendecomposition associated with the $m\tau$-periodic orbit obtained for the cubic Ikeda equation \eqref{eq:cubic_ikeda} (cf. Section \ref{sec:cubic_ikeda_po}).

\subsection{Example: eigendecomposition for the cubic Ikeda equation}
\label{sec:cubic_ikeda_eigendecomposition}

For the cubic Ikeda equation \eqref{eq:cubic_ikeda}, $f(\mathbf{a}, \mathbf{b})$ is independent of $\mathbf{a}$, such that the operators $\hat{\mathbf{H}}_j^N$ reduce to
\begin{align*}
\hat{\mathbf{H}}_j^N (\tau, \mathbf{c}) &= \mathbf{E} \boldsymbol{\pi}^N + \boldsymbol{\pi}^N \mathbf{S} ( \tfrac{\tau}{2} [\mathbf{I} - 3\mathbf{M}_{\mathbf{c}_j^{*2}}] ) \boldsymbol{\pi}^N, \qquad j = 1, \dots, m-1, \\
\hat{\mathbf{H}}_m^N (\tau, \mathbf{c}) &= \mathbf{E} \boldsymbol{\pi}^N + \boldsymbol{\pi}^N \mathbf{S} ( \tfrac{\tau}{2} [\mathbf{I} - 3\mathbf{M}_{\mathbf{c}_m^{*2}}] ) \boldsymbol{\pi}^N.
\end{align*}

\begin{figure}
\centering
\includegraphics[height=4cm]{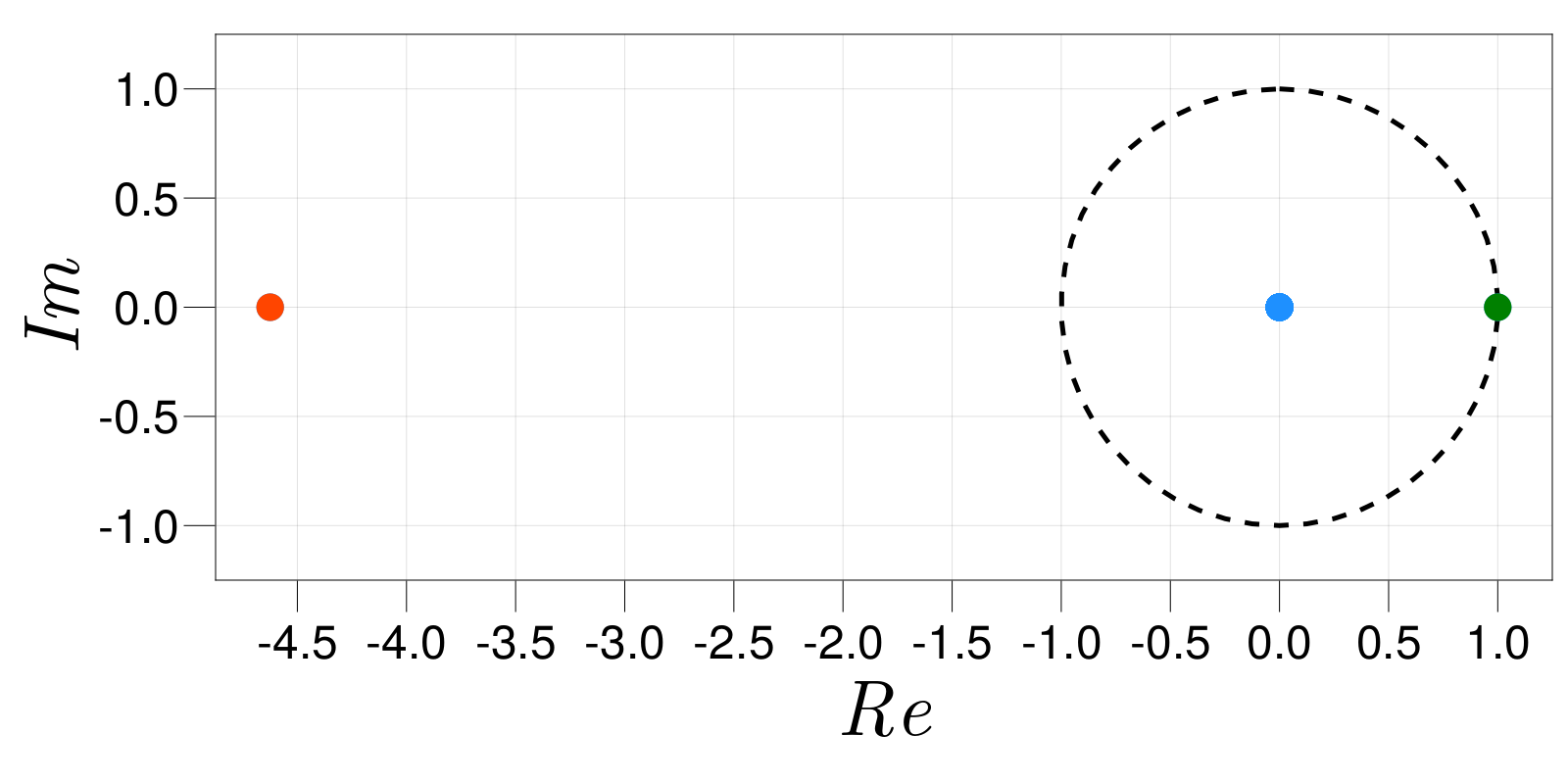}
\vspace{-.3cm}
\caption{Collection of $N+1 = 31$ Floquet multipliers associated with the $m\tau$-periodic orbit shown on Figure \ref{fig:cubic_ikeda_po} for the cubic Ikeda equation.
The black dashed circle is the unit circle.
There is $1$ unstable eigenvalue (red dot), $1$ centre eigenvalue (green dot) and $29$ stable eigenvalues (blue dots).
Note that due to the proximity of the stable eigenvalues, only a single blue dot appears on the figure.}
\label{fig:cubic_ikeda_spectrum}
\end{figure}

We numerically retrieve the spectrum of $\mathbf{H}^N_1 (\bar{\tau}_\textnormal{init}, \bar{\mathbf{c}}_\textnormal{init})$, given in \eqref{eq:def_H_num}, with $\bar{\tau}_\textnormal{init} > 0$ and $\bar{\mathbf{c}}_\textnormal{init} \in \boldsymbol{\pi}^N (\ell^1_\nu)^m$ computed in Section \ref{sec:cubic_ikeda_po}; in particular, here $m=8$ and $N = 30$.
The numerical spectrum consists of $N+1 = 31$ eigenvalues; Figure \ref{fig:cubic_ikeda_spectrum} suggests that the periodic orbit has a single unstable Floquet multiplier $\bar{\mu} \approx -4.624622928960324$.
We choose arbitrarily one of the $m$-th root $\bar{\lambda} = |\bar{\mu}|^{\frac{1}{m}} e^{i\frac{\pi}{m}}$.
For $j=1, \dots, m$, we consider the approximate unstable eigenvector $\bar{\mathbf{v}}_j \in \boldsymbol{\pi}^N \ell^1_\nu$ of $\mathbf{H}^N_j (\bar{\tau}_\textnormal{init}, \bar{\mathbf{c}}_\textnormal{init})$ associated with $\bar{\lambda}^m$ as given by \eqref{eq:eigenvector_num}. Specifically, we have
\[
[\mathbf{H}_1^N (\bar{\tau}_\textnormal{init}, \bar{\mathbf{c}}_\textnormal{init})] \bar{\mathbf{v}}_1 = \bar{\lambda}^m \bar{\mathbf{v}}_1, \qquad
\bar{\mathbf{v}}_j
= \bar{\lambda}^{-1} (\mathbf{E} \boldsymbol{\pi}^N + \boldsymbol{\pi}^N \mathbf{S} ( \tfrac{\bar{\tau}_\textnormal{init}}{2} [\mathbf{I} - 3\mathbf{M}_{(\bar{\mathbf{c}}_\textnormal{init})_{j-1}^{*2}}] ) \boldsymbol{\pi}^N) \bar{\mathbf{v}}_{j-1}, \quad j = 2, \dots, m.
\]
%

%%%%%%%%%%%%%%%%%%%%%%%
%% UNSTABLE MANIFOLD %%
%%%%%%%%%%%%%%%%%%%%%%%

\section{Computation of the unstable manifold}\label{sec:unstable_manifold}
%!TEX root = poincare_scenario_dde.tex

To construct a homoclinic orbit for a periodic solution, we first compute the unstable manifold via the \emph{parameterization method}.
The essential references for the parameterization method are the three articles \cite{Cabre2003, Cabre2003-2, Cabre2005}. The parameterization method was extended to equilibrium and periodic orbits of DDEs in \cite{Groothedde2017, Henot2022}, and in the last reference just cited the authors obtain validated, computer assisted bounds on the discretization and truncation errors.
We refer also to the works of \cite{MR3706909,MR4446093}, where parameterization methods for computing stable/unstable manifolds attached to periodic orbits of explicitly and implicitly defined finite dimensional discrete time dynamical system are developed.  The technique developed below extends this work to infinite dimensional, implicitly defined, compact maps.

Loosely speaking, the idea is to find a parameterization characterized as a mapping lifting the trajectories from the unstable eigenspace onto the unstable manifold.
An appeal of this method is that, under a \emph{non-resonance} condition for the eigenvalues, the resulting parameterization is not constrained to be a local graph.

Let $\phi \in C([-\tau, 0], \mathbb{R}^n)$ and $t \in \mathbb{R}/m\tau\mathbb{Z} \mapsto S_t (\phi)$ be a $m\tau$-periodic orbit of the DDE \eqref{eq:dde}.
Assume that $D S_{m\tau}(\phi)$ has no defective unstable eigenvalues $\lambda_1^m, \dots, \lambda_{n_\textnormal{u}}^m \in \mathbb{C}$ (i.e. the associated eigenvectors are linearly independent and span $E_\textnormal{u}$).
Consider the diagonal matrix
\[
\Lambda \bydef \begin{pmatrix}
\lambda_1 & & 0 \\
& \ddots & \\
0 & & \lambda_{n_\textnormal{u}}
\end{pmatrix}.
\]
We look for a mapping $P : (\mathbb{R}/m\tau\mathbb{Z}) \times \mathbb{C}^{n_\textnormal{u}} \to \mathcal{C}^n$ which acts as a topological conjugacy between the semi-flow of the DDE \eqref{eq:dde} and the corresponding linearized flow about the $m\tau$-periodic orbit, namely
\begin{equation}\label{eq:conjugacy}
P(t + \theta~(\text{mod}~m\tau), \Lambda^{t/\tau} \sigma) = S_t ( P(\theta, \sigma) ),
\end{equation}
where $t \in \mathbb{R}$, $\theta \in \mathbb{R}/m\tau\mathbb{Z}$ and $\sigma \in \mathbb{C}^{n_\textnormal{u}}$. Note that if \eqref{eq:conjugacy} holds, then the solution operator $S_t$ defines a flow, i.e. is well-defined for $t < 0$, on the image of $P$. Moreover, evaluating at $\sigma = 0$ shows that $t \mapsto P(t + \theta~(\text{mod}~m\tau), 0)$ is a periodic orbit of the DDE \eqref{eq:dde}; it also follows that trajectories on the image of $P$ goes to this periodic orbit as $t \to -\infty$. In other words, the conjugacy $P$, if it exists, yields a parameterization of the unstable manifold.

Note that we slightly constrained the image of $P$ to $\mathcal{C}^n$ instead of $C([-\tau, 0],\mathbb{R}^n)$ since we wish to deal with the phase space of the DDE \eqref{eq:dde}, the DDSs \eqref{eq:dds} and \eqref{eq:unrolled_dds} altogether.

The question of existence of $P$ is especially relevant in the context of DDEs where it is common for solution operators to not be one-to-one.
In such cases, the unstable manifold may collapse into a lower dimensional manifold.
However, even then, there exists a small enough neighbourhood of the periodic orbit for which the local unstable manifold is a graph over the unstable eigenspace.
Hence, $P$ would be, at best, a local graph of the unstable manifold.
Yet, we stress that $P$ does not have to be a graph.
In fact, for analytic DDEs (as in the context of this article) trajectories on the unstable manifold are analytic, the one-to-oneness of the solution operator on this manifold is guaranteed and the unstable manifold is analytic.
See for instance \cite{Hale1977}.

Then, it is known (see \cite{Cabre2003,Cabre2003-2,Cabre2005}) that a mapping $P$ satisfying the conjugacy relation \eqref{eq:conjugacy} always exists provided that the eigenvalues $\lambda_1^m, \dots, \lambda_{n_\textnormal{u}}^m$ are non-resonant; the definition of non-resonant eigenvalues is reported below.

\begin{definition}\label{def:resonance}
A collection $\mu_1, \dots, \mu_K \in \mathbb{C}$ of eigenvalues are non-resonant whenever the equalities
\[
\mu_1^{\alpha_1} \times \ldots \times \mu_K^{\alpha_K} = \mu_j, \qquad j = 1, \dots, K,
\]
for $\alpha_1, \dots, \alpha_K \in \mathbb{N} \cup \{ 0 \}$, only hold for the trivial case $\alpha_j = 1$, $\alpha_l = 0$ for all $l = 1, \dots, K$, $l\neq j$.
\end{definition}

The following lemma details the relation between the parameterization of the local unstable manifold of a periodic orbit of the DDE \eqref{eq:dde} and the DDSs \eqref{eq:dds} and \eqref{eq:unrolled_dds}.

\begin{lemma}\label{lem:equivalence_param}
Let $\tau > 0$, $m \in \mathbb{N}$, $\phi \in C([-\tau, 0], \mathbb{R}^n)$, $\lambda_1, \dots, \lambda_{n_\textnormal{u}} \in \mathbb{C}$, $\Lambda \bydef \textnormal{diag}(\lambda_1, \dots, \lambda_{n_\textnormal{u}})$ and $U$ be an open subset of $\mathbb{C}^{n_\textnormal{u}}$. Assume that $t \in \mathbb{R}/m\tau\mathbb{Z} \mapsto S_t (\phi)$ is a $m\tau$-periodic orbit of the DDE \eqref{eq:dde} with non-resonant unstable Floquet multipliers $\lambda_1^m, \dots, \lambda_{n_\textnormal{u}}^m$. The following statements are equivalent:
\begin{enumerate}
\item $P : (\mathbb{R}/m\tau\mathbb{Z}) \times U \to \mathcal{C}^n$ parameterizes a local unstable manifold and satisfies the conjugacy relation $P(t + \theta~(\text{mod}~m\tau), \Lambda^{t/\tau} \sigma) = S_t ( P(\theta, \sigma))$.
\item $\mathcal{P}_1, \dots, \mathcal{P}_m : U \to \mathcal{C}^n$ parameterizes a local unstable manifold and satisfies the conjugacy relations $\mathcal{P}_1 (\Lambda \sigma) = \mathcal{F} (\mathcal{P}_m (\sigma))$ and $\mathcal{P}_j (\Lambda \sigma) = \mathcal{F} (\mathcal{P}_{j-1} (\sigma))$ for $j = 2, \dots, m$.
\item $\mathcal{P} : U \to (\mathcal{C}^n)^m$ parameterizes a local unstable manifold and satisfies the conjugacy relation $\mathcal{P} (\Lambda \sigma) = \mathring{\mathcal{F}} (\mathcal{P}(\sigma))$.
\end{enumerate}
\end{lemma}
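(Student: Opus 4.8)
The plan is to establish the equivalences by translating the single conjugacy relations into each other, using the correspondences already developed in Lemmas \ref{lem:equivalence_dde_dds}, \ref{lem:equivalence_po_fp} and \ref{lem:equivalence_eig}, together with the definitions of $\mathcal{F}$ \eqref{eq:def_F} and $\mathring{\mathcal{F}}$ \eqref{eq:unrolled_dds}. The whole point is that the three conjugacy relations merely describe the same geometric object (a local unstable manifold of the periodic orbit) viewed in the phase spaces of the DDE, the DDS, and the unrolled DDS respectively, and that these three viewpoints are dictionary-equivalent.

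First I would prove the implication (1) $\Rightarrow$ (2). Given $P$, set $\theta_j \bydef (j-1)\tau$ and define $\mathcal{P}_j(\sigma) \bydef P(\theta_j, \sigma)$ for $j = 1, \dots, m$. Evaluating the conjugacy \eqref{eq:conjugacy} at $\theta = \theta_{j-1}$ and $t = \tau$ gives $P(\theta_j \,(\text{mod}\,m\tau), \Lambda \sigma) = S_\tau(P(\theta_{j-1},\sigma)) = \mathcal{F}(P(\theta_{j-1},\sigma))$ by the definition of $\mathcal{F}$ as the time-$\tau$ map; this reads $\mathcal{P}_j(\Lambda\sigma) = \mathcal{F}(\mathcal{P}_{j-1}(\sigma))$ for $j = 2, \dots, m$, and for $j = 1$ the index $\theta_0 = -\tau$ produces $\theta_1 \equiv 0$ while $S_\tau(P(-\tau,\sigma)) = S_\tau(S_{(m-1)\tau}(P(0,\sigma)))$ by $m\tau$-periodicity, so $\mathcal{P}_1(\Lambda\sigma) = \mathcal{F}(\mathcal{P}_m(\sigma))$, as required. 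Conversely, for (2) $\Rightarrow$ (1), I would reconstruct $P$ on the grid points $\theta = \theta_j$ by $P(\theta_j,\sigma) \bydef \mathcal{P}_j(\sigma)$ and on intermediate times $t + \theta_j$ with $0 \le t < \tau$ by $P(t+\theta_j\,(\text{mod}\,m\tau), \Lambda^{t/\tau}\sigma) \bydef S_t(\mathcal{P}_j(\sigma))$; one checks this is well-defined (consistency at grid points follows from the DDS conjugacy relations) and that it satisfies \eqref{eq:conjugacy} for all $t \in \mathbb{R}$, using that $S_t$ extends to negative times on the image — which is exactly the content of the remark following \eqref{eq:conjugacy} that the conjugacy forces $S_t$ to be a genuine flow there.

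Next I would handle (2) $\Leftrightarrow$ (3), which is essentially bookkeeping of the same type already carried out for eigenvectors in Point 3 of Lemma \ref{lem:equivalence_eig} and in \eqref{eq:eigenvector}. Given $\mathcal{P}_1, \dots, \mathcal{P}_m$ as in (2), define $\mathcal{P}(\sigma) \bydef (\mathcal{P}_1(\sigma), \dots, \mathcal{P}_m(\sigma)) \in (\mathcal{C}^n)^m$. Then by the block structure of $\mathring{\mathcal{F}}$ in \eqref{eq:unrolled_dds},
\[
\mathring{\mathcal{F}}(\mathcal{P}(\sigma)) = \big(\mathcal{F}(\mathcal{P}_m(\sigma)), \mathcal{F}(\mathcal{P}_1(\sigma)), \dots, \mathcal{F}(\mathcal{P}_{m-1}(\sigma))\big) = \big(\mathcal{P}_1(\Lambda\sigma), \mathcal{P}_2(\Lambda\sigma), \dots, \mathcal{P}_m(\Lambda\sigma)\big) = \mathcal{P}(\Lambda\sigma),
\]
and the reverse direction just reads off the components of $\mathcal{P}$. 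In each direction one must also verify that the object "parameterizes a local unstable manifold" — i.e. that $\mathcal{P}$ (resp.\ each $\mathcal{P}_j$) is an immersion near $\sigma = 0$ tangent to the appropriate unstable eigenspace and that its image lies in the unstable set — but this transfers along the correspondences for the fixed point and its Floquet data supplied by Lemmas \ref{lem:equivalence_po_fp} and \ref{lem:equivalence_eig}: differentiating the respective conjugacy at $\sigma = 0$ identifies the tangent data, and the $t \to -\infty$ (resp.\ iterate $\to -\infty$) convergence to the periodic orbit is inherited since $S_{j\tau} = \mathcal{F}^j$ on these points.

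The main obstacle I anticipate is not the algebraic shuffling but the careful treatment of \emph{well-definedness and regularity} when reconstructing $P$ from the discrete data in (2) $\Rightarrow$ (1): one must check that the formula $P(t + \theta_j\,(\text{mod}\,m\tau), \Lambda^{t/\tau}\sigma) = S_t(\mathcal{P}_j(\sigma))$ does not depend on the representation of a given point $(\theta, \sigma)$ (there is a genuine ambiguity because $\Lambda^{t/\tau}$ and the shift $t$ appear coupled, and $U$ must be shrunk so that iterating stays in the domain), that the resulting $P$ is continuous — indeed analytic — in $(\theta,\sigma)$, and that $S_t$ really is defined for $t<0$ on $\mathrm{image}(P)$. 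This last point relies on injectivity of the semi-flow on the unstable manifold, which for analytic $f$ is available (cf.\ the discussion citing \cite{Hale1977} preceding Definition \ref{def:resonance}); I would invoke that rather than reprove it. Everything else — the passage between the DDE, the DDS and the unrolled DDS — is a routine consequence of $\mathcal{F} = S_\tau$, the block form of $\mathring{\mathcal{F}}$, and $m\tau$-periodicity, and needs only to be written out cleanly.
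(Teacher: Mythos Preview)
Your proposal is correct and follows essentially the same approach as the paper: define $\mathcal{P}_j(\sigma) = P((j-1)\tau,\sigma)$ for (1)$\Rightarrow$(2), reconstruct $P$ from the $\mathcal{P}_j$ via the semi-flow for (2)$\Rightarrow$(1), and read off the block structure of $\mathring{\mathcal{F}}$ for (2)$\Leftrightarrow$(3). The one noteworthy difference is that for (2)$\Rightarrow$(1) the paper writes a single global formula $P(\theta,\sigma) \bydef S_\theta(\mathcal{P}_1(\Lambda^{-\theta/\tau}\sigma))$ rather than your piecewise definition indexed by $j$; this sidesteps the consistency checks at the grid points that you flag as the main obstacle, since well-definedness modulo $m\tau$ follows directly from $\mathcal{F}^m(\mathcal{P}_1(\sigma)) = \mathcal{P}_1(\Lambda^m\sigma)$.
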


\begin{proof}
Assuming Point 1 holds, for $j = 1, \dots, m$, define $\mathcal{P}_j (\sigma) \bydef P ((j-1)\tau, \sigma)$.
By construction, the images of $\mathcal{P}_1, \dots, \mathcal{P}_m$ cover the local unstable manifold of the periodic orbit.
According to the conjugacy relation $P(t + \theta~(\text{mod}~m\tau), \Lambda^{t/\tau} \sigma) = S_t ( P(\theta, \sigma) )$, it follows that
\[
\mathcal{P}_1 (\Lambda \sigma)
= P (0, \Lambda \sigma)
= S_\tau (P((m-1)\tau, \sigma))
= S_\tau (\mathcal{P}_m (\sigma))
= \mathcal{F} (\mathcal{P}_m (\sigma)).
\]
Repeating this argument for $j = 2, \dots, m$, we obtain $\mathcal{P}_j (\Lambda \sigma) = \mathcal{F} (\mathcal{P}_{j-1} (\sigma))$ as desired.

Conversely, if Point 2 holds, then for any $\theta \in \mathbb{R}/m\tau\mathbb{Z}$, define $P(\theta, \sigma) \bydef S_\theta ( \mathcal{P}_1(\Lambda^{-\theta/\tau} \sigma) )$.
From the conjugacy relations satisfied by $\mathcal{P}_j$, for $j = 1, \dots, m$, we have that
\[
P((j-1)\tau, \sigma) = S_{(j-1)\tau} ( \mathcal{P}_1(\Lambda^{-(j-1)} \sigma) ) = \mathcal{F}^{j-1} (\mathcal{P}_1(\Lambda^{-(j-1)} \sigma)) = \mathcal{P}_j(\sigma).
\]
Thus, by construction, the image of $P$ covers the local unstable manifold of the periodic orbit.
Now, given $t \in \mathbb{R}$ and $\theta \in \mathbb{R}/m\tau\mathbb{Z}$, we have
\[
P(t + \theta~(\text{mod}~m\tau), \Lambda^{t/\tau} \sigma)
= S_{t + \theta} ( \mathcal{P}_1 (\Lambda^{-(t + \theta)/\tau} \Lambda^{t/\tau} \sigma) )
= S_t( S_\theta ( \mathcal{P}_1 (\Lambda^{-\theta/\tau} \sigma) ) )
= S_t( P(\theta, \sigma) ).
\]

Lastly, Point 2 is equivalent to Point 3 from the equality
\[
\mathcal{P} (\Lambda \sigma)
= \begin{pmatrix}
\mathcal{P}_1 (\Lambda \sigma) \\ \mathcal{P}_2 (\Lambda \sigma) \\ \vdots \\ \mathcal{P}_m (\Lambda \sigma)
\end{pmatrix}
= \begin{pmatrix}
\mathcal{F} ( \mathcal{P}_m(\sigma) ) \\ \mathcal{F} ( \mathcal{P}_1(\sigma) ) \\ \vdots \\ \mathcal{F} ( \mathcal{P}_{m-1}(\sigma) )
\end{pmatrix}
= \mathring{\mathcal{F}} (\mathcal{P}(\sigma)). \qedhere
\]
\end{proof}

As we computed the $m\tau$-periodic orbit of the DDE \eqref{eq:dde} by working with the DDS \eqref{eq:unrolled_dds}, we shall retrieve its unstable manifold in the framework of the DDS \eqref{eq:unrolled_dds}.
From the equivalence between Point 1 and Point 3 of Lemma \ref{lem:equivalence_param}, the resulting unstable manifold can be expressed in the context of the DDE \eqref{eq:dde}.
Henceforth, we consider $\mathcal{P}$, as described in Point 3 of Lemma \ref{lem:equivalence_param}, satisfying the conjugacy relation
\begin{equation}\label{eq:unrolled_conjugacy}
\mathcal{P} (\Lambda \sigma) = \mathring{\mathcal{F}} (\mathcal{P}(\sigma)).
\end{equation}

The parameterization $\mathcal{P}$ can always be written as an analytic function on $\mathbb{D}^{n_\textnormal{u}}$, where $\mathbb{D} \bydef \{ z \in \mathbb{C} \, : \, |z| < 1\}$ is the unit open disk in the complex plane.
Indeed, let
\[
[\mathcal{P} (\sigma)](s) = \sum_{|\alpha| \ge 0} \frac{1}{\alpha!} [D_\sigma^\alpha \mathcal{P}(\sigma)]_{\sigma = 0} (s) \sigma^\alpha, \qquad \text{for all } s \in [-\tau, 0], \, \max_{i = 1, \dots, n_\textnormal{u}}|\sigma_i| < \gamma,
\]
where $\gamma > 0$ is the radius of convergence of the series, $| \alpha | = \alpha_1 + \ldots + \alpha_{n_\textnormal{u}}$, $\sigma^\alpha = \sigma_1^{\alpha_1} \times \ldots \sigma_{n_\textnormal{u}}^{\alpha_{n_\textnormal{u}}}$, $\alpha! = \alpha_1 ! \times \ldots \times \alpha_{n_\textnormal{u}}!$ and $D_\sigma^\alpha = D_{\sigma_1}^{\alpha_1} \ldots D_{\sigma_{n_\textnormal{u}}}^{\alpha_{n_\textnormal{u}}}$.
We shall carry on this standard multi-indices notation throughout this article.
Then, the conjugacy relation \eqref{eq:unrolled_conjugacy} yields
\[
\mathcal{P}(0) = \mathring{\mathcal{F}}( \mathcal{P}(0) ) \qquad \text{and} \qquad
\lambda_l [D_\sigma^{e_l} \mathcal{P}(\sigma)]_{\sigma = 0} = [D \mathring{\mathcal{F}}(  \mathcal{P}(0) )] [D_\sigma^{e_l} \mathcal{P}(\sigma)]_{\sigma = 0}, \quad l = 1, \dots, n_\textnormal{u},
\]
where $( e_l )_k$ is the \emph{Kronecker delta}, that is $( e_l )_k = 1$ if $k = l$ and $0$ otherwise.
Thus, the zero-th order (i.e. $|\alpha|=0$) Taylor coefficient is a fixed-point of the DDS \eqref{eq:unrolled_dds} and each order 1 (i.e. each $|\alpha|=1$) Taylor coefficient corresponds to an unstable eigenvector. Moreover, from the Fa\`{a} di Bruno formula \cite{Constantine1996}, we obtain
\begin{align*}
&\frac{1}{\alpha!} [D_\sigma^\alpha \mathring{\mathcal{T}} ( \mathcal{P}(\Lambda \sigma), \mathcal{P}(\sigma) )]_{\sigma = 0} \\
&= [D_1 \mathring{\mathcal{T}} ( \mathcal{P}(0), \mathcal{P}(0) )] \Big( \frac{\lambda^\alpha}{\alpha!} [D_\sigma^\alpha \mathcal{P}(\sigma)]_{\sigma = 0} \Big) + [D_2 \mathring{\mathcal{T}} ( \mathcal{P}(0), \mathcal{P}(0) )] \Big( \frac{1}{\alpha!} [D_\sigma^\alpha \mathcal{P}(\sigma)]_{\sigma = 0} \Big) + \mathcal{R}_\alpha ( \mathcal{P}(\Lambda \sigma), \mathcal{P}(\sigma) ),
\end{align*}
where $\mathcal{R}_\alpha ( \mathcal{P}(\Lambda \sigma), \mathcal{P}(\sigma) )$ only depends on the lower order Taylor coefficients $\frac{1}{\beta!} [D_\sigma^\beta \mathcal{P}(\sigma)]_{\sigma = 0}$ for $|\beta| < |\alpha|$. From this equality and the conjugacy relation \eqref{eq:unrolled_conjugacy}, it follows that the higher order Taylor coefficients are explicitly given by the formula
\begin{equation}\label{eq:rec_rel}
\frac{1}{\alpha!} [D_\sigma^\alpha \mathcal{P}(\sigma)]_{\sigma = 0} = \left( \lambda^\alpha I - \lambda^\alpha D_1 \mathring{\mathcal{T}}( \mathcal{P}(0), \mathcal{P}(0) ) - D_2 \mathring{\mathcal{T}}( \mathcal{P}(0), \mathcal{P}(0) ) \right)^{-1} \mathcal{R}_\alpha ( \mathcal{P}(\Lambda \sigma), \mathcal{P}(\sigma) ), \qquad |\alpha| \ge 2.
\end{equation}
Choosing the scaling of the eigenvectors to be $\gamma \left[\frac{\partial^{e_l}}{\partial \sigma^{e_l}} \mathcal{P}(\sigma)\right]_{\sigma = 0}$, for $l = 1, \dots, n_\textnormal{u}$, the recurrence relation \eqref{eq:rec_rel} generates the series
\[
\sum_{|\alpha| \ge 0} \frac{\gamma^{|\alpha|}}{\alpha!} [D_\sigma^\alpha \mathcal{P}(\sigma)]_{\sigma = 0}(s) \sigma^\alpha, \qquad \text{for all } s \in [-\tau, 0], \, \sigma \in \mathbb{D}^{n_\textnormal{u}},
\]
which is equal to $\mathcal{P}(\gamma \sigma)$. Hence, we have obtained a parameterization of the local unstable manifold, satisfying the conjugacy relation \eqref{eq:unrolled_conjugacy}, whose radius of convergence is $1$ as initially claimed.

\begin{remark}[Image of the parameterization and covering of the local unstable manifold]
\label{rem:param_image}
The parameterization of the unstable manifold may be complex-valued in the event of complex eigenvalues.
The unstable manifold is real and is covered by the image of the parameterization intersected with $C([-\tau, 0], \mathbb{R}^n)$.
The set of such values in the domain $\mathbb{D}^{n_\textnormal{u}}$ which yields a real image is entirely traceable from the nature of the eigenvalues.
The general case of $n_\textnormal{u}$ unstable Floquet multipliers may be deduced from the three following cases:
\begin{itemize}
\item there is a single unstable eigenvalue $\lambda^m > 1$, then $\mathcal{P} (\sigma) \in C([-\tau, 0], \mathbb{R}^n)$ for all $\sigma \in (-1, 1)$.

\item there are two complex conjugate unstable eigenvalues $\lambda, \lambda^* \in \mathbb{C}$ (where the star symbolizes the complex conjugacy), then $\mathcal{P} ( \sigma, \sigma^*) \in C([-\tau, 0], \mathbb{R}^n)$ for all $\sigma \in \mathbb{D}$.

\item there is a single unstable eigenvalue $\lambda^m < -1$, then, without loss of generality, $\lambda = |\lambda^m|^{\frac{1}{m}} e^{i\frac{\pi}{m}}$ and the corresponding eigenvector is $(v_1, \lambda^{-1} v_2, \dots, \lambda^{-(m-1)} v_m)$ with $v_j \in C([-\tau, 0], \mathbb{R}^n)$ for $j = 1, \dots, m$ (cf. Lemma \ref{lem:equivalence_eig}). It follows that $\mathcal{P}_j ( e^{i\frac{(j-1) \pi}{m}} \sigma ) \in C([-\tau, 0], \mathbb{R}^n)$ for all $\sigma \in (-1, 1)$ and $j = 1, \dots, m$.
In this case, the manifold is non-orientable, it is topologically equivalent to a Möbius strip: according to the conjugacy relation \eqref{eq:unrolled_conjugacy}, we have that $\mathring{\mathcal{F}}^m (\mathcal{P}( |\lambda^m|^{-1} \sigma )) = \mathcal{P}(\lambda^m |\lambda^m|^{-1} \sigma) = \mathcal{P}(- \sigma)$ for all $\sigma \in \mathbb{D}$.
\end{itemize}
\end{remark}

While possible to construct a zero-finding problem in the same vein as in Section \ref{sec:periodic_orbit}, we favour generating the parameterization of the local unstable manifold via an explicit recurrence relation.
First, for $j = 1, \dots, m$, we expand $\mathcal{P}_j$ as the Taylor-Chebyshev series
\[
[\mathcal{P}_j (\sigma)](s(t)) = \sum_{|\alpha| \ge 0} \left( \{\mathbf{p}_j\}_{0,\alpha} + 2 \sum_{\beta \ge 1} \{\mathbf{p}_j\}_{\beta, \alpha} T_\beta (t) \right) \sigma^\alpha, \qquad \text{for all } t \in [-1, 1], \, \sigma \in \mathbb{D}^{n_\textnormal{u}},
\]
where $s(t) \bydef \frac{\tau}{2}(t-1)$ scales $[-1, 1]$ to $[-\tau, 0]$.
Also, we define $\{ \mathbf{p}_j \}_\alpha \bydef \{ \{ \mathbf{p}_j \}_{\beta, \alpha} \}_{\beta \ge 0} \in (\ell^1_\nu)^n$ for $|\alpha| \ge 0$.
In other words, $\{ \mathbf{p} \}_\alpha = (\{ \mathbf{p}_1 \}_\alpha, \dots, \{ \mathbf{p}_m \}_\alpha)$ denotes the sequences of Chebyshev coefficients corresponding to analytic functions in $(C([-1, 1], \mathbb{R}^n))^m$.
The analyticity of $\mathcal{P}_1, \dots, \mathcal{P}_m$ implies that there exists $\nu > 1$ such that their sequence of Taylor-Chebyshev coefficients solving the conjugacy relation \eqref{eq:unrolled_conjugacy} belongs to
\[
\ell^1 (\ell^1_\nu) \bydef \left\{ \mathbf{a} \in (\ell^1_\nu)^{(\mathbb{N} \cup \{0\})^{n_\textnormal{u}}} \, : \, | \mathbf{a} |_{\ell^1 (\ell^1_\nu)} \bydef \sum_{|\alpha| \ge 0} | \{ \mathbf{a} \}_\alpha |_{\ell^1_\nu} = \sum_{|\alpha| \ge 0} \left( |\{\mathbf{a}\}_{0,\alpha}| + 2 \sum_{\beta \ge 1} |\{\mathbf{a}\}_{\beta, \alpha}| \nu^\beta \right) < \infty \right\}.
\]
This is a Banach algebra with the discrete convolution product
\[
\mathbf{a} \circledast \mathbf{b} \bydef \left\{ \sum_{|\beta| = 0}^{|\alpha|} \{\mathbf{a}\}_{\alpha - \beta} * \{\mathbf{b}\}_\beta \right\}_{|\alpha| \ge 0}, \qquad \text{for all } \mathbf{a}, \mathbf{b} \in \ell^1(\ell^1_\nu),
\]
which corresponds to the Cauchy product for Taylor series whose coefficients are Chebyshev series. Since $f$ in the DDE \eqref{eq:dde} is polynomial, there is a natural mapping, denoted with the same symbol, $f : (\ell^1(\ell^1_\nu))^n \times (\ell^1(\ell^1_\nu))^n \to (\ell^1(\ell^1_\nu))^n$ defined by replacing products of Taylor-Chebyshev series with the convolution product $\circledast$.
Once again, we believe that our abuse of notation will not lead to confusion; the algebraic rules defining the polynomial $f$ are unambiguously deduced from its arguments.

\begin{remark}
Let $\mathbf{a} = (\mathbf{a}_1, \dots, \mathbf{a}_m) \in ((\ell^1(\ell^1_\nu))^n)^m$ with $a_j = ((a_j)_1, \dots, (a_j)_n)$ for $j=1, \dots,m$. For convenience, we denote $\{ \mathbf{a} \}_\alpha = (\{ \mathbf{a}_1\}_\alpha, \dots, \{ \mathbf{a}_m \}_\alpha) \in ((\ell^1_\nu)^n)^m$ and $\{ \mathbf{a}_j \}_\alpha = (\{ (\mathbf{a}_j)_1 \}_\alpha, \dots, \{ (\mathbf{a}_j)_n \}_\alpha) \in (\ell^1_\nu)^n$ for $j=1, \dots, m$.
\end{remark}

We draw the reader's attention to the previously derived recurrence relation \eqref{eq:rec_rel}.
The implication of this sequence of equations is that, having computed the bundle of the unstable eigenspace (e.g. computed from Section \ref{sec:eigendecomposition}) over the periodic orbit (e.g. computed from Section \ref{sec:periodic_orbit}), the higher order terms of the parameterization are simply obtained by recursively solving linear equations.
To be precise, for $j = 1, \dots, m$, fix $\{ \mathbf{p}_j \}_0 = \mathbf{c}_j \in (\ell^1_\nu)^n$ and $\{ \mathbf{p}_j \}_{e_l} = \mathbf{v}_{j,l} \in (\ell^1_\nu)^n$, for $l=1, \dots,n_{\textnormal{u}}$, where $\mathbf{c} = (\mathbf{c}_1, \dots, \mathbf{c}_m)$ is a zero of $\mathbf{F}_\circ$, or $\mathbf{F}_{\circ,\textnormal{elem}}$, and the unstable eigenvectors $\mathbf{v}_{1,l}, \dots, \mathbf{v}_{1,l}$ are given by \eqref{eq:eigenvector} for $l = 1, \dots, n_\textnormal{u}$.
Then, the higher order Taylor-Chebyshev coefficients $\{ \mathbf{p} \}_\alpha$ of the parameterization are given by solving recursively the linear systems
\begin{equation}\label{eq:recurrence_relation}
(\lambda^\alpha \mathbf{I} - \lambda^\alpha \mathbf{K}_1 (\tau, \mathbf{c}) - \mathbf{K}_2 (\tau, \mathbf{c})) \{ \mathbf{p} \}_\alpha = \mathbf{R}_\alpha (\tau, \mathbf{L}(\lambda, \mathbf{p}), \mathbf{p} ), \qquad |\alpha| \ge 2,
\end{equation}
with
\begin{align*}
\mathbf{R}_\alpha (\tau, \mathbf{L}(\lambda, \mathbf{p}), &\mathbf{p} ) \\
&= \begin{pmatrix}
\mathbf{S}\left(\frac{\tau}{2}
\left\{
f( \mathbf{L}(\boldsymbol{\pi}_\textnormal{T}^{ |\alpha|-1}\mathbf{p}_1), \boldsymbol{\pi}_\textnormal{T}^{|\alpha|-1}\mathbf{p}_m ) - f(0, 0) - [Df(0, 0)]
\begin{pmatrix}
\mathbf{L}(\boldsymbol{\pi}_\textnormal{T}^{|\alpha|-1}\mathbf{p}_1) \\
\boldsymbol{\pi}_\textnormal{T}^{|\alpha|-1}\mathbf{p}_m
\end{pmatrix}
\right\}_\alpha
\right) \\
\mathbf{S}\left(\frac{\tau}{2}
\left\{
f( \mathbf{L}(\boldsymbol{\pi}_\textnormal{T}^{ |\alpha|-1}\mathbf{p}_2), \boldsymbol{\pi}_\textnormal{T}^{|\alpha|-1}\mathbf{p}_1 ) - f(0, 0) - [Df(0, 0)]
\begin{pmatrix}
\mathbf{L}(\boldsymbol{\pi}_\textnormal{T}^{|\alpha|-1}\mathbf{p}_2) \\
\boldsymbol{\pi}_\textnormal{T}^{|\alpha|-1}\mathbf{p}_1
\end{pmatrix}
\right\}_\alpha
\right) \\
\vdots \\
\mathbf{S}\left(\frac{\tau}{2}
\left\{
f( \mathbf{L}(\boldsymbol{\pi}_\textnormal{T}^{ |\alpha|-1}\mathbf{p}_m), \boldsymbol{\pi}_\textnormal{T}^{|\alpha|-1}\mathbf{p}_{m-1} ) - f(0, 0) - [Df(0, 0)]
\begin{pmatrix}
\mathbf{L}(\boldsymbol{\pi}_\textnormal{T}^{|\alpha|-1}\mathbf{p}_m) \\
\boldsymbol{\pi}_\textnormal{T}^{|\alpha|-1}\mathbf{p}_{m-1}
\end{pmatrix}
\right\}_\alpha
\right)
\end{pmatrix}
\end{align*}
and where
\begin{subequations}
\begin{align}
\begin{split}
\{ \boldsymbol{\pi}_\textnormal{T}^{N'} \mathbf{a} \}_{\alpha,\beta} &\bydef
\begin{cases}
\{\mathbf{a}\}_{\alpha,\beta}, & |\beta| \le N' \\
0, & |\beta| > N',
\end{cases} \qquad \text{for all } N' \in \mathbb{N} \cup \{0\}, \mathbf{a} \in \ell^1(\ell^1_\nu),
\end{split} \\
\begin{split}\label{eq:def_L}
\mathbf{L}(\lambda, \mathbf{a}) &\bydef \{ \lambda^\alpha \{ \mathbf{a} \}_\alpha \}_{|\alpha| \ge 0}, \qquad \text{for all } \mathbf{a} \in \ell^1(\ell^1_\nu),
\end{split} \\
\begin{split}
\mathbf{K}_1 (\tau, \mathbf{c}) &\bydef
\begin{pmatrix}
\mathbf{S} [\tfrac{\tau}{2} D_1 f(\mathbf{c}_1, \mathbf{c}_m)] & & & 0 \\
& \mathbf{S} [\tfrac{\tau}{2} D_1 f(\mathbf{c}_2, \mathbf{c}_1)] & & \\
& & \ddots & \\
0 & & & \mathbf{S} [\tfrac{\tau}{2} D_1 f(\mathbf{c}_m, \mathbf{c}_{m-1})]
\end{pmatrix},
\end{split} \\
\begin{split}
\mathbf{K}_2 (\tau, \mathbf{c}) &\bydef
\begin{pmatrix}
0 & \cdots & 0 & \mathbf{E} + \mathbf{S} [\tfrac{\tau}{2} D_2 f(\mathbf{c}_1, \mathbf{c}_m)] \\
\mathbf{E} + \mathbf{S} [\tfrac{\tau}{2} D_2 f(\mathbf{c}_2, \mathbf{c}_1)] & & 0 & 0 \\
& \ddots & & \vdots \\
0 & & \mathbf{E} + \mathbf{S} [\tfrac{\tau}{2} D_2 f(\mathbf{c}_m, \mathbf{c}_{m-1})] & 0
\end{pmatrix}.
\end{split}
\end{align}
\end{subequations}
Again, we remark that the decay of the higher order coefficients is controlled 
by fixing the length of the associated eigenvectors.

Let us review how such a parameterization of the local unstable manifold for the DDE \eqref{eq:dde} yields a parameterization of the local unstable manifold for the DDE \eqref{eq:dde_elem}.
Suppose the periodic orbit of the auxiliary polynomial DDE \eqref{eq:dde} represents a periodic orbit of the DDE \eqref{eq:dde_elem}.
Then, Corollary 1 in \cite{Henot2021} guarantees that the non-resonance property of the Floquet multipliers holds for both DDEs \eqref{eq:dde_elem} and \eqref{eq:dde}.
Furthermore, Point 3 of Theorem 3.1 in \cite{Henot2021} implies that the unstable manifold for both DDEs \eqref{eq:dde_elem} and \eqref{eq:dde} coincide.

\subsection{Numerical considerations}
\label{sec:unstable_manifold_num}

An approximation of the parameterization of the local unstable manifold can be obtained as follows.
Consider an approximate zero $\bar{\tau} > 0$ and $\bar{\mathbf{c}} = (\bar{\mathbf{c}}_1, \dots, \bar{\mathbf{c}}_m) \in \boldsymbol{\pi}^N ((\ell^1_\nu)^n)^m$ of $\mathbf{F}_\circ$, or $\mathbf{F}_{\circ, \textnormal{elem}}$ (cf. Section \ref{sec:periodic_orbit_num}).
Moreover, for $l = 1, \dots, n_\textnormal{u}$, consider the approximate unstable eigenvalue $\bar{\lambda}^m_l$ and associated approximate eigenvectors $\bar{\mathbf{v}}_{1,l}, \dots, \bar{\mathbf{v}}_{m,l}$ given by \eqref{eq:eigenvector_num} (cf. Section \ref{sec:eigendecomposition_num}).
Let $\bar{\lambda} = (\bar{\lambda}_1, \dots, \bar{\lambda}_{n_\textnormal{u}})$.
For $j=1, \dots, m$, set $\{\bar{\mathbf{p}}_j\}_0 = \bar{\mathbf{c}}_j$ and $\{\bar{\mathbf{p}}_j\}_{e_l} = \bar{\mathbf{v}}_{j,l}$ for $l = 1, \dots, n_\textnormal{u}$.
Then, we have that $\mathbf{p}$, generated by the recurrence relation \eqref{eq:recurrence_relation}, is approximated by $\bar{\mathbf{p}}$, generated by the recurrence relation
\begin{equation}\label{eq:recurrence_relation_num}
\boldsymbol{\pi}^N(\bar{\lambda}^\alpha \mathbf{I} - \bar{\lambda}^\alpha \mathbf{K}_1 (\bar{\tau}, \bar{\mathbf{c}}) - \mathbf{K}_2(\bar{\tau}, \bar{\mathbf{c}})) \boldsymbol{\pi}^N\{ \bar{\mathbf{p}} \}_\alpha = \boldsymbol{\pi}^N \mathbf{R}_\alpha (\bar{\tau}, \mathbf{L}(\bar{\lambda}, \bar{\mathbf{p}}), \bar{\mathbf{p}} ), \qquad |\alpha| \ge 2.
\end{equation}

We conclude this section by computing the parameterization of the local unstable manifold associated with the $m\tau$-periodic orbit obtained for the cubic Ikeda equation \eqref{eq:cubic_ikeda} (cf. Section \ref{sec:cubic_ikeda_po}).

\subsection{Example: unstable manifold for the cubic Ikeda equation}
\label{sec:cubic_ikeda_param}

For the cubic Ikeda equation \eqref{eq:cubic_ikeda}, $f$, given in \eqref{eq:f_cubic_ikeda}, is polynomial and acts on the Taylor-Chebyshev coefficients as $f (\mathbf{a}, \mathbf{b}) = \mathbf{b} - \mathbf{b}^{\circledast 3}$ where $\mathbf{b}^{\circledast k} \bydef \underbrace{\mathbf{b} \circledast \dots \circledast \mathbf{b}}_{k \text{ times}}$.

Consider $\bar{\tau}_\textnormal{init} > 0$ and $\bar{\mathbf{c}}_\textnormal{init} \in \boldsymbol{\pi}^N (\ell^1_\nu)^m$ computed in Section \ref{sec:cubic_ikeda_po}; in particular, here $m=8$ and $N = 30$.
Recall from Section \ref{sec:cubic_ikeda_eigendecomposition} that the unstable manifold is expected to be $1$-dimensional (i.e. $n_\textnormal{u}=1$) since there seems to be a single unstable Floquet multiplier $\bar{\mu} \approx -4.624622928960324$.
We also point out to the reader that since $\bar{\mu} < -1$, the unstable manifold is a topological Möbius strip (see also Remark \ref{rem:param_image}).

In Section \ref{sec:cubic_ikeda_eigendecomposition}, we chose arbitrarily one of the $m$-th root $\bar{\lambda} = |\bar{\mu}|^{\frac{1}{m}} e^{i\frac{\pi}{m}}$ and retrieved
the unstable eigenvectors $\bar{\mathbf{v}}_1, \dots, \bar{\mathbf{v}}_m$ associated with $\bar{\lambda}^m$.
For $j = 1, \dots, m$, we set $\{\bar{\mathbf{p}}_j\}_0 = (\bar{\mathbf{c}}_\textnormal{init})_j$ and $\{\bar{\mathbf{p}}_j\}_1 = \bar{\mathbf{v}}_j$.
Then, the recurrence relation \eqref{eq:recurrence_relation_num} reads
\[
\boldsymbol{\pi}^N (\bar{\lambda}^\alpha \mathbf{I} - \mathbf{K}_2(\bar{\tau}_\textnormal{init}, \bar{\mathbf{c}}_\textnormal{init})) \boldsymbol{\pi}^N \{ \bar{\mathbf{p}} \}_\alpha = \boldsymbol{\pi}^N 
\begin{pmatrix}
\mathbf{S} \left(-\frac{\bar{\tau}_\textnormal{init}}{2} \{(\boldsymbol{\pi}_\textnormal{T}^{\alpha-1}\bar{\mathbf{p}}_m)^{\circledast 3}\}_\alpha\right) \\
\mathbf{S} \left(-\frac{\bar{\tau}_\textnormal{init}}{2} \{(\boldsymbol{\pi}_\textnormal{T}^{\alpha-1}\bar{\mathbf{p}}_1)^{\circledast 3}\}_\alpha\right) \\
\vdots \\
\mathbf{S} \left(-\frac{\bar{\tau}_\textnormal{init}}{2} \{(\boldsymbol{\pi}_\textnormal{T}^{\alpha-1}\bar{\mathbf{p}}_{m-1})^{\circledast 3}\}_\alpha\right)
\end{pmatrix}, \qquad \alpha \ge 2,
\]
where
\begin{align*}
&\mathbf{K}_2(\bar{\tau}_\textnormal{init}, \bar{\mathbf{c}}_\textnormal{init}) \\
&=
\begin{pmatrix}
0 & \cdots & 0 & \mathbf{E} + \mathbf{S} (  \tfrac{\bar{\tau}_\textnormal{init}}{2}[\mathbf{I} - 3\mathbf{M}_{(\bar{\mathbf{c}}_\textnormal{init})_m^{*2}} ] ) \\
\mathbf{E} + \mathbf{S} (\tfrac{\bar{\tau}_\textnormal{init}}{2} [\mathbf{I} - 3\mathbf{M}_{(\bar{\mathbf{c}}_\textnormal{init})_1^{*2}}] ) & & 0 & 0 \\
& \ddots & & \vdots \\
0 & & \mathbf{E} + \mathbf{S} (\tfrac{\bar{\tau}_\textnormal{init}}{2} [\mathbf{I} - 3\mathbf{M}_{(\bar{\mathbf{c}}_\textnormal{init})_{m-1}^{*2}}] ) & 0
\end{pmatrix}.
\end{align*}

\begin{figure}
\centering
\begin{subfigure}[b]{0.32\textwidth}
\includegraphics[width=\textwidth]{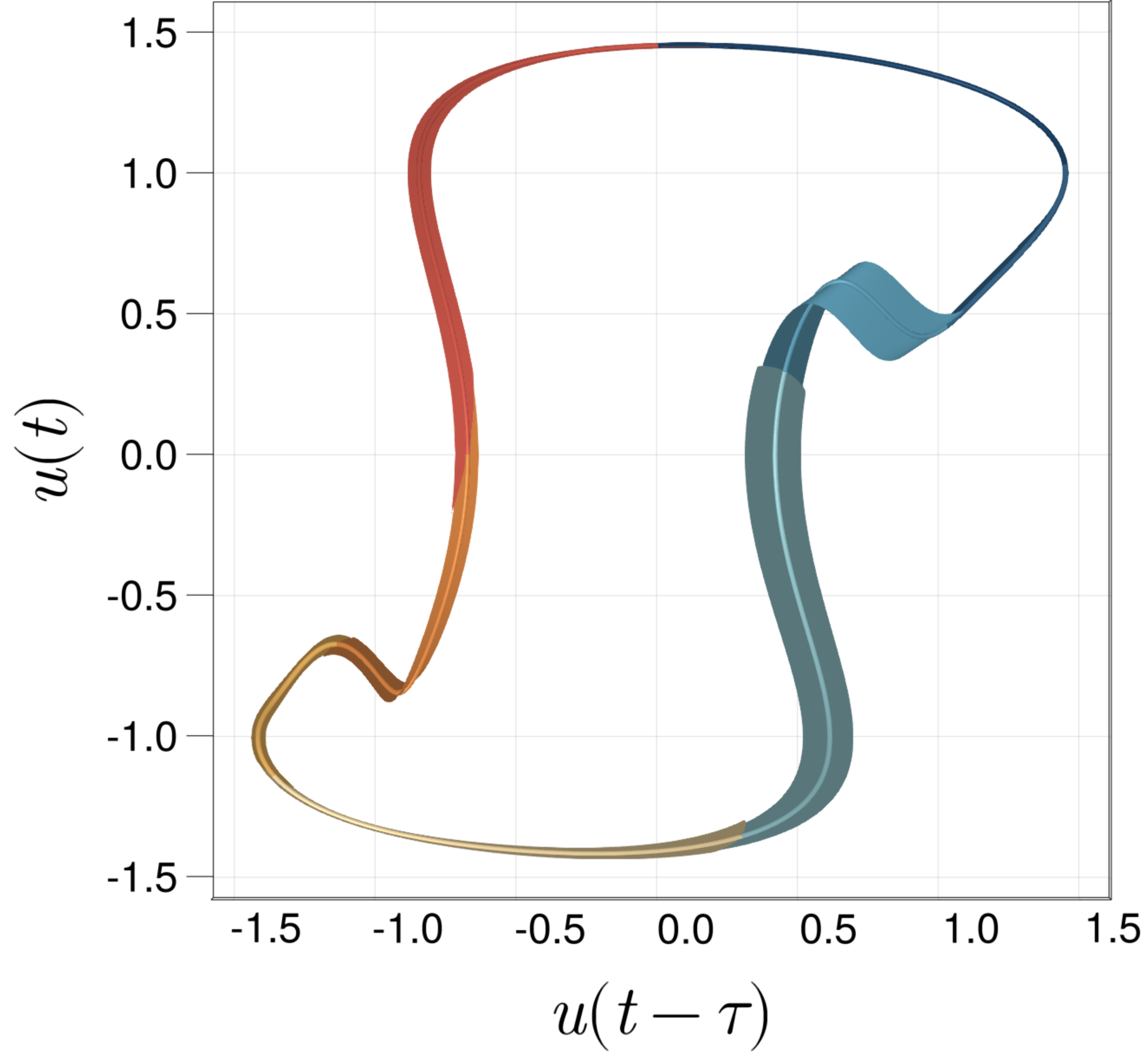}
\caption{}
\end{subfigure}
\hspace{0.03\textwidth}
\begin{subfigure}[b]{0.29\textwidth}
\includegraphics[width=\textwidth]{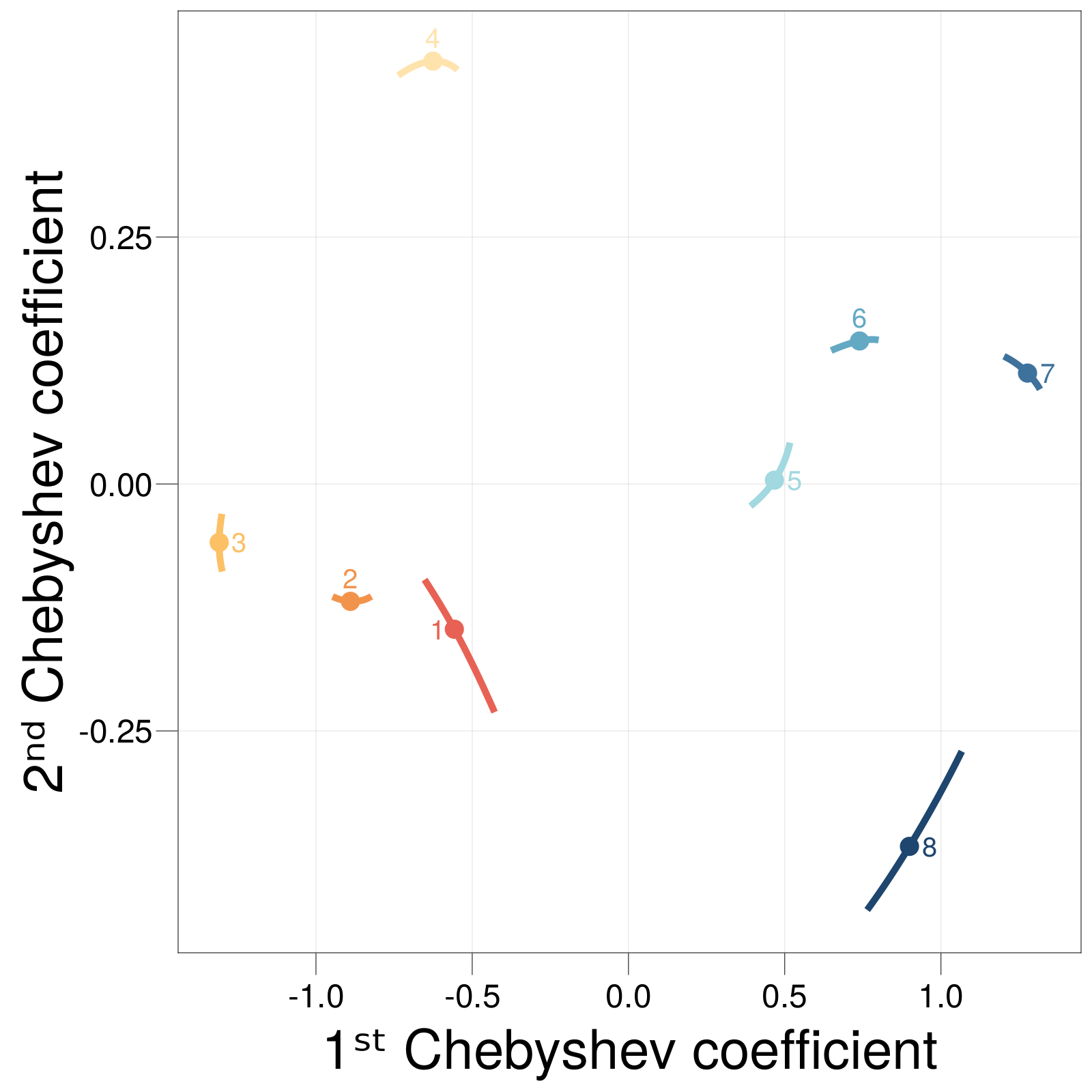}
\caption{}
\end{subfigure}
\hspace{0.03\textwidth}
\begin{subfigure}[b]{0.3\textwidth}
\includegraphics[height=\textwidth]{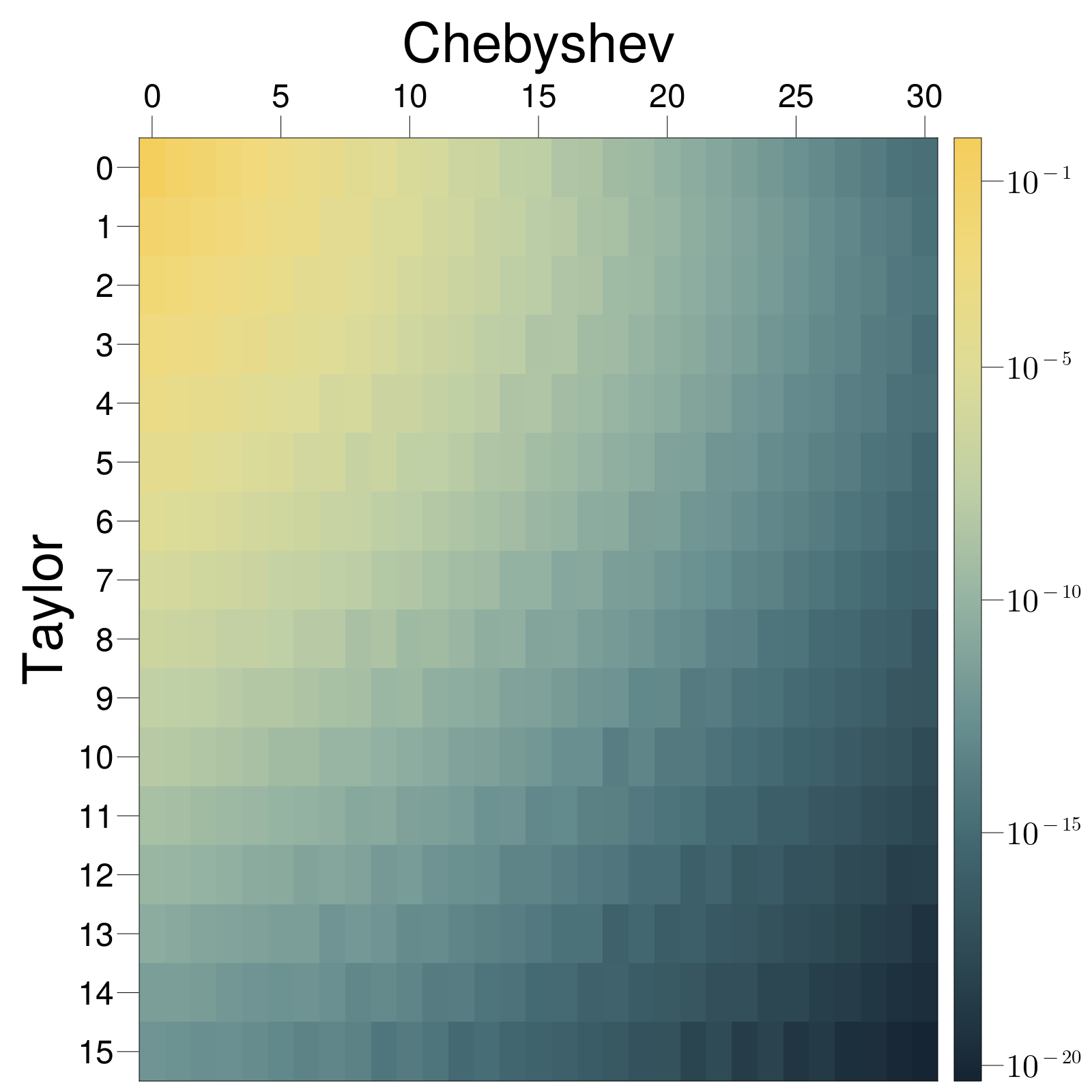}
\caption{}
\end{subfigure}
\caption{(a) Local unstable manifold of the $m\tau$-periodic orbit shown on Figure \ref{fig:cubic_ikeda_po} for the cubic Ikeda equation.
(b) Representation in \emph{Chebyshev space} of the parameterization of the local unstable manifold shown in (a). The dots correspond to the periodic orbit.
The numbers indicate the labelling of the $m$ pieces; the numbering follows the successive iterations of the time-$\tau$ map.
(c) Average $\{ m^{-1}\sum_{j=1}^m |\{ \bar{\mathbf{p}}_j \}_{\alpha,\beta}| \}_{\alpha,\beta \ge 0}$ of the sequences of Taylor-Chebyshev coefficients of the parameterization of the local unstable manifold shown in (a).}
\label{fig:cubic_ikeda_param}
\end{figure}

Each linear system is set on $\boldsymbol{\pi}^N (\ell^1_\nu)^m \simeq \mathbb{C}^{m(N+1)} = \mathbb{C}^{8\times31} = \mathbb{C}^{248}$.
We choose the Taylor truncation order to be $N' = 15$, thus the parameterization has a total of $m (N+1) (N'+1) = 8 \times 31 \times 16 = 3,968$ Taylor-Chebyshev coefficients; see Figure \ref{fig:cubic_ikeda_param}.

%%%%%%%%%%%%%%%%%%%%%%
%% CONNECTING ORBIT %%
%%%%%%%%%%%%%%%%%%%%%%

\section{Computation of the transverse homoclinic orbit}\label{sec:homoclinic_orbit}
%!TEX root = poincare_scenario_dde.tex

We are after the transverse intersection of the stable and unstable manifolds of an $m\tau$-periodic orbit of the DDE \eqref{eq:dde_elem}.
In the context of hyperbolic periodic orbits, the center manifold is generated by the translation invariance of the periodic solution. Hence, given a Poincar\'{e} section $\Sigma \subset C([-\tau, 0],\mathbb{R})$, the stable and unstable manifold intersect transversely whenever there exists a point $p \in W_\textnormal{s} \cap W_\textnormal{u} \cap \Sigma$ such that $T_p (W_\textnormal{s} \cap \Sigma) \oplus T_p (W_\textnormal{u} \cap \Sigma) = \Sigma$.

In Section \ref{sec:unstable_manifold}, we put our hands on a local unstable manifold by finding a parameterization via a conjugacy relation between the nonlinear and linear flows.
The stable manifold however cannot be swayed by such conjugacy due to its infinite-dimensional nature.
Still, we do know that there exists a differentiable graph of a local stable manifold of the periodic orbit (e.g. see \cite{Hale1977}).

As discussed in Section \ref{sec:eigendecomposition}, a hyperbolic periodic orbit of the DDE \eqref{eq:dde_elem} has $n$ centre directions when considered as a periodic orbit of the auxiliary polynomial DDE \eqref{eq:dde}.
This is clear when the DDE \eqref{eq:dde_elem} is already polynomial, since then the DDEs \eqref{eq:dde_elem} and \eqref{eq:dde} coincide and $n = 1$.
On the other hand, when the DDE \eqref{eq:dde_elem} has elementary nonlinearities, then $n = 1 + d$ with $d$ being the number of appended coordinates to obtain the auxiliary polynomial DDE \eqref{eq:dde} (recall the construction of $f$ given in \eqref{eq:f_poly}).
The $d$ additional coordinates introduce $d$ center directions which do not pertain to the original DDE \eqref{eq:dde_elem}.

Consequently, there exists $U \subset \mathcal{C}^n$, with $\textnormal{codim}\, U = n + n_\textnormal{u}$, such that the local graph of the stable manifold is the image of $Q \in C^1 ((\mathbb{R}/m\tau\mathbb{Z}) \times U, \mathcal{C}^n)$ (the nature of the stable manifold justifies writing $\mathcal{C}^n$ instead of just $C([-\tau, 0], \mathbb{R}^n)$).

The following lemma characterizes the correspondence between the transverse intersection of the stable and unstable manifolds for the DDE \eqref{eq:dde} and the DDS \eqref{eq:dds}.

\begin{lemma}\label{lem:equivalence_transversality}
Let $\tau > 0$, $m \in \mathbb{N}$ and $\phi \in C([-\tau, 0], \mathbb{R}^n)$.
The following statements are equivalent:
\begin{enumerate}
\item There exists a transverse homoclinic orbit of the DDE \eqref{eq:dde} joining the hyperbolic (with non-resonant unstable Floquet multipliers) $m\tau$-periodic orbit $t \in \mathbb{R}/m\tau\mathbb{Z} \mapsto S_t (\phi)$.
\item There exists a transverse ``heteroclinic" orbit of the DDS \eqref{eq:dds} joining the unstable manifold of the hyperbolic (with non-resonant unstable Floquet multipliers) $m$-periodic orbit $j \in \mathbb{Z}/m\mathbb{Z} \mapsto \mathcal{F}^j (S_{\theta} (\phi))$, for some $\theta \in \mathbb{R}/m\tau\mathbb{Z}$, to the stable manifold of the $m$-periodic orbit $j \in \mathbb{Z}/m\mathbb{Z} \mapsto \mathcal{F}^j (\phi)$.
\end{enumerate}
\end{lemma}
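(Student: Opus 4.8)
The plan is to identify the DDE trajectory underlying a homoclinic orbit with the corresponding orbit of the time-$\tau$ map through Lemma~\ref{lem:equivalence_dde_dds}, to match the two notions of ``converging to the periodic orbit'' by means of the asymptotic phase, and finally to match the two notions of transversality using that the differential of the time-$j\tau$ flow map is exactly $D\mathcal{F}^{j}$. Write $\Gamma := \{ S_t(\phi) : t \in \mathbb{R}/m\tau\mathbb{Z}\}$ and $\gamma_s := S_s(\phi)$; when $s \in \tau\mathbb{Z}$ the point $\gamma_s$ is one of the $m$ points of the $m$-periodic orbit $j \mapsto \mathcal{F}^j(\gamma_s)$ of the DDS \eqref{eq:dds}. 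The first fact I would record is that, since $\Gamma$ is hyperbolic with non-resonant unstable Floquet multipliers and $S_t$ extends to a flow on the unstable manifold (cf.\ the discussion after \eqref{eq:conjugacy}), every point of $W^{\textnormal{u}}(\Gamma)$ lies on a unique strong-unstable leaf $W^{\textnormal{uu}}(\gamma_s)$ and every point of $W^{\textnormal{s}}(\Gamma)$ on a unique strong-stable leaf $W^{\textnormal{ss}}(\gamma_s)$ (asymptotic phase; see \cite{Hale1977}). Moreover, because $\Gamma$ has minimal period $m\tau$, the leaf $W^{\textnormal{ss}}(\gamma_s)$ (resp.\ $W^{\textnormal{uu}}(\gamma_s)$) coincides with the local stable (resp.\ unstable) manifold of $\gamma_s$ as a fixed point of $\mathcal{F}^m$: indeed $\|\mathcal{F}^{mk}(q) - \gamma_s\| \to 0$ exponentially if and only if $\|S_t(q) - \gamma_{s+t}\| \to 0$ exponentially, using that $S_{t'}$ is Lipschitz for $t' \in [0,m\tau]$ by Gr\"onwall's inequality; this is also consistent with Lemma~\ref{lem:equivalence_param}.

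For the implication from Point 1 to Point 2, suppose $t \mapsto h_t$ is a transverse homoclinic orbit of \eqref{eq:dde} to $\Gamma$. Since the existence of such an orbit is translation-invariant, I may assume its forward asymptotic phase is $0$, i.e.\ $\|h_t - \gamma_t\| \to 0$ as $t \to +\infty$; let $\theta \in \mathbb{R}/m\tau\mathbb{Z}$ be its backward asymptotic phase, so $\|h_t - \gamma_{t+\theta}\| \to 0$ as $t \to -\infty$. Put $p := h_0$. By Lemma~\ref{lem:equivalence_dde_dds}, $(\mathcal{F}^j(p))_{j \in \mathbb{Z}} = (h_{j\tau})_{j \in \mathbb{Z}}$ is an orbit of the DDS; by the previous paragraph $p \in W^{\textnormal{ss}}(\gamma_0) = W^{\textnormal{ss}}(\phi)$, a piece of the stable manifold of the $m$-periodic orbit $j \mapsto \mathcal{F}^j(\phi)$, and $p \in W^{\textnormal{uu}}(\gamma_\theta) = W^{\textnormal{uu}}(S_\theta(\phi))$, a piece of the unstable manifold of $j \mapsto \mathcal{F}^j(S_\theta(\phi))$, so $(\mathcal{F}^j(p))_j$ is a ``heteroclinic'' orbit of the DDS between these two $m$-periodic orbits (which share the underlying set $\Gamma$). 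For transversality, since $S_{j\tau} = \mathcal{F}^j$ the differential $D\mathcal{F}^j$ carries $T_p W^{\textnormal{uu}}$ and $T_p W^{\textnormal{ss}}$ onto the corresponding tangent spaces at $h_{j\tau}$, so transversality can be read off at $p$. Choosing a Poincar\'e section $\Sigma$ through $p$ complementary to the transported center subspace $E_{\textnormal{c}}(p)$ (which is $n$-dimensional and contains $\dot h(0)$), we have locally $W^{\textnormal{u}}(\Gamma) \cap \Sigma = W^{\textnormal{uu}}(\gamma_\theta)$ and $W^{\textnormal{s}}(\Gamma) \cap \Sigma = W^{\textnormal{ss}}(\gamma_0)$, so the hypothesis $T_p(W^{\textnormal{u}}(\Gamma) \cap \Sigma) \oplus T_p(W^{\textnormal{s}}(\Gamma) \cap \Sigma) = \Sigma$ is precisely the statement that the DDS unstable and stable manifolds meet transversely at $p$ (relative to $E_{\textnormal{c}}(p)$), which is Point 2.

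For the converse, given a transverse heteroclinic orbit $(\mathcal{F}^j(p))_j$ of the DDS as in Point 2: since $p$ lies on the unstable manifold, the semiflow extends to a flow there, so $h_t := S_t(p)$ is defined for all $t \in \mathbb{R}$ (forward existence also follows from $(\mathcal{F}^j(p))_j$ being a DDS orbit), and by Lemma~\ref{lem:equivalence_dde_dds} the curve $h$ is a trajectory of \eqref{eq:dde}. From $\mathcal{F}^j(p) \to \{S_\theta(\phi),\dots\}$ as $j \to -\infty$, $\mathcal{F}^j(p) \to \{\phi,\dots\}$ as $j \to +\infty$, and continuity of $S_{t'}$ on $[0,m\tau]$, one gets $h_t \to \Gamma$ as $t \to \pm\infty$ with backward asymptotic phase $\theta$ and forward asymptotic phase $0$; hence $h$ is a homoclinic orbit of \eqref{eq:dde} to $\Gamma$, and its transversality is recovered by reversing the tangent-space identification above. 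I expect the main obstacle to be the bookkeeping around transversality: reconciling the flow's notion (a transverse intersection inside a codimension-one Poincar\'e section transverse to $\Gamma$) with the DDS's notion (the intersection $T_p W^{\textnormal{u}}_{\textnormal{DDS}} \cap T_p W^{\textnormal{s}}_{\textnormal{DDS}}$ being trivial and the sum complementing the $n$-dimensional center subspace $E_{\textnormal{c}}(p)$), while correctly accounting for the $d$ extra center directions introduced by the polynomial embedding; the other delicate point is the asymptotic-phase argument, which both pins down $\theta$ and certifies that the strong (un)stable leaves of the flow coincide with the (un)stable manifolds of $\mathcal{F}^m$.
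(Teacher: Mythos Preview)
Your argument is correct but takes a genuinely different route from the paper. The paper works directly with the parameterizations: it writes a homoclinic point as $S_{k\tau}(P(\theta,\sigma)) = Q(0,h)$ for the unstable parameterization $P$ and the stable graph $Q$, then invokes (the proof of) Lemma~\ref{lem:equivalence_param} to identify the restrictions $\mathcal{P}_j(\sigma') \bydef P((j-1)\tau + \theta, \sigma')$ and $\mathcal{Q}_j(h') \bydef Q((j-1)\tau, h')$ with the DDS parameterizations of the (un)stable manifolds of the shifted and unshifted $m$-periodic orbits; transversality then reduces to the single identity $[D_{\sigma'}\mathcal{F}^k(\mathcal{P}_1(\sigma'))]_{\sigma'=\sigma} = [D_{\sigma'}S_{k\tau}(P(\theta,\sigma'))]_{\sigma'=\sigma}$. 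You instead use the classical asymptotic-phase/strong-leaf foliation, matching $W^{\textnormal{uu}}(\gamma_\theta)$ and $W^{\textnormal{ss}}(\gamma_0)$ with the (un)stable manifolds of $\mathcal{F}^m$ at the corresponding fixed points, and read transversality in a section complementary to the transported center. The paper's route is shorter and plugs straight into the parameterization machinery already built in Sections~\ref{sec:eigendecomposition}--\ref{sec:unstable_manifold}; your route is more self-contained and geometric, needs no reference to $P$, and also spells out the converse $2 \Rightarrow 1$, which the paper leaves implicit. One small wrinkle: you oscillate between a codimension-one Poincar\'e section (the flow picture) and a codimension-$n$ section complementary to $E_{\textnormal{c}}(p)$ (needed to absorb the $d$ extra center directions from the polynomial embedding); the latter is the correct object here, and once that is fixed your tangent-space identification goes through.
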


\begin{proof}
Assume that Point 1 holds. Let $n_\textnormal{u} \in \mathbb{N}$ denote the number of unstable Floquet multipliers.
Since the latter are non-resonant, let $P$ denote the parameterization of a local unstable manifold of $t \in \mathbb{R}/m\tau\mathbb{Z} \mapsto S_t (\phi)$ as described in Lemma \ref{lem:equivalence_param}. Let $U$ be an open subset of $C([-\tau, 0], \mathbb{R}^n)$ such that $Q : \mathbb{R}/m\tau\mathbb{Z} \times U \to \mathcal{C}^n$ is the local graph of the stable manifold of the periodic orbit.

Since Point 1 holds, there exists $\theta \in \mathbb{R}/m\tau\mathbb{Z}$, $\sigma \in \mathbb{R}^{n_\textnormal{u}}$, $h \in \mathcal{C}^n$ and $k \in \mathbb{N}\cup\{0\}$ such that $S_{k \tau} (P(\theta, \sigma)) = Q(0, h)$. By hyperbolicity, the transversality of the intersection amounts to $\textnormal{image} \, [D_{\sigma'} S_{k \tau} (P(\theta, \sigma'))]_{\sigma' = \sigma} \cap \textnormal{image} \, [D_{h'} Q (0, h')]_{h' = h} = \{ 0 \}$.

Now, according to the proof of Lemma \ref{lem:equivalence_param}, $\mathcal{P}_j (\sigma') \bydef P((j-1)\tau + \theta \pmod{m\tau}, \sigma')$, for all $j = 1, \dots, m$, yields a parameterization of the local unstable manifold of the $m$-periodic orbit $j \in \mathbb{Z}/m\mathbb{Z} \mapsto \mathcal{F}^j (S_\theta (\phi))$ of the DDS \eqref{eq:dds}. This periodic orbit is nothing more than the $m\tau$-periodic orbit shifted by $\theta$, that is $t \in \mathbb{R}/m\tau\mathbb{Z} \mapsto S_t (S_\theta(\phi))$.
Moreover, $\mathcal{Q}_j (h') \bydef Q((j-1)\tau, h')$, for all $h' \in U$ and $j = 1, \dots, m$, gives the local graph of the stable manifold of the $m$-periodic orbit $j \in \mathbb{Z}/m\mathbb{Z} \mapsto \mathcal{F}^j (\phi)$. Of course, this periodic orbit coincides exactly with $t \in \mathbb{R}/m\tau\mathbb{Z} \mapsto S_t (\phi)$.

Moreover, for any $\sigma' \in \mathbb{C}^{n_\textnormal{u}}$,
\[
\mathcal{F}^k (\mathcal{P}_1 (\sigma')) = S_{k \tau} (\mathcal{P}_1 (\sigma')) = S_{k \tau} (P(\theta, \sigma')).
\]
In particular, we have $\mathcal{F}^k (\mathcal{P}_1 (\sigma)) = S_{k \tau} (P(\theta, \sigma)) = Q(0, h) = \mathcal{Q}_1(h)$, thus the homoclinic orbit of the DDE \eqref{eq:dde} is equivalent to an ``heteroclinic" orbit of the DDS \eqref{eq:dds}.
The intersection is transverse since $D \mathcal{Q}_1 (h) = [D_{h'} Q(0, h')]_{h' = h}$ and, from the previous equality,
\[
[D_{\sigma'} \mathcal{F}^k (\mathcal{P}_0 (\sigma'))]_{\sigma' = \sigma}
=
[D_{\sigma'} S_{k \tau} (P(\theta, \sigma'))]_{\sigma' = \sigma}. \qedhere
\]
\end{proof}

The previous lemma highlights that a transverse homoclinic orbit of the DDE \eqref{eq:dde} may look like a transverse ``heteroclinic" orbit of the DDS \eqref{eq:dds} whenever $\theta \ne 0$. We used quotation marks to emphasize that truly the connection is homoclinic and not heteroclinic. This is merely an artefact due to the time-$\tau$ map $\mathcal{F}$.

\subsection{Connection of the invariant manifolds}
\label{sec:connection}

Suppose $c = (c_1, \dots, c_m) \in (\mathcal{C}^n)^m$ is a fixed-point of the DDS \eqref{eq:unrolled_dds}. Let $c_\theta = ( (c_\theta)_1, \dots, (c_\theta)_m)$ denote the fixed-point $c$ shifted by $\theta$, that is $(c_\theta)_j = S_\theta (c_j)$. Suppose that $\mathcal{P}$ is the parameterization of the local unstable manifold of $c_\theta$ (cf. Section \ref{sec:unstable_manifold}).
We seek $\sigma, h$ and a trajectory $y_1, \dots, y_k \in \mathcal{C}^n$ of the DDS \eqref{eq:dds} satisfying
\begin{equation}\label{eq:co}
\begin{cases}
y_1 = \mathcal{P}_m (\sigma), \\
y_j = \mathcal{F} (y_{j-1}), & j = 2, \dots, k, \\
\mathcal{Q}( h ) = \mathcal{F} (y_k),
\end{cases}
\end{equation}
where the image of $\mathcal{Q}$ is the local graph of the stable manifold of one of the $m$-periodic orbits $c_1, \dots, c_m$.
Note that we made the arbitrary choice of departing from $\mathcal{P}_m$.

We now rephrase the equations \eqref{eq:co} on appropriate sequence spaces.
Within the context of this work, it is appropriate to discretize the Banach space $\mathcal{C}_\alpha \subset \mathcal{C}$ whose elements are functions with absolutely convergent Chebyshev series, that is their sequence of coefficients belongs to $\ell^1$.
This function space, for instance, contains absolutely continuous functions (functions in $\mathcal{C}$ and differentiable almost everywhere).

On the other hand, the connecting orbit is analytic and we expand $y_1, \dots, y_k$ as Chebyshev series
\[
y_j (s (t)) = \{ \mathbf{y} \}_0 + 2 \sum_{\alpha \ge 1} \{ \mathbf{y} \}_\alpha T_\alpha (t), \quad \text{for all } t \in [-1, 1], \qquad j = 1, \dots, m,
\]
where $s(t) = \frac{\tau}{2} (t-1)$ scales $[-1, 1]$ to $[-\tau, 0]$.

Then, for any $\mathbf{Q} \in C^1((\ell^1)^n, (\ell^1)^n)$, consider the mapping $\mathbf{F}_\textnormal{co} : \mathbb{C} \times ((\ell^1(\ell^1_\nu))^n)^m \times \mathbb{C}^{n_\textnormal{u}} \times (\ell^1)^n \times ((\ell^1_\nu)^n)^k \to (\ell^1)^n \times ((\ell^1_\nu)^n)^k$ defined by
\begin{equation}\label{eq:def_F_co}
\mathbf{F}_\textnormal{co} (\tau, \mathbf{p}, \sigma, \mathbf{h}, \mathbf{y}; \mathbf{Q})
\bydef
\begin{pmatrix}
\mathbf{E} (\mathbf{y}_k) + \mathbf{S} (\frac{\tau}{2} f( \mathbf{Q}(\mathbf{h}), \mathbf{y}_k )) - \mathbf{Q}(\mathbf{h}) \\
\mathbf{E}_{\textnormal{T},\sigma} (\mathbf{p}_m) - \mathbf{y}_1 \\
\mathbf{E} (\mathbf{y}_1) + \mathbf{S} (\frac{\tau}{2} f( \mathbf{y}_2, \mathbf{y}_1 )) - \mathbf{y}_2 \\
\vdots \\
\mathbf{E} (\mathbf{y}_{k-1}) + \mathbf{S} (\frac{\tau}{2} f( \mathbf{y}_k, \mathbf{y}_{k-1} )) - \mathbf{y}_k
\end{pmatrix},
\end{equation}
where $\mathbf{E}$ is given in \eqref{eq:def_Cheb_eval}, $\mathbf{S}$ is given in \eqref{eq:def_Cheb_integral} and $\mathbf{E}_{\textnormal{T},\sigma} : (\ell^1(\ell^1_\nu))^n \to (\ell^1_\nu)^n$ represents an evaluation operator at $\sigma$ with respect to the Taylor expansion. Namely, for all $\mathbf{a} = (\mathbf{a}_1, \dots, \mathbf{a}_n) \in (\ell^1(\ell^1_\nu))^n$ and $i=1, \dots,n$,
\[
(\mathbf{E}_{\textnormal{T},\sigma} ( \mathbf{a} ))_i \bydef \sum_{|\alpha| \ge 0} \{ \mathbf{a}_i \}_\alpha \sigma^\alpha.
\]

Suppose $\tau \in \mathbb{R}$ is the delay, $\mathbf{p} \in ((\ell^1(\ell^1_\nu))^n)^m$ denotes the sequences of Taylor-Chebyshev coefficients of $\mathcal{P}$ and $\mathbf{Q}$ represents the action of $\mathcal{Q}$ on sequences of Chebyshev coefficients.
It follows that if there exist $\sigma \in \mathbb{C}^{n_\textnormal{u}}, \mathbf{h} \in (\ell^1)^n$ and $\mathbf{y} = (\mathbf{y}_1, \dots, \mathbf{y}_k) \in ((\ell^1_\nu)^n)^k$ such that $\mathbf{F}_\textnormal{co} (\tau, \mathbf{p}, \sigma, \mathbf{h}, \mathbf{y}; \mathbf{Q}) = 0$, then  $\mathbf{y}_1, \dots, \mathbf{y}_k$ are the sequences of Chebyshev coefficients of a connecting orbit joining the stable and unstable manifolds.

\subsection{Calibration of the phase of the unstable manifold}

As introduced in Lemma \ref{lem:equivalence_transversality}, the quantity $\theta$ embodies a shift of the periodic orbit.
This shift is an unavoidable effect of our strategy since we iterate under the time-$\tau$ map which has a fixed step-size $\tau$.
Now, throughout this article we made a stand to use Chebyshev series due to their nice convergence properties. Sadly, the shift operator, on the level of the Chebyshev coefficients, is not well understood.
Instead, we consider the reciprocal situation: the connection occurs for a specific value of the phase of the unstable manifold.
Thus, the phase $\delta$ of the unstable manifold will be an unknown of the zero-finding problem for the transverse homoclinic orbit; the homoclinic orbit will be given as a trajectory of the DDS \eqref{eq:dds} joining the stable and unstable manifolds, where the stable manifold will be fixed and the unstable manifold will be solved for.

We go back to Section \ref{sec:unstable_manifold} and turn to the practical question of solving the conjugacy relation \eqref{eq:unrolled_conjugacy}; namely, for all $s \in [-\tau, 0]$,
\begin{equation}\label{eq:unstable_manifold_fun}
\begin{cases}
\displaystyle [\mathcal{P}_1(\Lambda \sigma)](s) = [\mathcal{P}_m(\sigma)](0) + \int_{-\tau}^s f( [\mathcal{P}_1(\Lambda \sigma)](s'), [\mathcal{P}_m(\sigma)](s') ) \, ds', \\
\displaystyle [\mathcal{P}_j(\Lambda \sigma)](s) = [\mathcal{P}_{j-1}(\sigma)](0) + \int_{-\tau}^s f( [\mathcal{P}_j(\Lambda \sigma)](s'), [\mathcal{P}_{j-1}(\sigma)](s') ) \, ds' , & j = 2, \dots, m.
\end{cases} %\qquad \text{for all } s \in [-\tau, 0].
\end{equation}
First, for $j = 1, \dots, m$, we expand $\mathcal{P}_j$ as the Taylor-Chebyshev series
\[
[\mathcal{P}_j (\sigma)](s(t)) = \sum_{|\alpha| \ge 0} \left( \{\mathbf{p}_j\}_{0,\alpha} + 2 \sum_{\beta \ge 1} \{\mathbf{p}_j\}_{\beta, \alpha} T_\beta (t) \right) \sigma^\alpha, \qquad \text{for all } t \in [-1, 1], \, \sigma \in \mathbb{D}^{n_\textnormal{u}},
\]
where $s(t) \bydef \frac{\tau}{2}(t-1)$ scales $[-1, 1]$ to $[-\tau, 0]$.
Also, we define $\{ \mathbf{p}_j \}_\alpha \bydef \{ \{ \mathbf{p}_j \}_{\beta, \alpha} \}_{\beta \ge 0} \in (\ell^1_\nu)^n$ for $|\alpha| \ge 0$.
In other words, $\{ \mathbf{p} \}_\alpha = (\{ \mathbf{p}_1 \}_\alpha, \dots, \{ \mathbf{p}_m \}_\alpha)$ denotes the sequences of Chebyshev coefficients corresponding to analytic functions in $(C([-1, 1], \mathbb{R}^n))^m$.
Then, the system of equations \eqref{eq:unstable_manifold_fun} is equivalent to
\[
\begin{cases}
\lambda^\alpha \{ \mathbf{p}_1 \}_\alpha = \mathbf{E} (\{\mathbf{p}_m\}_\alpha) + \mathbf{S} (\tfrac{\tau}{2} \{f( \mathbf{L}(\lambda, \mathbf{p}_1), \mathbf{p}_m )\}_\alpha ), \\
\lambda^\alpha \{ \mathbf{p}_j \}_\alpha = \mathbf{E} (\{\mathbf{p}_{j-1}\}_\alpha) + \mathbf{S} (\tfrac{\tau}{2} \{f( \mathbf{L}(\lambda, \mathbf{p}_j), \mathbf{p}_{j-1} )\}_\alpha ), & j = 2, \dots, m,
\end{cases} \qquad |\alpha| \ge 0,
\]
where $\mathbf{E}$ is given in \eqref{eq:def_Cheb_eval}, $\mathbf{S}$ is given in \eqref{eq:def_Cheb_integral} and $\mathbf{L}$ is given in \eqref{eq:def_L}.

Consider the mapping $\mathbf{F}_{W_\textnormal{u}} : \mathbb{C} \times \mathbb{C}^{n_\textnormal{u}} \times (\ell^1(\ell^1_\nu))^m \times \mathbb{C} \to \mathbb{C} \times \mathbb{C}^{n_\textnormal{u}} \times (\ell^1(\ell^1_\nu))^m$ defined by
\begin{equation}\label{eq:def_F_unstable_manifold}
\mathbf{F}_{W_\textnormal{u}} (\tau, \lambda, \mathbf{p}, \delta)
\bydef
\begin{pmatrix}
\{\mathbf{E} \big(\{\mathbf{p}_m\}_0 \big)\}_0 - \delta \\
\{\mathbf{E} \big(\{\mathbf{p}_m\}_{e_1} \big)\}_0 - \gamma_1 \\
\vdots \\
\{\mathbf{E} \big(\{\mathbf{p}_m\}_{e_{n_\textnormal{u}}}\big)\}_0 - \gamma_{n_\textnormal{u}} \\
\left\{ \mathbf{E} \big( \{\mathbf{p}_m\}_\alpha \big) + \mathbf{S} \big( \frac{\tau}{2} \{f( \mathbf{L}(\lambda, \mathbf{p}_1), \mathbf{p}_m ) \}_\alpha \big) - \lambda^\alpha \{\mathbf{p}_1\}_\alpha \right\}_{|\alpha| \ge 0} \\
\left\{ \mathbf{E} \big( \{\mathbf{p}_1\}_\alpha \big) + \mathbf{S} \big( \frac{\tau}{2} \{f( \mathbf{L}(\lambda, \mathbf{p}_2), \mathbf{p}_1 ) \}_\alpha \big) - \lambda^\alpha \{\mathbf{p}_2\}_\alpha \right\}_{|\alpha| \ge 0} \\
\vdots \\
\left\{ \mathbf{E} \big( \{\mathbf{p}_{m-1}\}_\alpha \big) + \mathbf{S} \big( \frac{\tau}{2} \{f( \mathbf{L}(\lambda, \mathbf{p}_m), \mathbf{p}_{m-1} ) \}_\alpha \big) - \lambda^\alpha \{\mathbf{p}_m\}_\alpha \right\}_{|\alpha| \ge 0}
\end{pmatrix},
\end{equation}
where $\gamma_1, \dots, \gamma_{n_\textnormal{u}} \in \mathbb{R}$ are fixed.
If $\mathbf{F}_{W_\textnormal{u}} (\tau, \lambda, \mathbf{p}, \delta) = 0$, with $\lambda = (\lambda_1, \dots, \lambda_{n_\textnormal{u}}) \in \mathbb{C}^{n_\textnormal{u}}$ such that $|\lambda_l| > 1$ for $l = 1, \dots, n_\textnormal{u}$, then $\mathbf{p}_1, \dots, \mathbf{p}_m$ are the sequences of Taylor-Chebyshev coefficients of the parameterization of the local unstable manifold of an $m\tau$-periodic orbit of the DDE \eqref{eq:dde_elem}.
As a matter of fact, this is still not quite sufficient since one must guarantee that the collection of unstable eigenvalues $\lambda$ is complete; to make this argument completely rigorous, one would need to obtain the Morse index of the periodic solution.
In Section \ref{sec:eigendecomposition}, we briefly mentioned that such a strategy is possible through an homotopy argument, e.g. see \cite{Lessard2020}.
For the needs of the present article, we will rely on the count obtained by numerically computing the spectrum of the operator $\mathbf{H}_j^N$, for some $j \in \{1, \dots, m\}$, given in \eqref{eq:def_H_num}.

Therefore, solving for a zero of $\mathbf{F}_{W_\textnormal{u}}$ amounts to solving for the delay, the unstable eigenvalues, the periodic orbit together with its phase, its unstable eigenvectors and the higher order term of the parameterization.

As done in the zero-finding problem \eqref{eq:zero_finding_problem_fp}, we impose that $\{\mathbf{E} \big(\{\mathbf{p}_m\}_0 \big)\}_0 - \delta = 0$ to fix the phase of the periodic orbit which is balanced out by solving for the delay $\tau$.

Additionally, since the multiplication of an eigenvector with a scalar also yields an eigenvector, we isolate them by fixing the length of the $m$-th component which is achieved by the set of equations $\{\mathbf{E}\big(\{\mathbf{p}_m\}_{e_1}\big)\}_0 - \gamma_1 = \ldots = \{\mathbf{E}\big(\{\mathbf{p}_m\}_{e_{n_\textnormal{u}}}\big)\}_0 - \gamma_{n_\textnormal{u}} = 0$. These equations are themselves compensated by solving for the eigenvalues $\lambda = (\lambda_1, \dots, \lambda_{n_\textnormal{u}})$.
The reader should recall from Section \ref{sec:unstable_manifold} that the scaling $\gamma_1, \dots, \gamma_{n_\textnormal{u}}$ of the eigenvectors directly impacts the decay rate of $\mathbf{p}$; in practice, one often tries several values until satisfied with the decay rate.

Once again, since the periodic orbit is an argument of the mapping $\mathbf{F}_{W_\textnormal{u}}$, we follow the same arguments that led to the mapping $\mathbf{F}_{\circ, \textnormal{elem}}$ given in \eqref{eq:zero_finding_problem_fp_poly}. For all $(\tau, \eta, \lambda, \mathbf{p}, \delta) \in \mathbb{C} \times \mathbb{C}^d \times \mathbb{C}^{n_\textnormal{u}} \times ((\ell^1(\ell^1_\nu))^{1+d})^m \times \mathbb{C}^{1+d}$, with $\mathbf{p}_j = (\mathbf{p}_j^{(1)}, \mathbf{p}_j^{(2)}) \in (\ell^1(\ell^1_\nu))^{1+d}$ such that $\mathbf{p}_j^{(1)} \in \ell^1(\ell^1_\nu)$, $\mathbf{p}_j^{(2)} \in (\ell^1(\ell^1_\nu))^d$ for $j = 1, \dots, m$, consider the mapping $\mathbf{F}_{W_\textnormal{u},\textnormal{elem}} : \mathbb{C} \times \mathbb{C}^d \times \mathbb{C}^{n_\textnormal{u}} \times ((\ell^1(\ell^1_\nu))^{1+d})^m \times \mathbb{C}^{1+d} \to \mathbb{C} \times \mathbb{C}^d \times \mathbb{C}^{n_\textnormal{u}} \times ((\ell^1(\ell^1_\nu))^{1+d})^m$ defined by
\begin{align}\label{eq:def_F_unstable_manifold_poly}
\mathbf{F}_{W_\textnormal{u}, \textnormal{elem}} (\tau, \eta, &\lambda, \mathbf{p}, \delta) \nonumber\\
&\bydef
\begin{pmatrix}
\{\mathbf{E} \big(\{\mathbf{p}_m\}_0 \big)\}_0 - \delta \\
\{\mathbf{E} \big(\{\mathbf{p}_m^{(1)}\}_{e_1} \big)\}_0 - \gamma_1 \\
\vdots \\
\{\mathbf{E} \big(\{\mathbf{p}_m^{(1)}\}_{e_{n_\textnormal{u}}}\big)\}_0 - \gamma_{n_\textnormal{u}} \\
\left\{ \mathbf{E} \big( \{\mathbf{p}_m\}_\alpha \big) +
\mathbf{S} \left(\frac{\tau}{2} \{f(\mathbf{L}(\lambda, \mathbf{p}_1), \mathbf{p}_m)\}_\alpha + \begin{pmatrix}0 \\ \boldsymbol{\iota}(\eta)\end{pmatrix} \right)
- \lambda^\alpha \{\mathbf{p}_1\}_\alpha \right\}_{|\alpha|\ge 0} \\
\left\{ \mathbf{E} \big( \{\mathbf{p}_1\}_\alpha \big) +
\mathbf{S} \left(\frac{\tau}{2} \{f(\mathbf{L}(\lambda, \mathbf{p}_2), \mathbf{p}_1)\}_\alpha + \begin{pmatrix}0 \\ \boldsymbol{\iota}(\eta)\end{pmatrix} \right)
- \lambda^\alpha \{\mathbf{p}_2\}_\alpha \right\}_{|\alpha|\ge 0} \\
\vdots \\
\left\{ \mathbf{E} \big( \{\mathbf{p}_{m-1}\}_\alpha \big) +
\mathbf{S} \left(\frac{\tau}{2} \{f(\mathbf{L}(\lambda, \mathbf{p}_m), \mathbf{p}_{m-1})\}_\alpha + \begin{pmatrix}0 \\ \boldsymbol{\iota}(\eta)\end{pmatrix} \right)
- \lambda^\alpha \{\mathbf{p}_m\}_\alpha \right\}_{|\alpha|\ge 0}
\end{pmatrix},
\end{align}
where $\gamma_1, \dots, \gamma_{n_\textnormal{u}} \in \mathbb{R}$ are fixed and $\boldsymbol{\iota}$ is given in \eqref{eq:def_iota}.
If $\mathbf{F}_{W_\textnormal{u}, \textnormal{elem}} (\tau, \eta, \lambda, \mathbf{p}, \delta) = 0$ and $\eta = 0$, then $\mathbf{p}_1^{(1)}, \dots, \mathbf{p}_m^{(1)}$ are the sequences of Taylor-Chebyshev coefficients of the parameterization of the local unstable manifold of an $m\tau$-periodic orbit of the original DDE \eqref{eq:dde_elem}. To see why this statement holds, one can see from Point 3 of Theorem 3.1 in \cite{Henot2021} that the unstable manifold in the auxiliary polynomial DDE \eqref{eq:dde} and the original DDE \eqref{eq:dde_elem} coincide whenever the periodic orbit is a periodic orbit of the original DDE \eqref{eq:dde_elem}.

\subsection{Zero-finding problem for the transverse homoclinic orbit}

We can now combine $\mathbf{F}_\textnormal{co}$ \eqref{eq:def_F_co} with $\mathbf{F}_{W_\textnormal{u}}$ \eqref{eq:def_F_unstable_manifold}, or $\mathbf{F}_{W_\textnormal{u}, \textnormal{elem}}$ \eqref{eq:def_F_unstable_manifold_poly}.

For all $(\tau, \lambda, \mathbf{p}, \mathbf{d}, \mathbf{y}) \in \mathbb{C} \times \mathbb{C}^{n_\textnormal{u}} \times (\ell^1(\ell^1_\nu))^m \times \ell^1 \times (\ell^1_\nu)^k$, define $\sigma \bydef (\{\mathbf{d}\}_0, \dots, \{\mathbf{d}\}_{n_\textnormal{u}-1}) \in \mathbb{C}^{n_\textnormal{u}}$, $\delta \bydef \{ \mathbf{d} \}_{n_\textnormal{u}} \in \mathbb{C}$ and $\mathbf{h} \bydef \{ \{ \mathbf{d} \}_{n_\textnormal{u} + 1 + \alpha} \}_{\alpha \ge 0} \in \ell^1$.
For any $\mathbf{Q} \in C^1(\ell^1, \ell^1)$, consider the mapping $$\mathbf{F}_\pitchfork : \mathbb{C} \times \mathbb{C}^{n_\textnormal{u}} \times (\ell^1(\ell^1_\nu))^m \times \ell^1 \times (\ell^1_\nu)^k \to \mathbb{C} \times \mathbb{C}^{n_\textnormal{u}} \times (\ell^1(\ell^1_\nu))^m \times \ell^1 \times (\ell^1_\nu)^k$$ defined by
\begin{equation}\label{eq:zero_finding_problem_connection}
\mathbf{F}_\pitchfork (\tau, \lambda, \mathbf{p}, \mathbf{d}, \mathbf{y}; \mathbf{Q})
\bydef
\begin{pmatrix}
\mathbf{F}_{W_\textnormal{u}} (\tau, \lambda, \mathbf{p}, \delta) \\
\mathbf{F}_\textnormal{co} (\tau, \mathbf{p}, \sigma, \mathbf{h}, \mathbf{y}; \mathbf{Q})
\end{pmatrix},
\end{equation}
Similarly, for all $(\tau, \eta, \lambda, \mathbf{p}, \mathbf{d}, \mathbf{y}) \in \mathbb{C} \times \mathbb{C}^d \times \mathbb{C}^{n_\textnormal{u}} \times ((\ell^1(\ell^1_\nu))^{1+d})^m \times (\ell^1)^{1+d} \times ((\ell^1_\nu)^{1+d})^k$, define $\sigma \bydef (\{\mathbf{d}\}_0, \dots, \{\mathbf{d}\}_{n_\textnormal{u}-1}) \in \mathbb{C}^{n_\textnormal{u}}$, $\delta \bydef (\{ \mathbf{d} \}_{n_\textnormal{u}}, \dots, \{ \mathbf{d} \}_{n_\textnormal{u}+d}) \in \mathbb{C}^{1+d}$ and $\mathbf{h} \bydef \{ \{ \mathbf{d} \}_{n_\textnormal{u}+d + 1 + \alpha} \}_{\alpha \ge 0} \in (\ell^1)^{1+d}$.
For any $\mathbf{Q} \in C^1((\ell^1)^{1+d}, (\ell^1)^{1+d})$, consider the mapping
\begin{align*}
\mathbf{F}_{\pitchfork, \textnormal{elem}} : \mathbb{C} \times \mathbb{C}^d \times \mathbb{C}^{n_\textnormal{u}} \times ((\ell^1(\ell^1_\nu))^{1+d})^m \times (\ell^1&)^{1+d} \times ((\ell^1_\nu)^{1+d})^k \\
&\to \mathbb{C} \times \mathbb{C}^d \times \mathbb{C}^{n_\textnormal{u}} \times ((\ell^1(\ell^1_\nu))^{1+d})^m \times (\ell^1)^{1+d} \times ((\ell^1_\nu)^{1+d})^k
\end{align*}
defined by
\begin{equation}\label{eq:zero_finding_problem_connection_poly}
\mathbf{F}_{\pitchfork, \textnormal{elem}} (\tau, \eta, \lambda, \mathbf{p}, \mathbf{d}, \mathbf{y}; \mathbf{Q})
\bydef
\begin{pmatrix}
\mathbf{F}_{W_\textnormal{u}, \textnormal{elem}} (\tau, \eta, \lambda, \mathbf{p}, \delta) \\
\mathbf{F}_\textnormal{co} (\tau, \mathbf{p}, \sigma, \mathbf{h}, \mathbf{y}; \mathbf{Q})
\end{pmatrix}.
\end{equation}

The following theorem is the core of this article as it motivates the entire design of the method.

\begin{theorem}\label{thm:transversality}
Let $\tau > 0$, $\eta \in \mathbb{C}^{n-1}$, $\lambda = (\lambda_1, \dots, \lambda_{n_\textnormal{u}}) \in \mathbb{C}^{n_\textnormal{u}}$ satisfying $|\lambda_l| > 1$ for $l = 1, \dots, n_\textnormal{u}$, $\mathbf{p} \in ((\ell^1(\ell^1_\nu))^n)^m$, $\mathbf{d} \in (\ell^1_\nu)^n$ and $\mathbf{y} \in ((\ell^1_\nu)^n)^k$.

\begin{enumerate}
\item Suppose $(\tau, \mathbf{c})$, with $\mathbf{c} = (\mathbf{c}_1, \dots, \mathbf{c}_m)$, is a zero of the mapping $\mathbf{F}_\circ$ given in \eqref{eq:zero_finding_problem_fp} and $\mathbf{Q} \in C^1 (\ell^1, \ell^1)$ whose image represents the local graph of the stable manifold of one of the $m$-periodic orbits $\mathbf{c}_1, \dots, \mathbf{c}_m$.
Assume further that the $m\tau$-periodic orbit represented by $\mathbf{c}$ is hyperbolic, with exactly $n_\textnormal{u}$ unstable Floquet multipliers.

If $\mathbf{F}_\pitchfork(\tau, \lambda, \mathbf{p}, \mathbf{d}, \mathbf{y}; \mathbf{Q}) = 0$ such that $\{\mathbf{p}\}_0$ corresponds to a phase shift of the $m\tau$-periodic orbit represented by $\mathbf{c}$, then $\lambda_1, \dots, \lambda_{n_\textnormal{u}}$ are (non-resonant) unstable Floquet multipliers, $\mathbf{p}$ is the corresponding parameterization of the local unstable manifold and $\mathbf{y}$ is a homoclinic orbit of the DDE \eqref{eq:dde_elem}.
Moreover, if $D\mathbf{F}_\pitchfork (\tau, \lambda, \mathbf{p}, \mathbf{d}, \mathbf{y}; \mathbf{Q})$ is invertible, then the stable and unstable manifolds intersect transversely in $\mathcal{C}_\alpha$.

\item Suppose $(\tau, 0, \mathbf{c})$, with $\mathbf{c} = (\mathbf{c}_1, \dots, \mathbf{c}_m)$, is a zero of the mapping $\mathbf{F}_{\circ, \textnormal{elem}}$ given in \eqref{eq:zero_finding_problem_fp_poly} and $\mathbf{Q} \in C^1 ((\ell^1)^{1+d}, (\ell^1)^{1+d})$ whose image represents the local graph of the stable manifold of one of the $m$-periodic orbits $\mathbf{c}_1, \dots, \mathbf{c}_m$.
Assume further that the $m\tau$-periodic orbit represented by $\mathbf{c}$ is hyperbolic, with exactly $n_\textnormal{u}$ unstable Floquet multipliers.

If $\mathbf{F}_{\pitchfork,\textnormal{elem}}(\tau, \eta, \lambda, \mathbf{p}, \mathbf{d}, \mathbf{y}; \mathbf{Q}) = 0$ such that $\{\mathbf{p}\}_0$ corresponds to a phase shift of the $m\tau$-periodic orbit represented by $\mathbf{c}$, then $\eta = 0$, $\lambda_1, \dots, \lambda_{n_\textnormal{u}}$ are (non-resonant) unstable Floquet multipliers, $\mathbf{p}$ is the corresponding parameterization of the local unstable manifold and $\mathbf{y}$ is a homoclinic orbit of the DDE \eqref{eq:dde_elem}.
Moreover, if $D\mathbf{F}_{\pitchfork,\textnormal{elem}} (\tau, \eta, \lambda, \mathbf{p}, \mathbf{d}, \mathbf{y}; \mathbf{Q})$ is invertible, then the stable and unstable manifolds intersect transversely in $\mathcal{C}_\alpha^{1+d}$.
\end{enumerate}
In the above, the derivatives $D\mathbf{F}_\pitchfork (\tau, \lambda, \mathbf{p}, \mathbf{d}, \mathbf{y}; \mathbf{Q})$ and $D\mathbf{F}_{\pitchfork,\textnormal{elem}} (\tau, \eta, \lambda, \mathbf{p}, \mathbf{d}, \mathbf{y}; \mathbf{Q})$ do not differentiate with respect to $\mathbf{Q}$.
\end{theorem}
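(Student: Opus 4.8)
The plan is to prove Point~1 in full; Point~2 follows by running the same argument with $\mathbf{F}_{W_\textnormal{u},\textnormal{elem}}$ in place of $\mathbf{F}_{W_\textnormal{u}}$, using Lemma~\ref{lem:Lemma_3_2_revisited} to extract $\eta=0$, and then transferring the conclusion to the original DDE via the coincidence of the stable and unstable manifolds of \eqref{eq:dde} and \eqref{eq:dde_elem} recorded in Section~\ref{sec:homoclinic_orbit}. \emph{Identification.} Split $\mathbf{F}_\pitchfork=(\mathbf{F}_{W_\textnormal{u}},\mathbf{F}_\textnormal{co})$ as in \eqref{eq:zero_finding_problem_connection}. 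The vanishing of $\mathbf{F}_{W_\textnormal{u}}(\tau,\lambda,\mathbf{p},\delta)$ with $|\lambda_l|>1$ is, by construction of \eqref{eq:def_F_unstable_manifold}, equivalent to: $\{\mathbf{p}\}_0$ is a fixed point of $\mathring{\mathcal{F}}$, each $\{\mathbf{p}\}_{e_l}$ is an unstable eigenvector, and the higher order Taylor--Chebyshev coefficients satisfy the recursion underlying \eqref{eq:rec_rel}. The existence of a solution $\mathbf{p}\in((\ell^1(\ell^1_\nu))^n)^m$ forces that recursion to be solvable for every $|\alpha|\ge 2$, which is precisely the statement that $\lambda_1,\dots,\lambda_{n_\textnormal{u}}$ are non-resonant; Lemma~\ref{lem:equivalence_param} then identifies $\mathbf{p}$ with the parameterization of the local unstable manifold of the $m\tau$-periodic orbit carried by $\{\mathbf{p}\}_0$, and since by hypothesis $\{\mathbf{p}\}_0$ is a phase shift of the hyperbolic orbit represented by $\mathbf{c}$ (which has exactly $n_\textnormal{u}$ unstable multipliers) the collection $\lambda_1,\dots,\lambda_{n_\textnormal{u}}$ is complete. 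Likewise $\mathbf{F}_\textnormal{co}(\tau,\mathbf{p},\sigma,\mathbf{h},\mathbf{y};\mathbf{Q})=0$ says, per the remark following \eqref{eq:def_F_co}, that $\mathbf{y}_1,\dots,\mathbf{y}_k$ are the Chebyshev coefficients of an orbit of the DDS \eqref{eq:dds} with $y_1=\mathcal{P}_m(\sigma)$ on the unstable manifold and $\mathcal{F}(y_k)=\mathcal{Q}(\mathbf{h})$ on the local graph of the stable manifold; Lemma~\ref{lem:equivalence_dde_dds} lifts this to an orbit of \eqref{eq:dde} that is backward asymptotic to the image of $\mathcal{P}$ and forward asymptotic to the image of $\mathcal{Q}$, i.e.\ a homoclinic orbit (and, via $\eta=0$ in Point~2, a homoclinic orbit of \eqref{eq:dde_elem}).

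\emph{Transversality as a kernel statement.} I will show the intersection is transverse at $p\bydef\mathcal{Q}(\mathbf{h})=\mathcal{F}^k(\mathcal{P}_m(\sigma))$. Let $\Sigma$ be the section on which $\mathbf{Q}$ is defined. Since the unstable pieces all carry the same phase, $\mathcal{F}^k\circ\mathcal{P}_m$ maps a neighbourhood of $\sigma$ into $W_\textnormal{u}\cap\Sigma$, so $T_p(W_\textnormal{u}\cap\Sigma)=\mathrm{image}\,[D_\sigma(\mathcal{F}^k\circ\mathcal{P}_m)(\sigma)]=:V$, and because \eqref{eq:dde} is analytic $W_\textnormal{u}$ is an analytic embedded manifold and $\mathcal{F}^k\circ\mathcal{P}_m$ is an immersion (see Section~\ref{sec:unstable_manifold} and \cite{Hale1977}), whence $\dim V=n_\textnormal{u}$. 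Likewise $T_p(W_\textnormal{s}\cap\Sigma)=\mathrm{image}\,[D\mathbf{Q}(\mathbf{h})]$, of codimension $n_\textnormal{u}$ in $\Sigma$. Hence the transversality condition $V\oplus T_p(W_\textnormal{s}\cap\Sigma)=\Sigma$ is, by the matching dimension count, equivalent to $V\cap\mathrm{image}\,[D\mathbf{Q}(\mathbf{h})]=\{0\}$.

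\emph{The kernel computation.} Suppose transversality fails at $p$, so there is $\dot\sigma\neq 0$ with $[D_\sigma(\mathcal{F}^k\circ\mathcal{P}_m)(\sigma)]\dot\sigma=[D\mathbf{Q}(\mathbf{h})]\dot{\mathbf{h}}$ for some $\dot{\mathbf{h}}$ (unique, since $\mathbf{Q}$ is a graph). Consider the tangent vector at $(\tau,\lambda,\mathbf{p},\mathbf{d},\mathbf{y})$ with $\dot\tau=0$, $\dot\lambda=0$, $\dot{\mathbf{p}}=0$, whose $\dot{\mathbf{d}}$ encodes $(\dot\sigma,\ \dot\delta=0,\ \dot{\mathbf{h}})$ and whose $\dot{\mathbf{y}}$ is given by $\dot{\mathbf{y}}_1\bydef[D_\sigma\mathcal{P}_m(\sigma)]\dot\sigma$ and $\dot{\mathbf{y}}_j\bydef[D\mathcal{F}(y_{j-1})]\dot{\mathbf{y}}_{j-1}$ for $j=2,\dots,k$. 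Because $\mathbf{F}_{W_\textnormal{u}}$ does not see $\sigma$ or $\mathbf{h}$ and all of its perturbed arguments vanish, $D\mathbf{F}_{W_\textnormal{u}}$ annihilates this vector; and a term-by-term look at \eqref{eq:def_F_co} shows the groups of components of $D\mathbf{F}_\textnormal{co}$ applied to it are exactly the linearisations at the current solution of $\mathcal{Q}(\mathbf{h})=\mathcal{F}(y_k)$, of $y_1=\mathcal{P}_m(\sigma)$, and of $y_j=\mathcal{F}(y_{j-1})$, all of which vanish by the chain rule and the choice of $\dot{\mathbf{h}}$, $\dot{\mathbf{y}}$ (using the explicit form $D\mathcal{F}=(I-D_1\mathcal{T})^{-1}D_2\mathcal{T}$). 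Thus the vector lies in $\ker D\mathbf{F}_\pitchfork(\tau,\lambda,\mathbf{p},\mathbf{d},\mathbf{y};\mathbf{Q})$ and is nonzero because $\dot{\mathbf{d}}\neq 0$, contradicting invertibility. Hence invertibility of the Fréchet derivative forces a transverse intersection in $\mathcal{C}_\alpha$ (in $\mathcal{C}_\alpha^{1+d}$ for Point~2). It is worth noting that holding $\mathbf{Q}$ fixed -- the derivative does not act on it -- is exactly what keeps the flow/phase direction from producing a spurious kernel element.

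\emph{Main obstacle.} I expect the delicate points to be: justifying that solvability of the recursion in $\mathbf{F}_{W_\textnormal{u}}=0$ genuinely encodes non-resonance so that Lemma~\ref{lem:equivalence_param} applies; the bookkeeping in the kernel computation that keeps $\dot\tau$, $\dot\lambda$ and $\dot\delta$ out of the relevant kernel direction; and making the dimension counts above rigorous in the Banach space $\mathcal{C}_\alpha$ -- in particular that $W_\textnormal{s}$ has finite codimension $n_\textnormal{u}$ there and that $\mathcal{F}^k\circ\mathcal{P}_m$ is an immersion onto $W_\textnormal{u}$, which is where analyticity of the DDE enters.
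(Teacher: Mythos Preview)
Your proposal is correct and follows essentially the same route as the paper: you split $\mathbf{F}_\pitchfork$ into $\mathbf{F}_{W_\textnormal{u}}$ and $\mathbf{F}_\textnormal{co}$, read off the non-resonance from the solvability of the recursion, invoke Lemma~\ref{lem:equivalence_param} for the parameterization, and prove transversality by contradiction---constructing a nonzero kernel element of $D\mathbf{F}_\pitchfork$ with $\dot\tau=\dot\lambda=\dot{\mathbf{p}}=\dot\delta=0$ and $\dot{\mathbf{y}}_j$ propagated by the variational equations, exactly as the paper does. Your added dimension-counting argument to reduce transversality to $V\cap\mathrm{image}\,[D\mathbf{Q}(\mathbf{h})]=\{0\}$ is a welcome elaboration the paper leaves implicit, and your use of $\mathcal{F}^k$ (rather than the paper's $\mathcal{F}^{k+1}$) is the correct count given \eqref{eq:co}.
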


\begin{proof}
Note that the proof of Point 1 can be used almost verbatim to prove Point 2.
One important difference is the implication that $\eta = 0$ for Point 2.
Let us detail why this holds.
Since $(\tau, 0, \mathbf{c})$ is a zero of $\mathbf{F}_{\circ, \textnormal{elem}}$ where, by assumption, $\tau$ is a real strictly positive number, we have that $\mathbf{c}$ represents a $m\tau$-periodic orbit of the DDE \eqref{eq:dde_elem} (cf. Section \ref{sec:periodic_orbit}).
Also, we assume that $\{\mathbf{p}\}_0$ corresponds to a phase shift of the $m\tau$-periodic orbit represented by $\mathbf{c}$, hence it is necessary that $\eta = 0$ (cf. Lemma \ref{lem:Lemma_3_2_revisited}).

Moreover, according to Definition \ref{def:resonance}, the eigenvalues $\lambda_1, \dots, \lambda_{n_{\textnormal{u}}}$ are non-resonant if and only if $\lambda^\alpha = \lambda_1^{\alpha_1} \times \ldots \times \lambda_{n_\textnormal{u}}^{\alpha_{n_\textnormal{u}}}$ is not an eigenvalue for all $\alpha \in (\mathbb{N} \cup \{ 0 \})^{n_\textnormal{u}}$ such that $|\alpha| \ge 2$.
Since $\mathbf{F}_{W_\textnormal{u}} (\tau, \lambda, \mathbf{p}, \delta) = 0$, then the recurrence relation \eqref{eq:recurrence_relation} has a solution for all $|\alpha| \ge 2$.
In particular, $\lambda^\alpha \mathbf{I} - \lambda^\alpha \mathbf{K}_1 (\tau, \mathbf{c}) - \mathbf{K}_2 (\tau, \mathbf{c})$ is invertible and $\lambda^\alpha$ is not an eigenvalue for all $|\alpha| \ge 2$.

The only remaining statement to prove is that the invertibility of $D\mathbf{F}_\pitchfork(\tau, \lambda, \mathbf{p}, \mathbf{d}, \mathbf{y}; \mathbf{Q})$ implies that the stable and unstable manifolds intersect transversely.

Define $\mathcal{P}(\sigma) \bydef \sum_{|\alpha| \ge 0} \{ \mathbf{p} \}_\alpha \sigma^\alpha$, $\sigma \bydef (\{ \mathbf{d} \}_0, \dots, \{ \mathbf{d} \}_{n_\textnormal{u}-1})$, $\mathbf{h} \bydef \{ \{ \mathbf{d} \}_{n_\textnormal{u} + 1 + \alpha} \}_{\alpha \ge 0}$ and $h(s(t)) \bydef \{ \mathbf{h} \}_0 + 2 \sum_{\alpha \ge 1} \{ \mathbf{h} \}_\alpha T_\alpha (t)$, where $s(t) \bydef \frac{\tau}{2}(t - 1)$ scales $[-1, 1]$ to $[-\tau, 0]$.
By assumption, the periodic orbit is hyperbolic. Hence, we must show that
\[
\textnormal{image} \, [D_{\sigma'} \mathcal{F}^{k+1} (\mathcal{P}_m (\sigma'))]_{\sigma' = \sigma} \cap \textnormal{image} \, D\mathcal{Q}( h ) = \{ 0 \},
\]
where $\mathcal{Q}$ represents the action of $\mathbf{Q}$ on $\mathcal{C}_\alpha$.
We shall argue by contradiction: suppose there exist $\psi_\sigma \in \mathbb{C}^{n_\textnormal{u}}$, $\xi \in \mathcal{C}$ such that $\psi_\sigma \ne 0$, $\xi \ne 0$ and
\begin{equation}\label{eq:proof_homoclinic}
[D_{\sigma'} \mathcal{F}^{k+1} (\mathcal{P}_m (\sigma'))]_{\sigma' = \sigma} \psi_\sigma = [D \mathcal{Q}(h)] \xi.
\end{equation}
Then, it follows that there exists a sequence of Chebyshev coefficients $\psi_h \in \ell^1$ such that $\xi (s(t)) = \{\psi_h\}_0 + 2 \sum_{\alpha \ge 1} \{ \psi_h \}_\alpha T_\alpha(t)$.

By assumption, $D\mathbf{F}_\pitchfork(\tau, \lambda, \mathbf{p}, \mathbf{d}, \mathbf{y}; \mathbf{Q})$ is invertible, in particular $[D_{\sigma'} \mathbf{E}_{\textnormal{T},\sigma'} (\mathbf{p}_m)]_{\sigma' = \sigma} : \mathbb{C}^{n_\textnormal{u}} \to \ell^1_\nu$.
From \eqref{eq:proof_homoclinic}, we have $\psi_{y_1}, \psi_{y_2}, \dots, \psi_{y_k} \in \ell^1_\nu$ satisfying
\begin{equation}\label{eq:proof_homoclinic_1}
\begin{cases}
\psi_{y_1} = [D_{\sigma'} \mathbf{E}_{\textnormal{T},\sigma'} (\mathbf{p}_m)]_{\sigma' = \sigma} \psi_\sigma, \\
\psi_{y_j} - \mathbf{S} ( \frac{\tau}{2} [D_1 f(\mathbf{y}_j, \mathbf{y}_{j-1})] \psi_{y_j} ) = \mathbf{E} (\psi_{y_{j-1}}) + \mathbf{S} ( \frac{\tau}{2} [D_2 f(\mathbf{y}_j, \mathbf{y}_{j-1})] \psi_{y_{j-1}} ), & j = 2, \dots, k, \\
[D \mathbf{Q}(\mathbf{h})] \psi_h - \mathbf{S} ( \frac{\tau}{2} [D_1 f(\mathbf{Q}(\mathbf{h}), \mathbf{y}_k)] [D \mathbf{Q}(\mathbf{h})] \psi_h ) = \mathbf{E} (\psi_{y_k}) + \mathbf{S} ( \frac{\tau}{2} [D_2 f(\mathbf{Q}(\mathbf{h}), \mathbf{y}_k)] \psi_{y_k} ).
\end{cases}
\end{equation}

Define $\psi_d \in \ell^1$ as
\[
\{\psi_d\}_\alpha \bydef
\begin{cases}
(\psi_\sigma)_{\alpha+1}, & \alpha = 0, \dots, n_\textnormal{u}-1, \\
0, & \alpha = n_\textnormal{u}, \\
\{\psi_h\}_{\alpha-n_\textnormal{u} - 1}, & \alpha \ge n_\textnormal{u} + 1.
\end{cases}
\]
A direct computation shows that
\begin{align*}
[D \mathbf{F}_\pitchfork (\tau, \lambda, \mathbf{p}, \mathbf{d}, \mathbf{y}; \mathbf{Q})]
&\begin{pmatrix}
0 \\
0 \\
0 \\
\psi_d \\
\psi_{y_1} \\
\vdots \\
\psi_{y_k}
\end{pmatrix} \\
&=
\begin{pmatrix}
0 \\
0 \\
0 \\
\mathbf{E} (\psi_k) + \mathbf{S} \Big(\frac{\tau}{2} [D f( \mathbf{Q}( \mathbf{h} ), \mathbf{y}_k )]
\begin{pmatrix}[D\mathbf{Q}( \mathbf{h} )]\psi_h \\ \psi_{y_k} \end{pmatrix} \Big) - [D\mathbf{Q}( \mathbf{h} )]\psi_h \\
[D_{\sigma'} \mathbf{E}_{\textnormal{T},\sigma'} (\mathbf{p}_m)]_{\sigma' = \sigma} \psi_\sigma - \psi_{y_1} \\
\mathbf{E} (\psi_{y_1}) + \mathbf{S} \Big(\frac{\tau}{2} [D f( \mathbf{y}_2, \mathbf{y}_1 )] \begin{pmatrix}\psi_{y_2}\\\psi_{y_1}\end{pmatrix} \Big) - \psi_{y_2} \\
\vdots \\
\mathbf{E} (\psi_{y_{k-1}}) + \mathbf{S} \Big(\frac{\tau}{2} [D f( \mathbf{y}_k, \mathbf{y}_{k-1} )] \begin{pmatrix}\psi_{y_k}\\\psi_{y_{k-1}}\end{pmatrix} \Big) - \psi_{y_k}
\end{pmatrix} \\
&= 0,
\end{align*}
where the last equality is equivalent to the equations \eqref{eq:proof_homoclinic_1}.
Thus, the injectivity of $D \mathbf{F}_\pitchfork (\tau, \lambda, \mathbf{p}, \mathbf{d}, \mathbf{y})$ is violated and the proof is complete.
\end{proof}

The hypotheses of Point 1 (resp. Point 2) of Theorem \ref{thm:transversality} presuppose some knowledge about the periodic orbit.
The idea is that the receiving end of the BVP (i.e. the stable manifold) is fixed a priori.
The \emph{return periodic orbit} is known initially as a zero $(\tau, \mathbf{c})$ of $\mathbf{F}_\circ$ \eqref{eq:zero_finding_problem_fp} (resp. a zero $(\tau, 0, \mathbf{c})$ of $\mathbf{F}_{\circ, \textnormal{elem}}$ \eqref{eq:zero_finding_problem_fp_poly}).
Next, one verifies the hyperbolicity and retrieves the Morse index of $\mathbf{c}$ (e.g. by adapting the work in \cite{Lessard2020}).
Then, one needs to obtain the mapping $\mathbf{Q}$ (see e.g. \cite{Llave2016}).
Lastly, one checks that for a zero of $\mathbf{F}_\pitchfork$ (resp. $\mathbf{F}_{\pitchfork, \textnormal{elem}}$) the Chebyshev coefficients $\{\mathbf{p}\}_0$ represent nothing more than a phase shift of the $m\tau$-periodic orbit represented by $\mathbf{c}$.
For the purpose of this article, we shall follow this procedure numerically.

\subsection{Numerical considerations}
\label{sec:homoclinic_orbit_num}

In this section, we detail how to apply numerically Theorem \ref{thm:transversality}.
We take this opportunity to backtrack to the beginning to describe the whole picture.

Suppose numerical simulations on the DDE \eqref{eq:dde_elem} yield an \emph{initial periodic orbit}.
Following Section \ref{sec:periodic_orbit_num}, we obtain a numerical approximation of the delay $\bar{\tau}_\textnormal{init} > 0$ and its sequences of Chebyshev coefficients $\bar{\mathbf{c}}_\textnormal{init} \in \boldsymbol{\pi}^N ((\ell^1_\nu)^n)^m$.
Then, Section \ref{sec:eigendecomposition_num} allows us to approximate the Floquet multipliers and associated eigenvectors for $\bar{\mathbf{c}}_\textnormal{init} \in \boldsymbol{\pi}^N ((\ell^1_\nu)^n)^m$ via the operator $\mathbf{H}^N_1 (\bar{\tau}_\textnormal{init}, \bar{\mathbf{c}}_\textnormal{init})$ given in \eqref{eq:def_H_num}.
We retrieve the numerical Morse index $n_\textnormal{u}$; we also check that there are $n$ center eigenvalues, thereby suggesting that the initial periodic orbit is hyperbolic.

At this point, if $n_\textnormal{u} > 0$, we consider that we have a potential \emph{candidate} and our task is to build the connecting orbit.
The parameterization of the local unstable manifold is obtained by applying the technique developed in Section \ref{sec:unstable_manifold_num}.
We numerically grow its boundary and monitor the distance with respect to the \emph{initial periodic orbit}. If this distance is below a prescribed tolerance, then we have found a connection to a \emph{return periodic orbit} corresponding to a phased shift of the \emph{initial periodic orbit}.

Now, let us fix the \emph{receiving side} of the BVP.
Firstly, we follow again Section \ref{sec:periodic_orbit_num} to produce a numerical approximation of the delay $\bar{\tau} > 0$ and the sequences of Chebyshev coefficients $\bar{\mathbf{c}} = (\bar{\mathbf{c}}_1, \dots, \bar{\mathbf{c}}_m) \in \boldsymbol{\pi}^N ((\ell^1_\nu \cap \mathbb{R}^{\mathbb{N} \cup \{0\}} )^n)^m$ representing the \emph{return periodic orbit}.
Secondly, we approximate $\mathbf{Q}$ by taking finitely many stable eigenvectors associated with the largest stable eigenvalues; thus, we consider the first-order approximation
\[
\bar{\mathbf{Q}} ( \boldsymbol{\pi}^N \mathbf{h}) = \bar{\mathbf{c}}_{j_*} + \bar{\mathbf{V}} \boldsymbol{\pi}^N \mathbf{h},
\]
for some $j_* \in \{1, \dots, m\}$, where the operator $\bar{\mathbf{V}}$ is the matrix whose columns are approximations of the stable eigenvectors in $\boldsymbol{\pi}^N (\ell^1_\nu)^n$ of $\mathbf{H}^N_{j_*} (\bar{\tau}, \bar{\mathbf{c}})$.
The neglected linear components should be small due to the decreasing (as the Chebyshev truncation order $N$ increases) contribution of the stable eigenvectors as their associated eigenvalues accumulate to $0$.

To find an approximate zero of $\mathbf{F}_\pitchfork$ (resp. $\mathbf{F}_{\pitchfork, \textnormal{elem}}$), we imitate the procedure in Section \ref{sec:periodic_orbit_num}: we apply Newton's method on a truncated sequence space.
For $N, N' \in \mathbb{N} \cup \{0\}$, define the truncation operator $\boldsymbol{\pi}^{N,N'} : \ell^1(\ell^1_\nu) \to \ell^1(\ell^1_\nu)$ by
\[
\{ \boldsymbol{\pi}^{N,N'} \mathbf{a} \}_{\alpha,\beta} \bydef
\begin{cases}
\{\mathbf{a}\}_{\alpha,\beta}, & \alpha \le N, |\beta| \le N', \\
0, & \textnormal{otherwise},
\end{cases} \qquad \text{for all } \mathbf{a} \in \ell^1(\ell^1_\nu).
\]
Applying Newton's method on $\boldsymbol{\pi}^{N,N'} \mathbf{F}_\pitchfork (\,\cdot\, ; \bar{\mathbf{Q}})\boldsymbol{\pi}^{N,N'}$ (resp. $\boldsymbol{\pi}^{N,N'} \mathbf{F}_{\pitchfork, \textnormal{elem}} (\,\cdot\, ; \bar{\mathbf{Q}}) \boldsymbol{\pi}^{N,N'}$) and assuming it has converged, we obtain a distance to the \emph{return periodic orbit} by computing the norm of the approximation of the stable coordinates $\mathbf{h} \bydef \{ \{ \mathbf{d} \}_{n_\textnormal{u}+ n + \alpha} \}_{\alpha \ge 0} \in (\ell^1)^n$.
In double precision, our criterion is to have a distance of order $\sim 10^{-8}$ since the neglected quadratic terms should then be of order machine precision $\sim 10^{-16}$.

At last, we also end up with a new value of the delay which we denote by $\bar{\tau}_\pitchfork$.
Since the transverse homoclinic orbit must occur for a single value of the delay $\tau$, the gap $|\bar{\tau}_\pitchfork - \bar{\tau}|$ represents some additional error coming from fixing $\bar{\mathbf{Q}}$ a priori in the BVP.

\subsection{Example: transverse homoclinic orbit for the cubic Ikeda equation}
\label{sec:cubic_ikeda_homoclinic}

The zero-finding problem $\mathbf{F}_\pitchfork$ \eqref{eq:zero_finding_problem_connection} for the transverse homoclinic orbit is composed of the mappings given in $\mathbf{F}_\textnormal{co}$ \eqref{eq:def_F_co} and $\mathbf{F}_{W_\textnormal{u}}$ \eqref{eq:def_F_unstable_manifold}.
Then, for the cubic Ikeda equation \eqref{eq:cubic_ikeda}, these mappings read
\[
\mathbf{F}_\textnormal{co} (\tau, \mathbf{p}, \sigma, \mathbf{h}, \mathbf{y}; \mathbf{Q})
=
\begin{pmatrix}
\mathbf{E}_{\textnormal{T},\sigma} (\mathbf{p}_m) - \mathbf{y}_1 \\
\mathbf{E} (\mathbf{y}_1) + \mathbf{S} (\frac{\tau}{2} (\mathbf{y}_1 - \mathbf{y}_1^{*3})) - \mathbf{y}_2 \\
\vdots \\
\mathbf{E} (\mathbf{y}_{k-1}) + \mathbf{S} (\frac{\tau}{2} (\mathbf{y}_{k-1} - \mathbf{y}_{k-1}^{*3})) - \mathbf{y}_k \\
\mathbf{E} (\mathbf{y}_k) + \mathbf{S} (\frac{\tau}{2} (\mathbf{y}_k - \mathbf{y}_k^{*3})) - \mathbf{Q}(\mathbf{h})
\end{pmatrix},
\]
and, since we are looking into a periodic orbit with a $1$-dimensional unstable manifold (cf. Section \ref{sec:cubic_ikeda_eigendecomposition}),
\[
\mathbf{F}_{W_\textnormal{u}} (\tau, \lambda, \mathbf{p}, \delta)
=
\begin{pmatrix}
\{\mathbf{E} \big(\{\mathbf{p}_m\}_0 \big)\}_0 - \delta \\
\{\mathbf{E} \big(\{\mathbf{p}_m\}_1 \big)\}_0 - \gamma \\
\left\{ \mathbf{E} \big( \{\mathbf{p}_m\}_\alpha \big) + \mathbf{S} \big( \frac{\tau}{2} \{\mathbf{p}_m - \mathbf{p}_m^{\circledast 3}\}_\alpha \big) - \lambda^\alpha \{\mathbf{p}_1\}_\alpha \right\}_{\alpha \ge 0} \\
\left\{ \mathbf{E} \big( \{\mathbf{p}_1\}_\alpha \big) + \mathbf{S} \big( \frac{\tau}{2} \{ \mathbf{p}_1 - \mathbf{p}_1^{\circledast 3} \}_\alpha \big) - \lambda^\alpha \{\mathbf{p}_2\}_\alpha \right\}_{\alpha \ge 0} \\
\vdots \\
\left\{ \mathbf{E} \big( \{\mathbf{p}_{m-1}\}_\alpha \big) + \mathbf{S} \big( \frac{\tau}{2} \{ \mathbf{p}_{m-1} - \mathbf{p}_{m-1}^{\circledast 3} \}_\alpha \big) - \lambda^\alpha \{\mathbf{p}_m\}_\alpha \right\}_{\alpha \ge 0}
\end{pmatrix},
\]
where $\gamma \in \mathbb{R}$ is fixed. As explained in Section \ref{sec:unstable_manifold}, the choice of scaling guides the convergence of the Taylor series and is generally adjusted \emph{on the fly}.

We follow diligently the procedure described in Section \ref{sec:homoclinic_orbit_num}.
Our last computation in Section \ref{sec:cubic_ikeda_param} gave use the parameterization of the local unstable manifold associated with the \emph{initial periodic orbit} $\bar{\mathbf{c}}_\textnormal{init} \in \boldsymbol{\pi}^N (\ell^1_\nu)^m$ computed in Section \ref{sec:cubic_ikeda_po}; in particular, here $m = 8$, $N = 30$ and $N' = 15$.
A standard numerical scheme allows us to grow the boundary of the local unstable manifold; in our case, this is especially easy to achieve since the manifold is $1$-dimensional.
After $17$ iterations of the time-$\tau$ map, we find a connecting orbit about $\sim 10^{-5}$ close to the $7$-th piece of a \emph{return periodic orbit} whose phase is about $-0.7879127215879392$; hence, we set $j_* = 7$ and $k = 16$ for $\mathbf{F}_\textnormal{co}$.
The \emph{return periodic orbit}, denoted $\bar{\mathbf{c}} \in \boldsymbol{\pi}^N (\ell^1_\nu \cap \mathbb{R}^{\mathbb{N} \cup \{0\}})^m$, is further refined by running Newton's method on the zero-finding problem $\mathbf{F}_\circ$ \eqref{eq:zero_finding_problem_fp}, in a similar vein as we did in Section \ref{sec:cubic_ikeda_po} but for a different phase $\delta$; the resulting approximate value of the delay is $\bar{\tau} \approx 1.5649592985680902$.
Then, we use $\mathbf{H}^N_{j^*} (\bar{\tau}, \bar{\mathbf{c}})$ to get an approximation of the stable eigenspace.

All of this gives us a good guess to use Newton's method on $\mathbf{F}_\pitchfork$ where the $\delta$ of the periodic orbit, chosen initially to be $\delta = 0$ in Section \ref{sec:cubic_ikeda_po}, will be tuned by the Newton iterations in order to reach the approximation of the local stable eigenspace of the \emph{return periodic orbit}.

\begin{figure}
\centering
\begin{subfigure}[b]{0.32\textwidth}
\includegraphics[width=\textwidth]{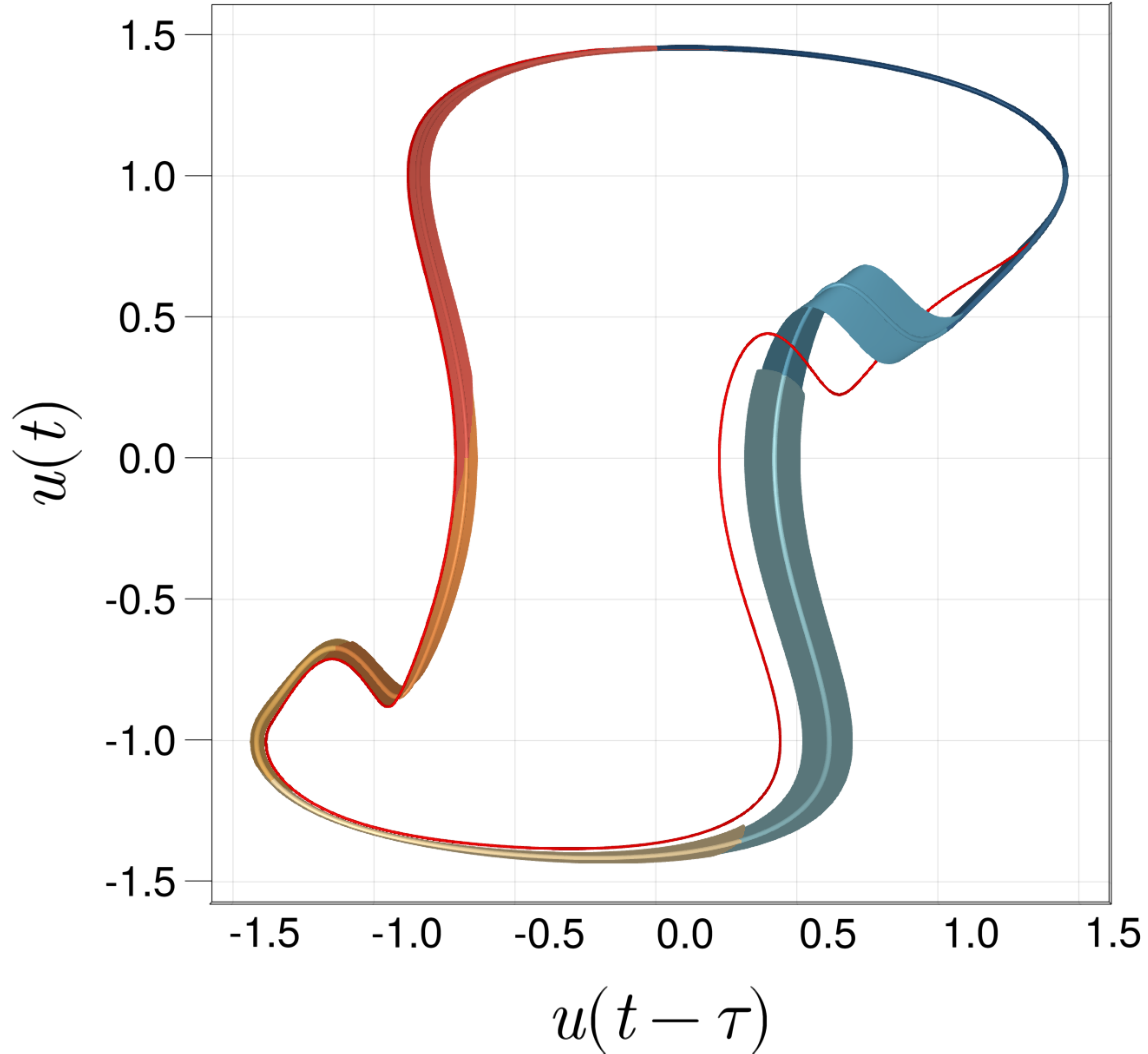}
\caption{}
\end{subfigure}
\hspace{0.03\textwidth}
\begin{subfigure}[b]{0.3\textwidth}
\includegraphics[width=\textwidth]{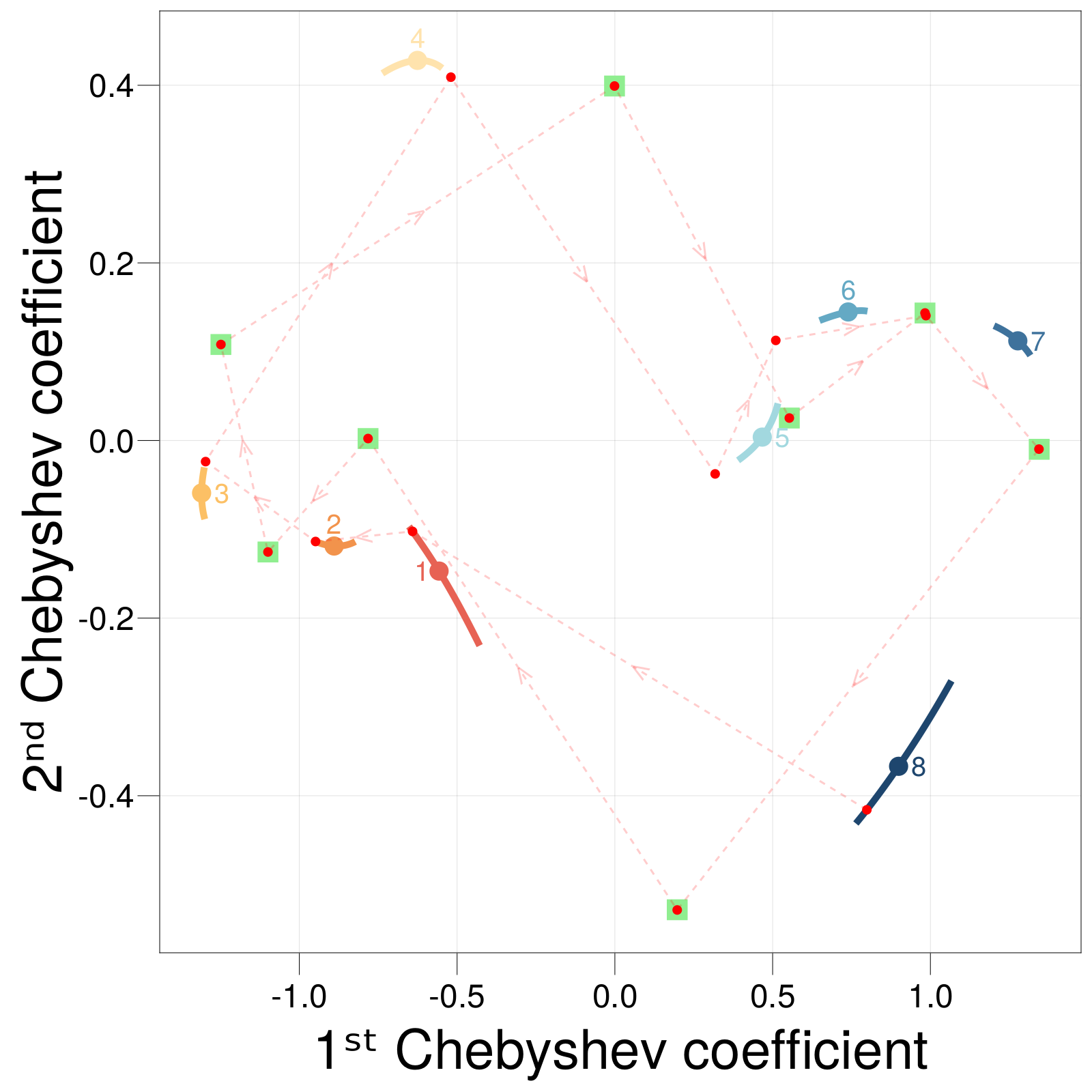}
\caption{}
\end{subfigure}
\hspace{0.03\textwidth}
\begin{subfigure}[b]{0.29\textwidth}
\includegraphics[width=\textwidth]{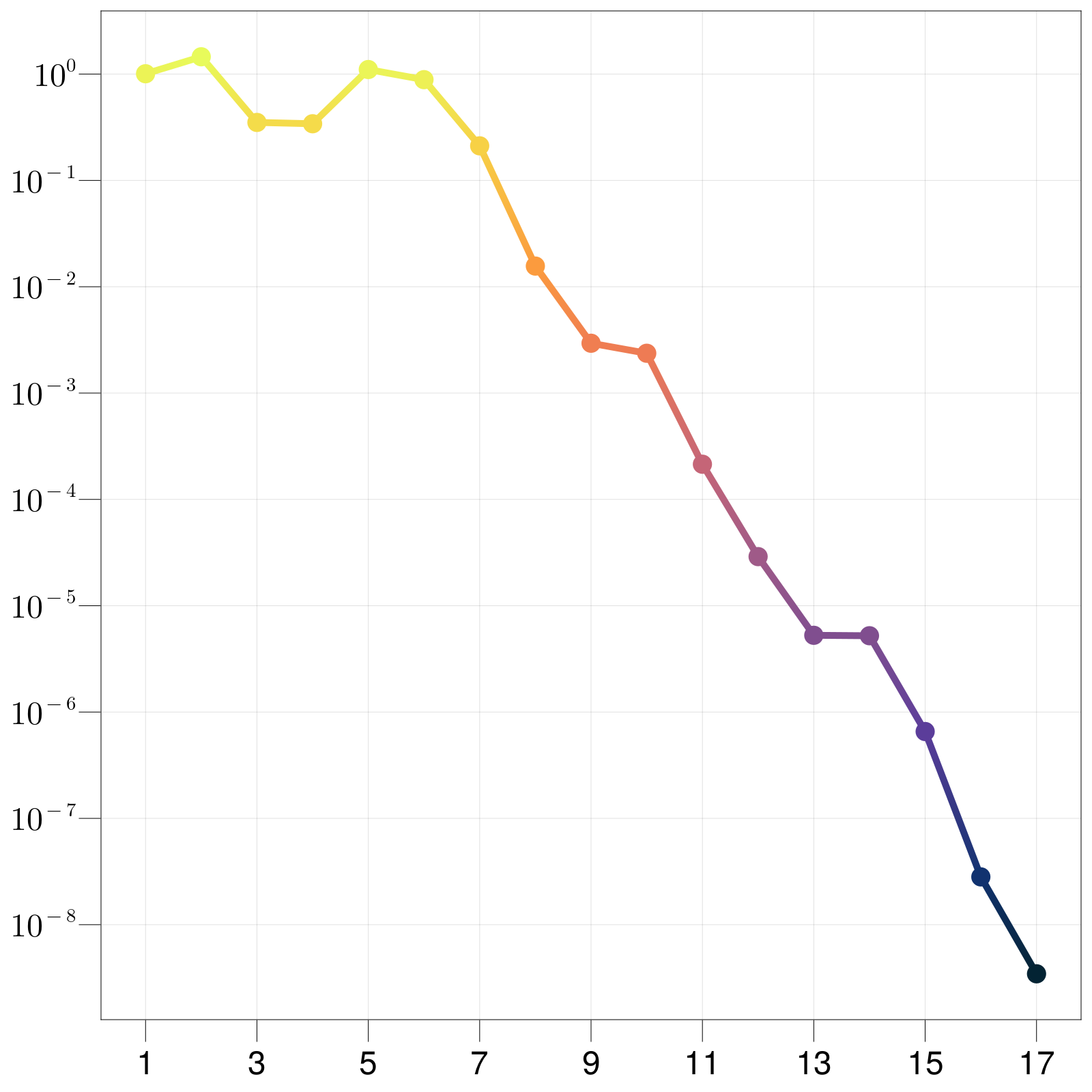}
\caption{}
\end{subfigure}
\caption{(a) Local unstable manifold of the approximate $m\tau$-periodic orbit shown in Figure \ref{fig:cubic_ikeda_param} and the transverse homoclinic orbit (red line) for the cubic Ikeda equation.
(b) Representation in \emph{Chebyshev space} of the parameterization of the local unstable manifold and the transverse homoclinic orbit (red dots) shown in (a); the connecting orbit starts at the red dot on the edge of the unstable manifold labelled 8.
The green squares represent a phase shift of the $m\tau$-periodic orbit shown in Figure \ref{fig:cubic_ikeda_po} whose local stable graph is eventually reached by the connecting orbit.
The red dashed line is meant for the reader to track the successive iterates of the time-$\tau$ map constituting the connecting orbit.
(c) Evolution of the distance of the connecting orbit as a function of the successive iterates.}
\label{fig:cubic_ikeda_homoclinic}
\end{figure}

The Newton iterations for the zero-finding problem $\mathbf{F}_\pitchfork$ are set on $\mathbb{C} \times \mathbb{C}^{n_\textnormal{u}} \times \boldsymbol{\pi}^{N,N'} (\ell^1(\ell^1_\nu))^m \times \boldsymbol{\pi}^N (\ell^1) \times \boldsymbol{\pi}^N (\ell^1_\nu)^k \simeq \mathbb{C} \times \mathbb{C}^{n_\textnormal{u}} \times \mathbb{C}^{m (N+1) (N'+1)} \times \mathbb{C}^{N+1} \times \mathbb{C}^{k (N+1)} \simeq \mathbb{C}^{1+1+8\times31\times16+31+16\times31} = \mathbb{C}^{4,497}$.
Figure \ref{fig:cubic_ikeda_homoclinic} shows the transverse homoclinic orbit.
The distance to the \emph{return periodic orbit} after convergence of Newton's method is of order $\sim 10^{-9}$ which is below our threshold $\sim 10^{-8}$.
In this computation, the two values of the delay are identical: $\bar{\tau}_\pitchfork = \bar{\tau}$.

%%%%%%%%%%%%%%%%%%
%% APPLICATIONS %%
%%%%%%%%%%%%%%%%%%

\section{Poincar\'{e} scenario for the Mackey-Glass equation}\label{sec:mackey_glass}
%!TEX root = poincare_scenario_dde.tex

In this section, we detail the computation of a transverse homoclinic orbit for the Mackey-Glass equation \eqref{eq:mg} as described in Section \ref{sec:homoclinic_orbit_num}; the code can be found at \cite{Henot2023}.
The steps are similar to the ones for cubic Ikeda equation (cf. sections \ref{sec:cubic_ikeda_po}, \ref{sec:cubic_ikeda_eigendecomposition}, \ref{sec:cubic_ikeda_param}, \ref{sec:cubic_ikeda_homoclinic}), with the notable difference that the Mackey-Glass equations has non-polynomial elementary nonlinearities, so the DDE \eqref{eq:dde} is an auxiliary polynomial DDE where $f$ is given in \eqref{eq:f_mg}; in particular, $d = 3$.

First, we fix the physiological parameters $a,b,\rho$ to $a = 2b = 1$ and $\rho = 9.65$. We also fix a value of the delay $\tau$ where chaos is numerically observed: $\tau \approx 1.82$.

\begin{figure}
\centering
\begin{subfigure}[b]{0.41\textwidth}
\includegraphics[width=\textwidth]{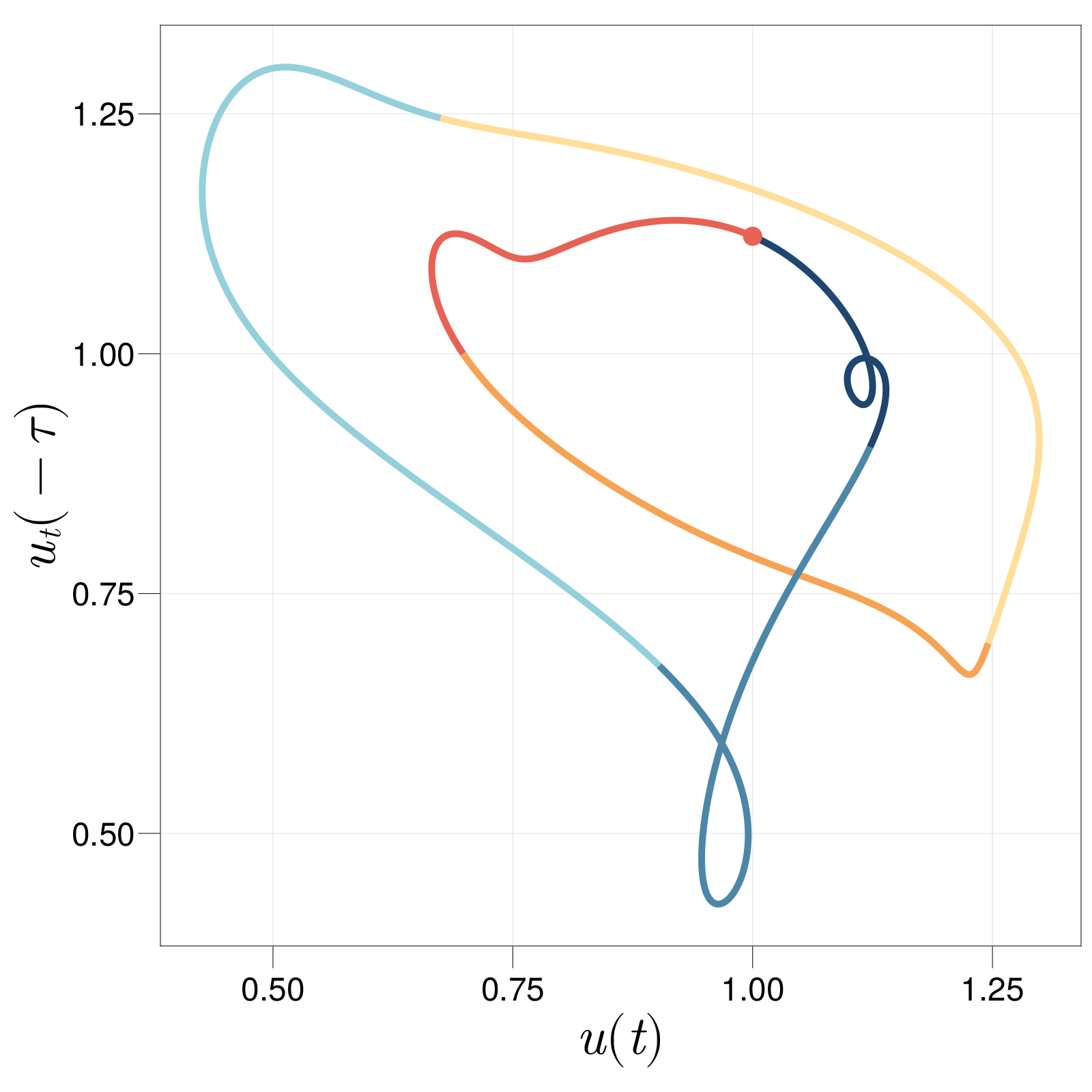}
\caption{}
\end{subfigure}
\hspace{0.06\textwidth}
\begin{subfigure}[b]{0.41\textwidth}
\includegraphics[width=\textwidth]{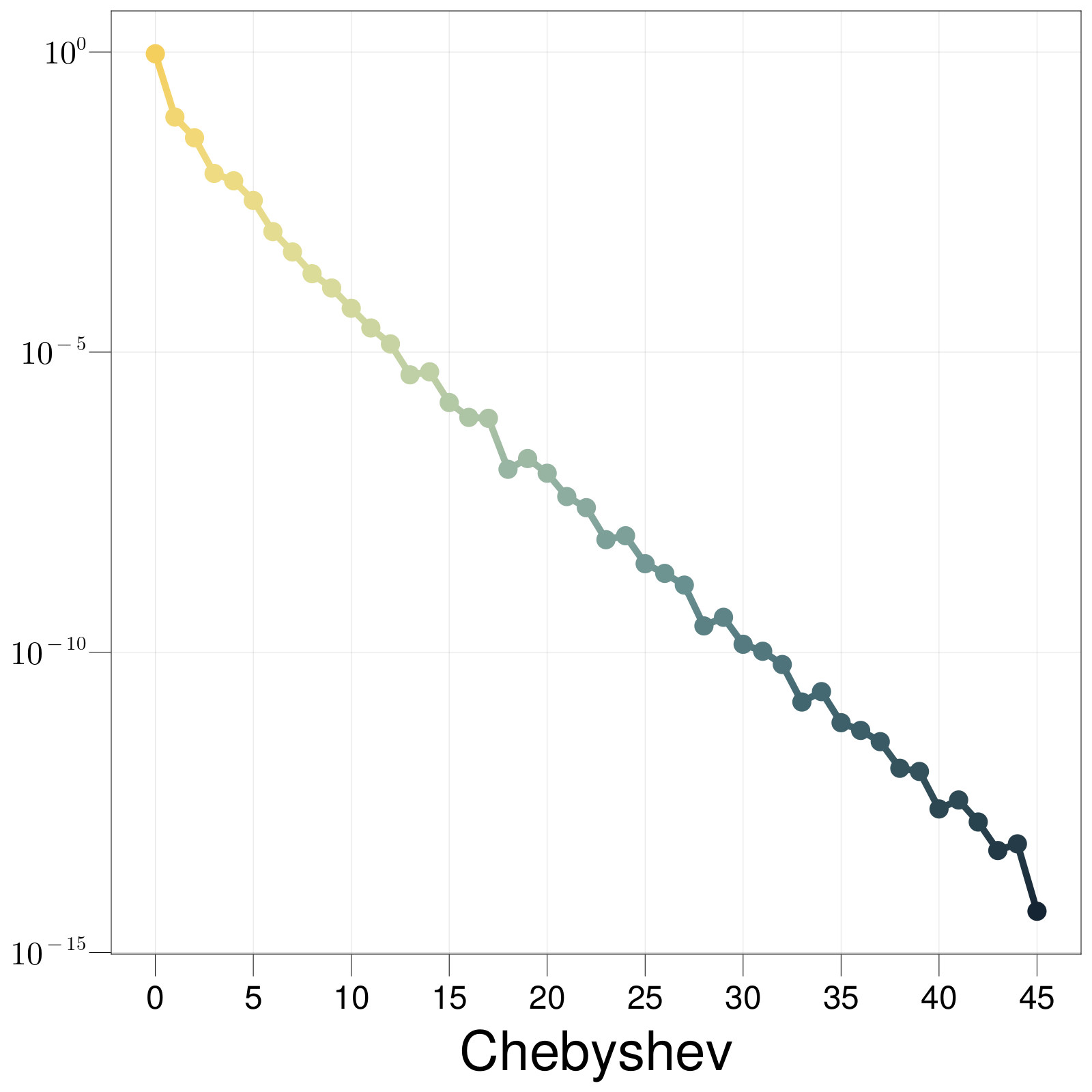}
\caption{}
\end{subfigure}
\caption{(a) $m\tau$-periodic orbit, with $m=6$, for the Mackey-Glass equation. The dot corresponds to the phase $\delta = 1$ of the periodic orbit.
(b) Average $\{ m^{-1}\sum_{j=1}^m |(\{ \bar{\mathbf{c}}_j \}_\alpha)_1| \}_{\alpha \ge 0}$ of the sequences of Chebyshev coefficients of the $m\tau$-periodic orbit shown in (a).}
\label{fig:mackey_glass_po}
\end{figure}

We identify a time series of a $m\tau$-periodic orbit with $m = 6$ and a phase $\delta = 1$.
We choose the truncation order $N = 45$ for the Chebyshev series and search for an approximate zero of $\mathbf{F}_{\circ, \textnormal{elem}}$ given in \eqref{eq:zero_finding_problem_fp_poly}.
Therefore, the Newton iterations for $\boldsymbol{\pi}^N \mathbf{F}_{\circ, \textnormal{elem}} \boldsymbol{\pi}^N$ are set on $\mathbb{R} \times \mathbb{R}^d \times \boldsymbol{\pi}^N ((\ell^1_\nu \cap \mathbb{R}^{\mathbb{N}\cup\{0\}} )^{1+d})^m \simeq \mathbb{R} \times \mathbb{R}^d \times \mathbb{R}^{m(1+d)(N+1)} \simeq \mathbb{R}^{1 + 3 + 6\times 4 \times 46} = \mathbb{R}^{1,108}$.
Performing Newton's iterations yields $\bar{\tau}_\textnormal{init} \approx 1.827334864516779$ and the sequences of Chebyshev coefficients $\bar{\mathbf{c}}_\textnormal{init} = ((\bar{\mathbf{c}}_\textnormal{init})_1, \dots, (\bar{\mathbf{c}}_\textnormal{init})_m) \in \boldsymbol{\pi}^N ((\ell^1_\nu)^{1+d})^m$.
Figure \ref{fig:mackey_glass_po} shows the approximate $m\tau$-periodic orbit and the average of the sequences of Chebyshev coefficients.

\begin{figure}
\centering
\includegraphics[height=5cm]{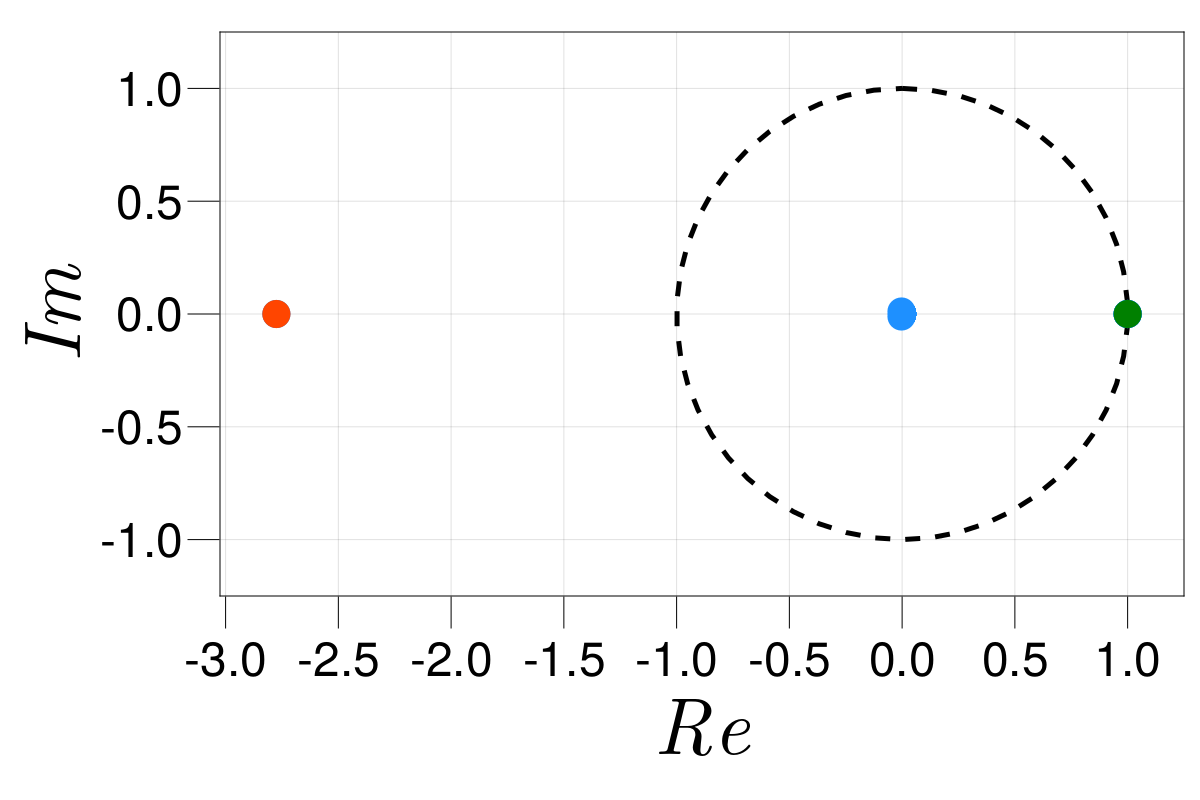}
\vspace{-.2cm}
\caption{Collection of $(1+d)(N+1) = 184$ Floquet multipliers associated with the $m\tau$-periodic orbit shown on Figure \ref{fig:mackey_glass_po} for the Mackey-Glass equation.
The black dashed circle is the unit circle.
There is $1$ unstable eigenvalue (red dot), $1+d = 4$ centre eigenvalue (green dot) and $179$ stable eigenvalues (blue dots).
Due to the proximity of the stable eigenvalues, only a single blue dot appears on the figure.}
\label{fig:mackey_glass_spectrum}
\end{figure}

We numerically retrieve the spectrum of $\mathbf{H}^N_1 (\bar{\tau}_\textnormal{init}, \bar{\mathbf{c}}_\textnormal{init})$ given in \eqref{eq:def_H_num}.
The numerical spectrum consists of $(1+d)(N+1) = 184$ eigenvalues; Figure \ref{fig:mackey_glass_spectrum} suggests that the periodic orbit has a single unstable Floquet multiplier $\bar{\mu} \approx -2.7747991365286633$.

\begin{figure}
\centering
\begin{subfigure}[b]{0.31\textwidth}
\includegraphics[width=\textwidth]{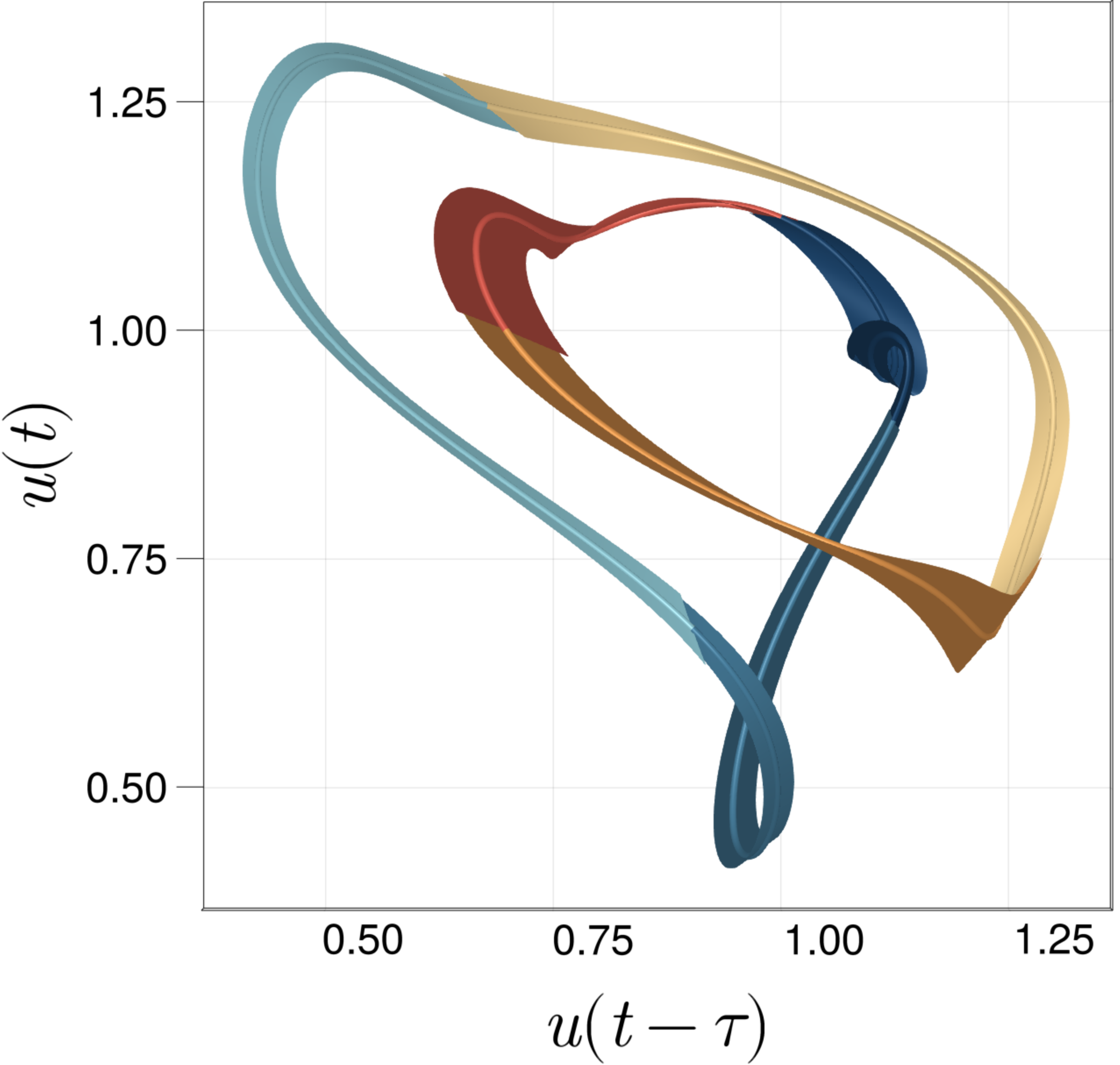}
\caption{}
\end{subfigure}
\hspace{0.03\textwidth}
\begin{subfigure}[b]{0.29\textwidth}
\includegraphics[width=\textwidth]{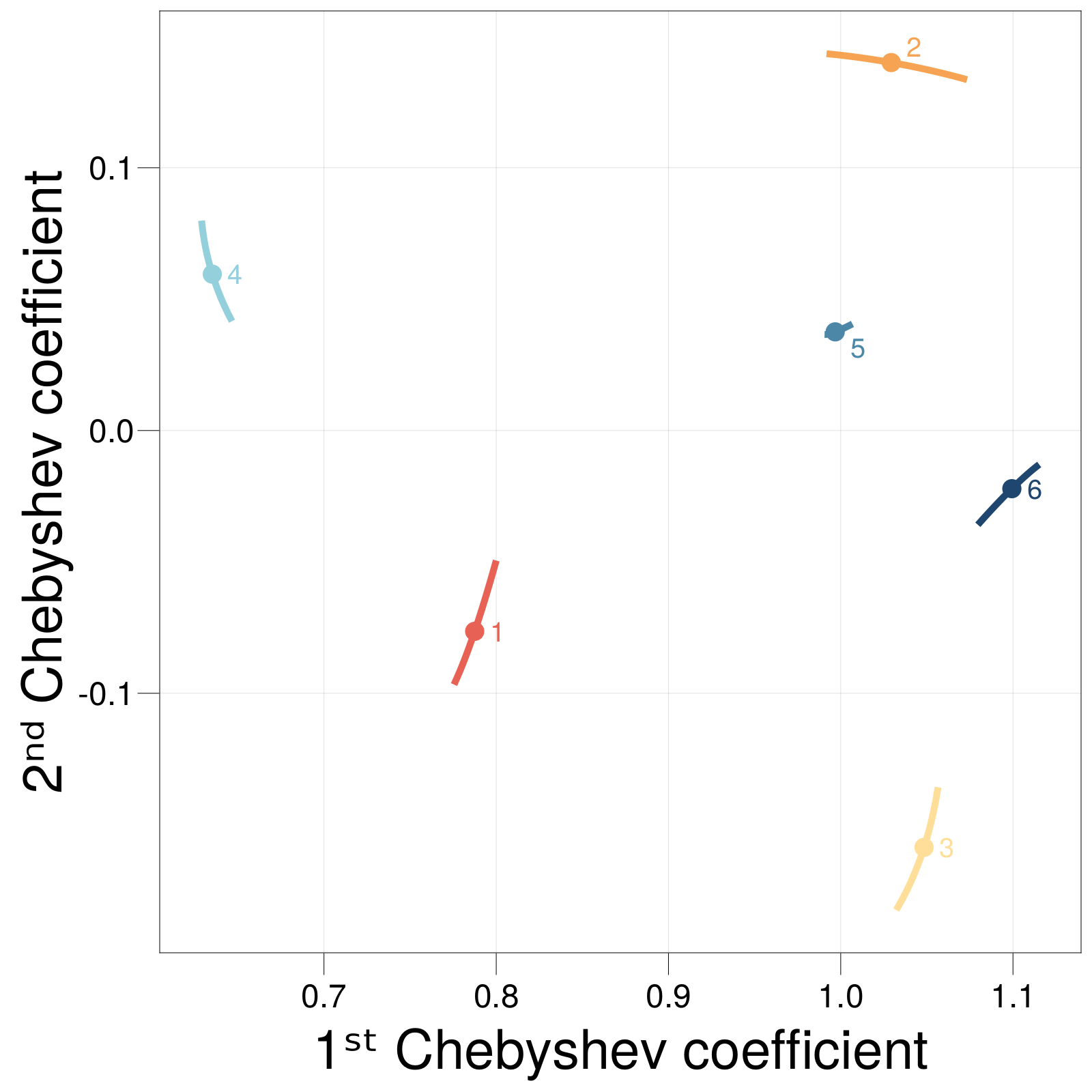}
\caption{}
\end{subfigure}
\hspace{0.03\textwidth}
\begin{subfigure}[b]{0.3\textwidth}
\includegraphics[width=\textwidth]{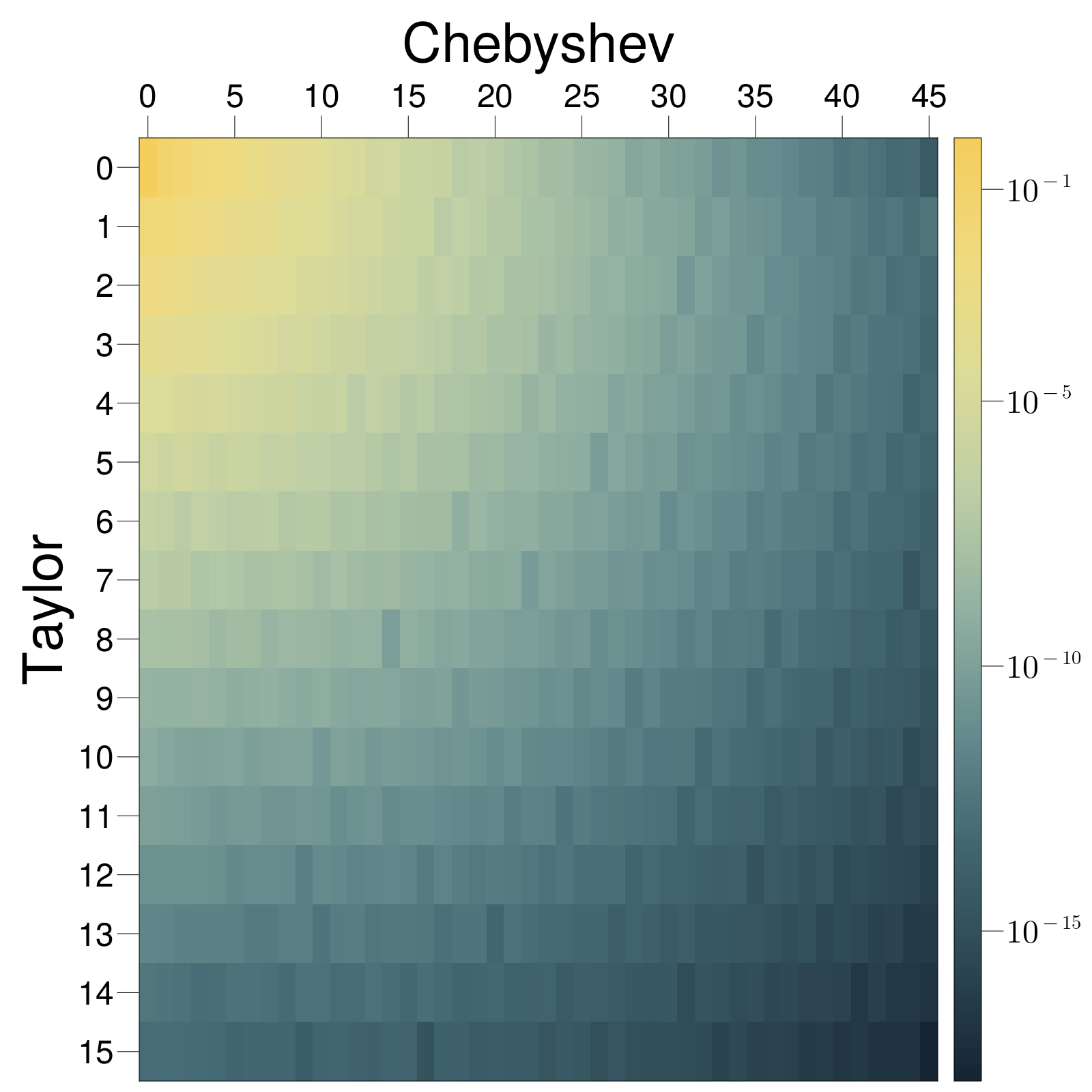}
\caption{}
\end{subfigure}
\caption{(a) Local unstable manifold of the $m\tau$-periodic orbit shown on Figure \ref{fig:mackey_glass_po} for the Mackey-Glass equation.
(b) Representation in \emph{Chebyshev space} of the parameterization of the local unstable manifold shown in (a). The dots correspond to the periodic orbit.
The numbers indicate the labelling of the $m$ pieces; the numbering follows the successive iterations of the time-$\tau$ map.
(c) Average $\{ m^{-1}\sum_{j=1}^m |(\{ \bar{\mathbf{p}}_j \}_{\alpha,\beta})_1| \}_{\alpha,\beta \ge 0}$ of the sequences of Taylor-Chebyshev coefficients of the parameterization of the local unstable manifold shown in (a).}
\label{fig:mackey_glass_param}
\end{figure}

Thus, the unstable manifold is expected to be $1$-dimensional (i.e. $n_\textnormal{u}=1$).
Since $\bar{\mu} < -1$, the unstable manifold is a topological Möbius strip (see also Remark \ref{rem:param_image}).
An approximation of the parameterization of the local unstable manifold is obtained via the recurrence relation \eqref{eq:recurrence_relation_num}.
Each linear system is set on $\boldsymbol{\pi}^N ((\ell^1_\nu)^{1+d})^m \simeq \mathbb{C}^{m(1+d)(N+1)} = \mathbb{C}^{6\times 4\times46} = \mathbb{C}^{1,104}$.
We choose the Taylor truncation order to be $N' = 15$, thus the parameterization has a total of $m (1+d) (N+1) (N'+1) = 6 \times 4 \times 46 \times 16 = 17,664$ Taylor-Chebyshev coefficients; see Figure \ref{fig:mackey_glass_param}.

\begin{figure}
\centering
\begin{subfigure}[b]{0.31\textwidth}
\includegraphics[width=\textwidth]{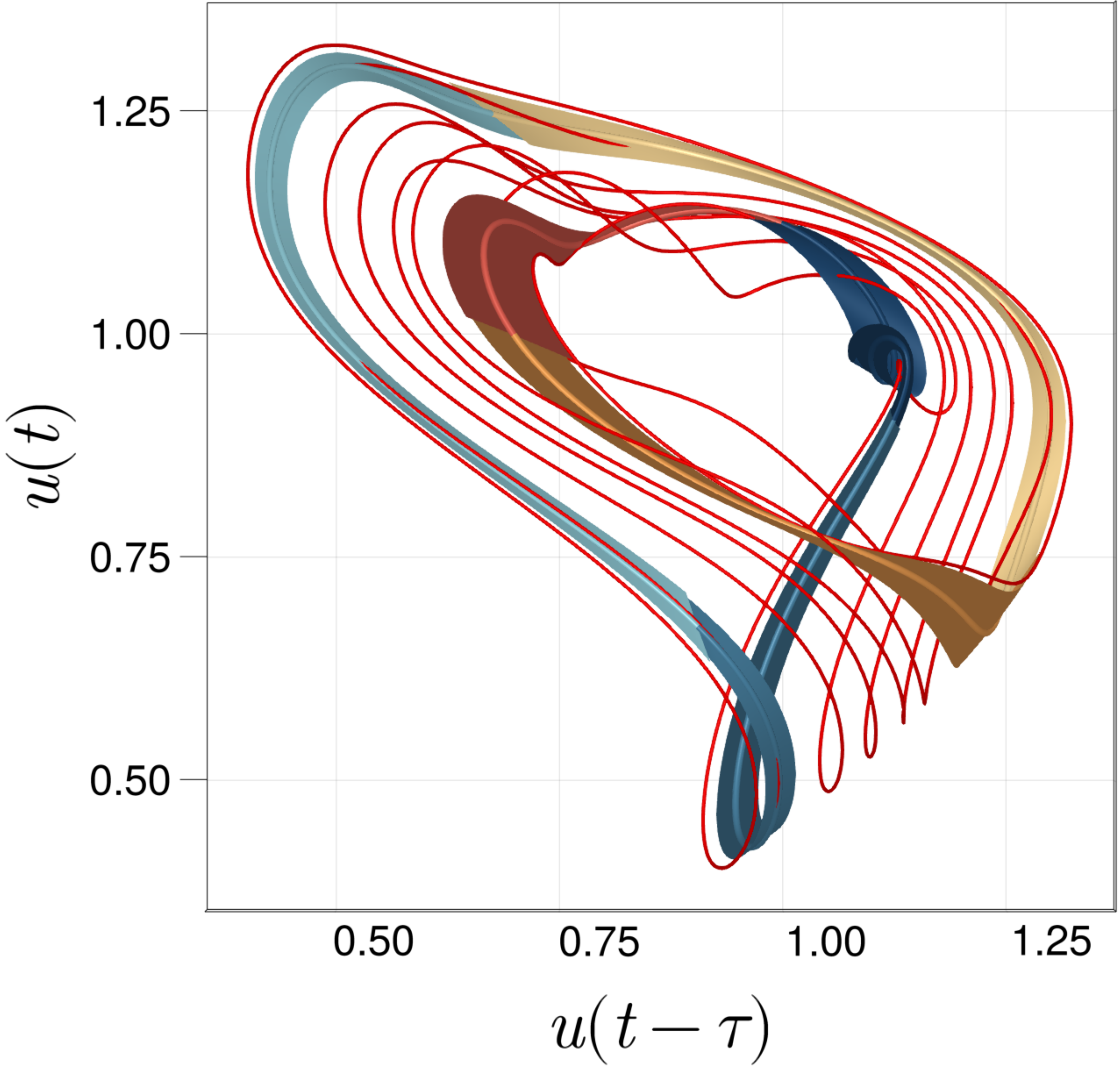}
\caption{}
\end{subfigure}
\hspace{0.03\textwidth}
\begin{subfigure}[b]{0.3\textwidth}
\includegraphics[width=\textwidth]{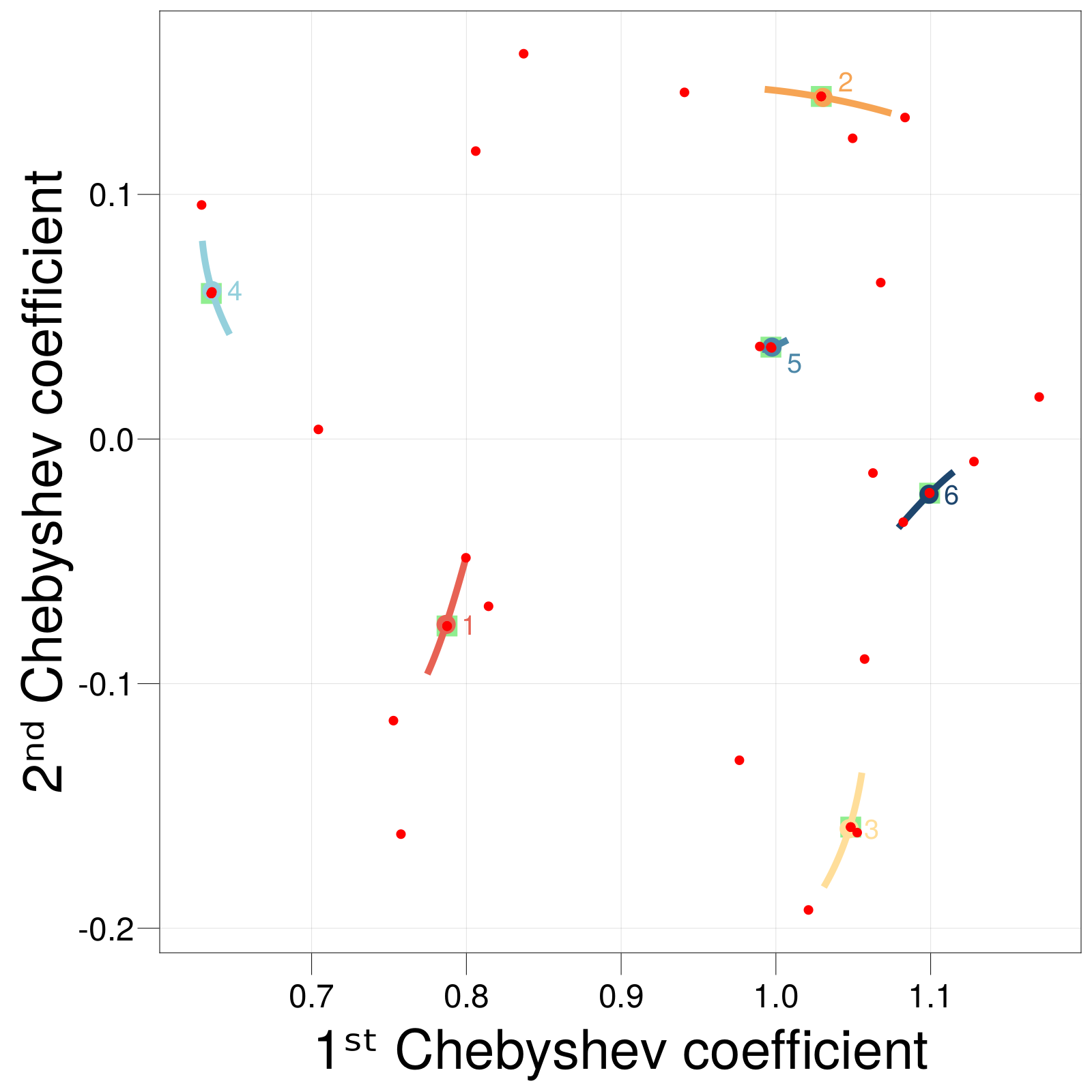}
\caption{}
\end{subfigure}
\hspace{0.03\textwidth}
\begin{subfigure}[b]{0.3\textwidth}
\includegraphics[width=\textwidth]{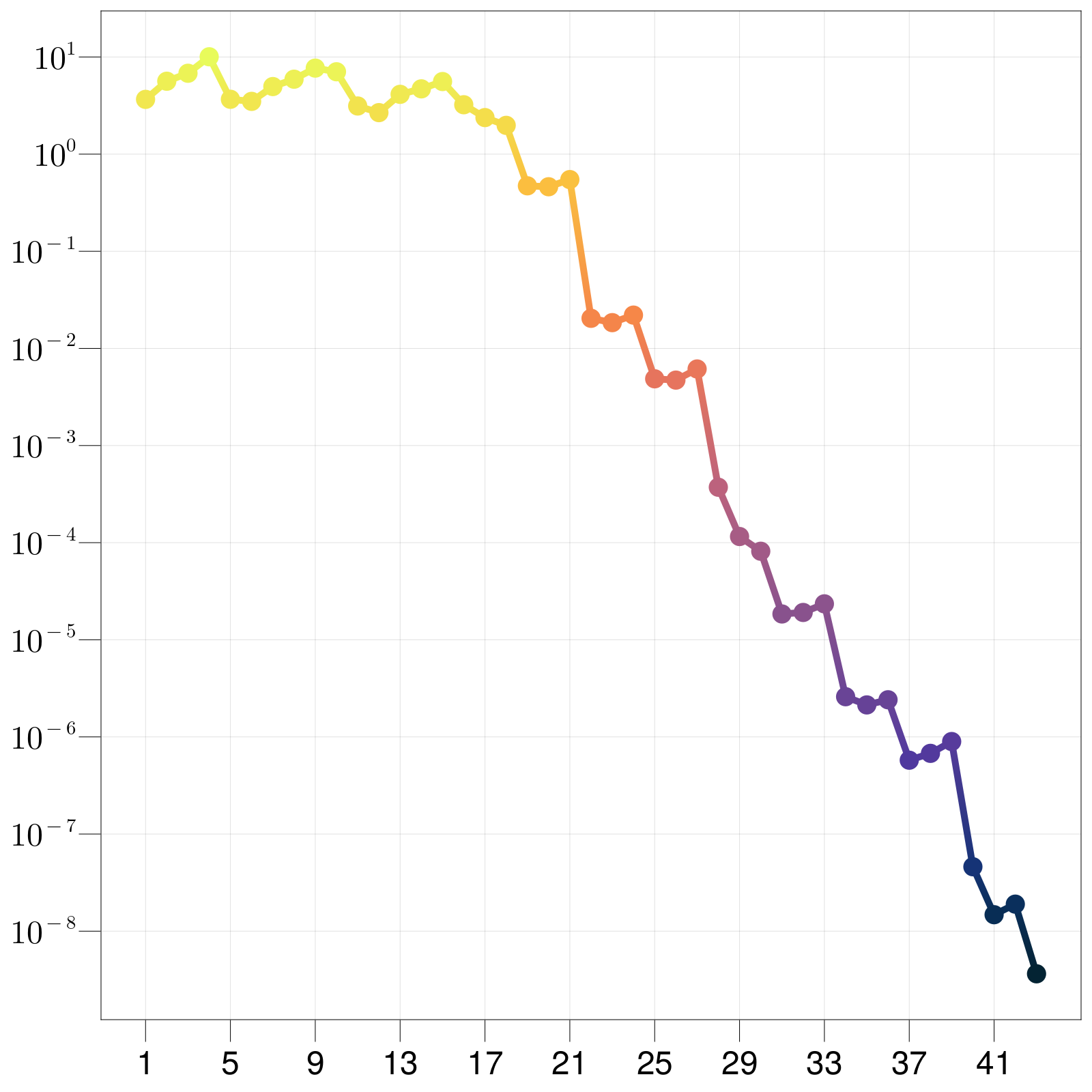}
\caption{}
\end{subfigure}
\caption{(a) Local unstable manifold of the $m\tau$-periodic orbit shown on Figure \ref{fig:mackey_glass_param} and the transverse homoclinic orbit (red line) for the Mackey-Glass equation.
(b) Representation in \emph{Chebyshev space} of the parameterization of the local unstable manifold and the transverse homoclinic orbit (red dots) shown in (a); the connecting orbit starts at the red dot on the edge of the unstable manifold labelled 6.
The green squares represent a phase shift of the $m\tau$-periodic orbit shown on Figure \ref{fig:mackey_glass_po} whose local stable graph is eventually reached by the connecting orbit.
(c) Evolution of the distance of the connecting orbit as a function of the successive iterates.}
\label{fig:mackey_glass_homoclinic}
\end{figure}

By growing the boundary of the local unstable manifold, we find, after $43$ iterations of the time-$\tau$ map, a connecting orbit about $\sim 10^{-5}$ close to the $1$-st piece of a \emph{return periodic orbit} whose phase is, roughly, $1.001792666495276$; hence, we set $j_* = 1$ and $k = 42$ for $\mathbf{F}_\textnormal{co}$ given in \eqref{eq:def_F_co}.
In fact, since in this case the phase of the \emph{return periodic orbit} is close to the initial phase $\delta = 1$, it is not necessary to solve again the zero-finding problem $\mathbf{F}_{\circ, \textnormal{elem}}$ for the \emph{return periodic orbit}. We simply set $\bar{\tau} = \bar{\tau}_\textnormal{init}$ and $\bar{\mathbf{c}} = \bar{\mathbf{c}}_\textnormal{init}$.
Then, we use $\mathbf{H}^N_{j^*} (\bar{\tau}, \bar{\mathbf{c}})$ to get an approximation of the stable eigenspace.

The Newton iterations for the zero-finding problem $\mathbf{F}_{\pitchfork, \textnormal{elem}}$, given in \eqref{eq:zero_finding_problem_connection_poly}, are set on $\mathbb{C} \times \mathbb{C}^d \times \mathbb{C}^{n_\textnormal{u}} \times \boldsymbol{\pi}^{N,N'}((\ell^1(\ell^1_\nu))^{1+d})^m \times \boldsymbol{\pi}^N (\ell^1)^{1+d} \times \boldsymbol{\pi}^N ((\ell^1_\nu)^{1+d})^k \simeq \mathbb{C} \times \mathbb{C}^d \times \mathbb{C}^{n_\textnormal{u}} \times \mathbb{C}^{m (1+d) (N+1) (N'+1)} \times \mathbb{C}^{(1+d) (N+1)} \times \mathbb{C}^{k (1+d) (N+1)} \simeq \mathbb{C}^{25,581}$.
Figure \ref{fig:mackey_glass_homoclinic} shows the transverse homoclinic orbit.
The distance to the \emph{return periodic orbit} after convergence of Newton's method is of order $\sim 10^{-9}$ which is below our threshold $\sim 10^{-8}$.
The gap between the two values of the delay is $|\bar{\tau}_\pitchfork - \bar{\tau}| \approx 1.588373876870719 \times 10^{-11}$.

Note that we ran all the computations in double precision. While Newton's method converged and gave us two approximate zeros for $\mathbf{F}_{\circ, \textnormal{elem}}$ and $\mathbf{F}_{\pitchfork, \textnormal{elem}}$ of order $\sim 10^{-15}$, there is a contribution due to the unfolding parameters $\eta$.
Forcing these to be zero gives us an approximate zero of order $\sim 10^{-13}$.
One could resort to multi-precision to improve this value which would also reduce the gap $|\bar{\tau}_\pitchfork - \bar{\tau}|$.

%%%%%%%%%%%%%%%%
%% REFERENCES %%
%%%%%%%%%%%%%%%%

\bibliographystyle{abbrv}
\bibliography{references}

\end{document}